\documentclass[3p,times]{elsarticle}

\usepackage{amssymb}
\usepackage{amsmath}
\usepackage{amsthm}

\usepackage{multicol}
\usepackage{tikz,tkz-tab}              
\usetikzlibrary{cd}                
\usetikzlibrary{arrows.meta, decorations.pathmorphing}
\usetikzlibrary{calc}
\usetikzlibrary{positioning}
\usepackage{verbatim}
\usepackage{quiver}
\usepackage{cutwin} 
\usepackage{enumerate}
\usepackage{stmaryrd} 
\SetSymbolFont{stmry}{bold}{U}{stmry}{m}{n}

\newtheorem{theorem}{Theorem}[section]
\newtheorem{lemma}[theorem]{Lemma}
\newtheorem{proposition}[theorem]{Proposition}
\newtheorem{corollary}[theorem]{Corollary}

\theoremstyle{definition}
\newtheorem{definition}[theorem]{Definition}
\newtheorem{example}[theorem]{Example}

\theoremstyle{remark}
\newtheorem{remark}[theorem]{Remark}

\numberwithin{equation}{section}

\newcommand{\Q}{\mathbb{Q}}
\newcommand{\GL}{\mathrm{GL}}
\newcommand{\Tr}{\mathrm{Tr}}

\newcommand{\toa}{\overset{a}{\to}}
\newcommand{\toas}{\overset{a}{\to}\mkern-8mu^*}
\newcommand{\tob}{\overset{b}{\to}}
\newcommand{\tobs}{\overset{b}{\to}\mkern-8mu^*}
\newcommand{\eqab}{\overset{ab}{\sim}}

\newcommand{\Sym}{\mathfrak{S}}
\newcommand{\braid}{\boldsymbol{\sigma}}
\newcommand{\orderings}{\mathrm{O}}
\newcommand{\Acyc}{\mathrm{Acyc}}
\newcommand{\Red}{\mathrm{Red}}
\newcommand{\Aut}{\mathrm{Aut}}
\newcommand{\Inn}{\mathrm{Inn}}
\newcommand{\Out}{\mathrm{Out}}
\newcommand{\Gal}{\mathrm{Gal}}

\newcommand{\blue}[1]{\textcolor{blue}{#1}}
\newcommand{\red}[1]{\textcolor{red}{#1}}
\newcommand{\purple}[1]{\textcolor{purple}{#1}}

\newcommand{\domain}{\mathcal{D}}
\newcommand{\orbit}[1]{\mathcal{O}_{#1}}
\newcommand{\C}{\mathcal{C}} 
\newcommand{\cgs}{\mathcal{CGS}} 

\newcommand{\defit}[1]{\emph{#1}}
\newcommand{\pres}[2]{\left\langle #1 \middle\vert #2 \right\rangle}
\newcommand{\R}{\mathbb{R}}
\newcommand{\longline}{\scalebox{2}[1]{$-$}}

\newcommand{\one}{{1\!\!1}} 
\newcounter{intege}
\newcommand{\polygon}[5]{ 
  \foreach \t in {1,...,#3} {
    \coordinate (#2\t) at ($#1+(90-\t*360/#3:#4)$);
  }
  \draw[thin,black,fill=white,opacity=0.3,densely dashed] #1 circle (#4);
  \setcounter{intege}{1}
  \pgfmathsetcounter{intege}{1}
  \foreach \object in {#5}{
    \ifthenelse{\not\equal{\object}{}}{
      \filldraw[black] ($#1+($(90-\theintege*360/#3:#4)$)$) circle(2pt);
    }{
      \filldraw[black] ($#1+($(90-\theintege*360/#3:#4)$)$) circle(2pt);
    }
    \node[inner sep=0pt] at ($#1+($1.12*(90-\theintege*360/#3:#4)$)$) {$\scriptstyle\object$};
    \pgfmathsetcounter{intege}{\theintege+1}
    \setcounter{intege}{\theintege}
  }
}

\journal{Journal of Algebra}

\begin{document}

\begin{frontmatter}



\title{The restricted Hurwitz action on cameral decompositions of Coxeter elements} 

\author{Emir Melliti
} 
\ead{emir.melliti@univ-grenoble-alpes.fr}
\affiliation{organization={Univ. Grenoble Alpes, CNRS, IF}, 
            postcode={38000}, 
            city={Grenoble},
            country={France}}

\begin{abstract}
Given a dual Coxeter system $(W,T)$ of rank $n$, the Hurwitz action of the braid group on the reduced $T$-decompositions of a Coxeter element is transitive. When we choose a geometric representation of $W$, the reflections with respect to the $n$ walls of one given chamber form a generating set $S$ that we call cameral. An ordering of $S$ is called a cameral decomposition and the product a cameral Coxeter element, which depends only on the quiver obtained by orienting the Coxeter diagram with vertices $S$ according to the ordering. We show that the reduced decompositions of a cameral Coxeter elements for which the underlying set $S$ satisfies that $(W,S)$ is a Coxeter system, are necessarily cameral, and we count the number of chambers from which a cameral Coxeter element can be obtained. We show that one can go through all these chambers using the restricted Hurwitz action on the set of cameral decompositions, which is generated by transpositions of adjacent commuting reflections and by the Garside element $\delta$ of the dual Garside structure, which acts by a quiver mutation of the orientation of the Coxeter diagram.
\end{abstract}

\begin{keyword}
reflection groups \sep Coxeter groups \sep Coxeter elements \sep Hurwitz action

\MSC[2020] 20F55 \sep 20F36

\end{keyword}

\end{frontmatter}


\section{Introduction}
    \subsection{Context}
        The dual approach to braid groups was initiated Birman, Ko, and Lee in \cite{birmanNewApproachWord1998}, before it was generalized to spherical Artin-Tits groups by Bessis in \cite{bessisDualBraidMonoid2003a} and independently by Brady and Watt \cite{bradyPartialOrderSymmetric2001, brady$Kpi1$sArtinGroups2002}. Given a spherical Coxeter system $(W,S)$ of rank $n$, the set of reflections $T$, i.e. the closure of $S$ under conjugation, is finite, and one can take it as a new set of generators. This set of generators allows one to define a dual braid monoid, $M'$, which is an alternative braid monoid embedding in the same Artin-Tits group $A$ of spherical type, providing an alternative Garside structure for this group. In the modern language of Garside theory developed by Dehornoy, Digne, Godelle, Krammer and Michel in \cite{dehornoyFoundationsGarsideTheory2015a} this Garside structure is derived from a short right-complemented presentation $\pres{T}{\mathcal{R}'}^+$ of the dual monoid $M'$, that is a presentation where every pair $(s\ne t)\in T^2$ is given exactly one right-common multiple of $T$-length $2$ in $M'$ : $st'= ts'$ with $s',t' \in T$. There are two natural choices for defining these right complements, $s'=tst$ and $t'=t$ or $s'=s$ and $t'=sts$, that corresponds to two relations in $W$, namely $st=t(tst)$ and $ts=s(sts)$. To choose between the two, one can fix a partial order on the set $T$ that depends on the choice of a Coxeter element $c$ in $W$, that is, the product in any order of the elements of a subset $S'\subset T$ for which $(W,S')$ is a Coxeter system in the classical sense (note that there are other non-equivalent definitions of a Coxeter element in the literature,  we clarify this in Section \ref{sec:chamber_coxeter}). These relations are called dual braid relations, and they define the dual braid monoid which embeds in the Artin-Tits group like the classical Artin-Tits monoid, like any Garside monoid. Note that without the Garside structure, i.e. outside spherical type, the classical Artin-Tits monoid embeds in the Artin-Tits group \cite{parisArtinMonoidsInject2002}. The dual monoid can be defined in general type with an infinite $T$, but we don't know if it embeds in the Artin-Tits group. In classical Coxeter theory, adding the relations $s^2=1$ for all $s\in S$ surjects the Artin-Tits monoid onto the Coxeter group $W$, and the Garside element $\Delta$ is sent to the longest element in $W$. In the dual theory, adding the relation $t^2=1$ for all $t \in T$ also surjects the dual braid monoid onto $W$, but the Garside element is sent to the chosen Coxeter element $c$.
        \[\begin{tikzcd}
            & A & \\
            {\pres{W}{S}^+} && {\pres{W}{T}^+} \\
            & W
            \arrow[hook, from=2-1, to=1-2]
            \arrow["{s^2=1}"'{pos=0.2}, two heads, from=2-1, to=3-2]
            \arrow[hook, from=2-3, to=1-2]
            \arrow["{t^2=1}"{pos=0.2}, from=2-3, to=3-2]
        \end{tikzcd}\]
        \begin{example}
            We illustrate the dual setup in type $A_2$. Let $S = {s, t}$. The monoid presentation $\pres{S}{sts=tst}^+$ defines the Artin-Tits monoid, which embeds in the Artin-Tits group with presentation $\pres{S}{sts=tst}$. The monoid has a Garside structure with Garside element $\Delta = sts=tst$. Adding the relations $s^2=1$ and $t^2=1$ gives the Coxeter group of type $A_2$, which is the symmetric group $\Sym_3$. The set of reflections is $T = \{s, t, u=sts\}$, and the dual braid monoid is defined by the presentation $\pres{T}{st=tu=us}^+$. It is also a Garside monoid with Garside element $\Delta = st=tu=us$, which is sent to the rotation $st$ in the Coxeter group. 
            \[
            \begin{tikzpicture}[
                scale=1.5,
                every node/.style={font=\small}
            ]

            \def\R{1}
            \def\ext{1.15} 

            \foreach \i in {0,...,5} {
                \coordinate (P\i) at (\i*60:\R);
            }

            \draw[thick] (P0) -- (P1) -- (P2) -- (P3) -- (P4) -- (P5) -- cycle;
            \foreach \i in {0,...,5} {
                \fill (P\i) circle (1.5pt);
            }

            \draw[thick] ($(P3)!\ext!(P0)$) -- ($(P0)!\ext!(P3)$);
            \draw[thick] ($(P4)!\ext!(P1)$) -- ($(P1)!\ext!(P4)$);
            \draw[thick] ($(P5)!\ext!(P2)$) -- ($(P2)!\ext!(P5)$);

            \node at ($(P0)!0.25!(P3)$) [above] {$s$};
            \node at ($(P1)!0.25!(P4)$) [left] {$t$};
            \node at ($(P2)!0.25!(P5)$) [below left] {$u$};

            \draw[-{Latex[length=3mm]}, thick, blue]
                (0:\R*\ext) arc (0:120:\R*\ext);
            \node[blue] at (90:\R*\ext+0.1) {$c=st$};

            \end{tikzpicture}
            \]
        \end{example}
        One fundamental combinatorial property of Coxeter systems is the so-called Matsumoto's lemma : one can pass from any reduced $S$-decomposition of an element to any other one by successive applications of braid relations only (i.e. without the relations $s^2=1$). The same question for reduced $T$-decompositions and dual braid relations leads to investigate the Hurwitz action of the braid group on reduced decompositions, introduced in 1891 by Hurwitz \cite{hurwitzUeberRiemannscheFlachen1891}. The crossing $\braid_i$ acts by sending the decomposition $t_1\ldots t_i t_{i+1} \ldots t_k$ to $t_1 \ldots t_{i-1} t_{i+1} (t_{i+1}^{-1}t_i t_{i+1}) t_{i+2} \ldots t_k$, swapping the $i$-th and $(i+1)$-th factors and conjugating the $i$-th factor by the $(i+1)$-th. For the Matsumoto property to hold, the Hurwitz action on reduced $T$-decompositions of an element $w$ should be transitive. It has been known that the transitivity holds when $w$ is a parabolic Coxeter element (note that there are also multiple definitions for these in the literature) and there are some generalizations to some non-spherical types (see \cite{baumeisterNoteTransitiveHurwitz2014}). A necessary and sufficient condition for the Hurwitz action to be transitive is given in \cite{baumeisterHurwitzActionFinite2017}.\par
        Another way of studying Coxeter groups is through its linear representations (see \cite{bourbakiGroupesAlgebresLie2007}). The reflections of $T$ act by orthogonal reflection with respect to a corresponding set of hyperplanes, that delimit \emph{chambers}. When we take the set $S'$ of the $n$ reflections with respect to the walls of a chamber, the pair $(W,S')$ is a Coxeter system, and the product in any order of these reflections is a Coxeter element. Then some questions arise from this way of constructing Coxeter elements.\begin{enumerate}
            \item Can any Coxeter element be obtained this way ?
            \item\label{question:how_many} From how many chambers can a Coxeter element be obtained ?
            \item\label{question:hur_transitive} Can we use the Hurwitz action to go through all these chambers ?
        \end{enumerate}
        The third question is imprecise on purpose, and one goal is to precise it in an interesting way. This paper investigates the combinatorial interactions between the notion of chamber and the Hurwitz action on reduced $T$-decompositions of Coxeter elements with these questions as guidelines.
    \subsection{Results}
        Classical results of Coxeter theory (see e.g. \cite{bourbakiGroupesAlgebresLie2007}) state that two Coxeter elements defined as the product in two different orders of the same set $S'$ are conjugated, and that all sets $S'$ corresponding to the walls of a chamber are also conjugated. We call these generating sets \emph{cameral generating sets} (\emph{cam-sets}), they have the property of being \emph{Coxeter generating sets}, i.e. $(W, S')$ is a Coxeter system. The product in any order of the reflections of a cameral generating set is called a \emph{cameral Coxeter element}, and they form a whole conjugacy class in $W$. It has been pointed out (see for example \cite{reinerNonconjugateCoxeterElements2015}) that there might be multiple conjugacy class of Coxeter elements, thus some Coxeter elements cannot be obtained from cameral generating sets. There is only one conjugacy class of Coxeter element if and only if $W$ is a Weyl group (this includes type $A$). This gives rise to some inverse question : can a cameral Coxeter element be written as the product of the reflections of a Coxeter generating set $S$ that is not cameral ? Our first result is that the answer is no.
        \begin{proposition}[Proposition \ref{prop:cam_cox_cam_dec}]
            Let $W \subset \GL(V)$ be a finite geometric reflection group, and $T$ the set of reflections, and let $c$ be a cameral Coxeter element. Then, all Coxeter decompositions of $c$ are cameral decompositions.
        \end{proposition}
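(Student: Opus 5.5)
The plan is to translate the statement into a question about root systems and Gram matrices. Let $S'=\{s_1,\dots,s_n\}\subset T$ be a Coxeter generating set with $s_1\cdots s_n=c$, and choose a root $\alpha_i$ for each $s_i$. I will use one criterion throughout: $S'$ is cameral as soon as the $\alpha_i$ can be re-signed so that they are linearly independent with pairwise non-positive inner products. Indeed, in that case the cone $\{x : (x,\alpha_i)\ge 0\}$ is a fundamental domain for the group generated by the $s_i$, by the classical theory of Bourbaki. That group is $W$, so the cone is a chamber of $W$, and its walls are exactly the hyperplanes of the $s_i$.

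First, linear independence. Since $c$ is cameral, it has no eigenvalue $1$. The product of reflections fixes the intersection of their hyperplanes, which is $\langle\alpha_1,\dots,\alpha_n\rangle^\perp$, so this orthogonal complement is zero. Hence the $n$ roots span $V$ with $n=\dim V$, and they form a basis.

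Second, the local geometry. Because $(W,S')$ is a Coxeter system, the order $m_{ij}$ of $s_is_j$ is the Coxeter matrix entry. The dihedral group $\langle s_i,s_j\rangle$ is a rank-2 reflection group acting on $\langle\alpha_i,\alpha_j\rangle$, so the angle between the two lines is $k_{ij}\pi/m_{ij}$ with $\gcd(k_{ij},m_{ij})=1$. Thus
\[
(\alpha_i,\alpha_j)=\pm\cos(k_{ij}\pi/m_{ij}).
\]
The Coxeter graph of $(W,S')$ is a forest, since it has the same type as $W$. Therefore the signs of the $\alpha_i$ can be flipped one vertex at a time along each tree, making every edge entry $-|\cos(k_{ij}\pi/m_{ij})|$. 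Commuting pairs are orthogonal, so they impose no condition. It remains to show $|\cos(k_{ij}\pi/m_{ij})|=\cos(\pi/m_{ij})$, i.e. $k_{ij}\equiv\pm1 \pmod{m_{ij}}$.

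By reduction to irreducible components, it suffices to treat irreducible $W$. For $m_{ij}\in\{2,3,4,6\}$ the only units modulo $m_{ij}$ are $\pm1$, so the condition is automatic. This covers all Weyl groups, and after re-signing the criterion above applies.

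The main obstacle is the non-crystallographic types $H_3$, $H_4$ and $I_2(m)$. There a label $5$, or $m$ in general, admits the "star" angle $2\pi/5$, or $k\pi/m$ with $k\ne\pm1$. For these I will use the conjugacy class of $c$. Write the Gram matrix $G=(\alpha_i,\alpha_j)$, normalised with diagonal $1$, as $G=U+U^{T}$ with $U$ upper unitriangular. Then the classical formula $c=-U^{-1}U^{T}$ (Coxeter/Howlett) shows that the characteristic polynomial of $s_1\cdots s_n$ depends only on $G$. Moreover, since the diagram is a forest, it depends only on the edge weights and not on the ordering.

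If some $k_{ij}\ne\pm1$, the cosine entries are replaced by Galois conjugates. For $I_2(m)$ the resulting rotation has angle $2k\pi/m\ne\pm2\pi/m$. For $H_3$ and $H_4$ one computes the characteristic polynomial of $c$ for each admissible assignment of $\cos(\pi/5)$ versus $\cos(2\pi/5)$ on the label-$5$ edge. These are finitely many cases, and a non-positive-definite $G$ is excluded outright since $W$ acts orthogonally. The goal is to check that the eigenvalue $e^{2\pi i/h}$ of a cameral Coxeter element is absent in every case except the genuine cameral one. This contradicts the assumption that $c$ is conjugate to a cameral Coxeter element, so $k_{ij}=\pm1$ everywhere, and the criterion finishes the proof.
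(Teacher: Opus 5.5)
Your proposal is correct in substance, but it takes a genuinely different route from the paper. The paper never looks at roots or Gram matrices. It proves that the map $f$ from $\cgs(W,T)/\Inn(W)$ to conjugacy classes of Coxeter elements is injective, using three ingredients: the structure $\bar A=\bar N\rtimes\Gamma$ of $T$-preserving outer automorphisms from \cite{marinAutomorphismsComplexReflection2007}; Lemma~\ref{lem:N_acts_trivially}, where $\bar N$ maps chambers to chambers and hence acts trivially; and the freeness of the $\Gamma$-action on classes of Coxeter elements from \cite{reinerNonconjugateCoxeterElements2015}. The cameral class of Coxeter elements then has only the cameral class of generating sets as preimage.

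You instead argue directly with roots:
\begin{itemize}
\item the roots of $S'$ form a basis;
\item after re-signing along the forest, their Gram matrix has edge entries $-|\cos(k_{ij}\pi/m_{ij})|$;
\item labels in $\{2,3,4,6\}$ force $k_{ij}\equiv\pm1$;
\item in types $I_2(m)$, $H_3$, $H_4$ the only alternative is a Galois-twisted Gram matrix, for which $c$ loses the eigenvalue $e^{2\pi i/h}$.
\end{itemize}
Your route is elementary: it uses Bourbaki's geometric representation, the classification, and a finite spectral check. That check is in effect a hands-on instance of the freeness theorem, with the Gram-matrix analysis replacing the automorphism-group analysis. It also yields a concrete geometric criterion: a Coxeter generating set is cameral exactly when non-commuting pairs have hyperplanes meeting at angle $\pi/m_{ij}$. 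In particular, every Coxeter generating set of a Weyl group is cameral. The paper's route is shorter given its citations, keeps case analysis out of the argument itself, and gives the stronger statement that $f$ is a bijection.

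Several statements need repair.
\begin{itemize}
\item The criterion you announce at the start is false as stated. Linear independence with non-positive inner products does not make the cone a fundamental domain; Bourbaki's theorem needs the Gram entries to be exactly $-\cos(\pi/m_{ij})$. Example~\ref{ex:H2} is a counterexample: the roots of $\{s,tst\}$ can be re-signed to have inner product $-\cos(2\pi/5)<0$, yet this set is not cameral (the cone is a union of two chambers). Since you go on to prove $k_{ij}\equiv\pm1$, you only ever apply the correct version, so state it that way.
\item The claim that $c$ has no eigenvalue $1$ presumes $V$ essential. It is simpler to note that $\bigcap_i K(s_i)=V^W$ because $S'$ generates $W$. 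This space has codimension $n$, so the $n$ roots are independent, with no use of $c$.
\item The $H_3$/$H_4$ verification, which you only announce, needs no computation. Up to re-signing, the alternative Gram matrix is the image of the cameral one under $\sqrt5\mapsto-\sqrt5$. So the spectrum of $c$ becomes $\{\zeta_h^{jm_i}\}$ with $j\equiv\pm2 \pmod 5$. With exponents $1,5,9$ ($h=10$), respectively $1,11,19,29$ ($h=30$), the value $\zeta_h^{\pm1}$ never occurs.
\item With diagonal $1$ the matrix $U$ has diagonal $\tfrac12$; use $2G$ if you want it unitriangular.
\end{itemize}
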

        In order to count the number of chambers with which a cameral Coxeter element can be obtained, we use a simple trick to reduce the work to counting the number of distinct Coxeter elements that can be written from the reflections on the walls of a fixed chamber, which are known to correspond to the acyclic orientations of the Coxeter diagram of which there are $2^{n-1}$ in irreducible type. This gives the following result :
        \begin{corollary}[Corollary \ref{coro:numbers}]
            Let $W$ be a finite irreducible geometric reflection group. For every cam-set $S$, there are exactly $2^{n-1}$ simple Coxeter elements for $S$. For every Coxeter element $c$, there are exactly $h\times 2^{n-2}$ cam-sets for which $c$ is simple, and thus there are $h\times 2^{n-1}$ chambers from which we can obtain $c$.
        \end{corollary}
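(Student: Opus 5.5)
We prove the three counts in the order stated, using the classical facts recalled above, the known bijection between Coxeter elements of a fixed chamber and acyclic orientations of its Coxeter diagram, and the Proposition. In this context ``$c$ is simple for $S$'' means that $c$ is a product of the elements of $S$ in some order.

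\textbf{First count.} Fix a cam-set $S$. The simple Coxeter elements for $S$ are the products $s_{\pi(1)}\cdots s_{\pi(n)}$ over orderings $\pi$ of $S$. Two orderings give the same product exactly when they induce the same orientation of the Coxeter diagram $\Gamma_S$: sufficiency is the commutation of non-adjacent generators, and necessity is the classical injectivity result. Hence the simple elements are in bijection with the acyclic orientations of $\Gamma_S$. Since $W$ is finite and irreducible, $\Gamma_S$ is a tree with $n-1$ edges, so every orientation is acyclic and there are $2^{n-1}$ of them.

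\textbf{Second count: double counting.} Let $X$ be the set of pairs $(S,c)$ with $S$ a cam-set and $c$ simple for $S$. All cam-sets are conjugate, so $W$ acts transitively on cam-sets. The stabilizer of $S$ is trivial modulo the diagram automorphisms that preserve the chamber, and the cleaner way to organize this is to count chambers rather than cam-sets. Each cam-set is the set of wall reflections of exactly two chambers, $C$ and $-C$, since $w_0$ composed with a diagram automorphism sends $C$ to $-C$ while preserving the set of walls; a chamber, in turn, is determined by its walls up to this sign. The chambers are in bijection with $W$, so there are $|W|$ chambers and $|W|/2$ cam-sets.

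All cameral Coxeter elements form a single conjugacy class $\mathcal{K}$, and conjugation by $W$ preserves the relation ``$c$ is simple for $S$''. Therefore the number $N$ of cam-sets for which a given $c\in\mathcal{K}$ is simple does not depend on $c$. Counting $X$ in two ways gives
\[ \frac{|W|}{2}\cdot 2^{n-1} = |\mathcal{K}|\cdot N . \]
The centralizer of a Coxeter element in a finite irreducible reflection group is the cyclic group $\langle c\rangle$ of order $h$, a classical result of Springer theory since $c$ is regular. Hence $|\mathcal{K}| = |W|/h$, and $N = h\cdot 2^{n-2}$. Multiplying by the two chambers per cam-set gives $h\cdot 2^{n-1}$ chambers.

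\textbf{Role of the Proposition.} The Proposition guarantees that counting cam-sets captures every Coxeter decomposition of $c$. Without it, some decompositions of $c$ could use non-cameral Coxeter generating sets, and the corollary would not describe all of them.

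\textbf{Main obstacle.} The delicate point is the set-up of the double counting. One must check three things: that each cam-set has exactly two chambers, in every finite type including non-crystallographic ones; that the centralizer fact applies to cameral Coxeter elements of a general geometric reflection group, not only Weyl groups; and that distinct orientations give distinct elements. For the first I would use that the stabilizer of the wall set of $C$ is $\{1,w_0\}$ composed with diagram automorphisms, together with the fact that the cone of $C$ is determined up to sign by its walls. For the second I would cite Springer's theorem, since a cameral Coxeter element has a primitive $h$-th root of unity as a regular eigenvalue.
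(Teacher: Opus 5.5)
Your proposal is correct and follows essentially the same route as the paper. You get $2^{n-1}$ from the Shi/Eriksson bijection with orientations of the tree $\Gamma_S$, then double count the pairs (the nonzero entries of the weighted adjacency matrix) using $|W|/2$ cam-sets and $|\mathcal{C}^c(W)|=|W|/h$; the paper justifies the centralizer fact via Carter's argument where you use Springer's regular-element theory. One small correction: the stabilizer of a cam-set $S(C)$ under $W$-conjugation is exactly $\{1,w_0\}$, because $w_0C=-C$ and $S(-C)=S(C)$, and this is the clean way to obtain $|W|/2$ rather than anything ``modulo diagram automorphisms''.
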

        In the first question, we were only interested in Coxeter generating sets, but the Hurwitz action on $T$ decomposition can produce a $T$-decomposition of a Coxeter element on a set $S'$ that is not a Coxeter generating set (see Example \ref{exa:hurwitz_preserv_no_cox}). Thus, we need to restrict the Hurwitz action so the support of the produced $T$-decompositions are Coxeter generating sets. We will use a notion of \emph{partial group action} that we take from \cite{kellendonkPartialActionsGroups2004}. A group $G$ (here, $B_n$) acts partially on a set $X$ (here $\Red^c_T(c)$, the set of reduced $T$-decompositions of a cameral Coxeter element $c$ whose underlying set is a cameral generating set) when the action $G\times X \to X$ is a partial map with some composition axioms. Simply restricting a transitive group action to a subset gives a transitive sub-partial action so, this is not a very interesting way to precise the third question. Another natural way of defining a sub-partial action from a group action is by a generating set of allowed pairs $(g,x)$ on which the partial action is defined. Here, we take on one hand the pairs $(\sigma_i,u)$ where $u=(t_1\ldots t_i, t_{i+1} \ldots t_n)$ where $t_i$ and $t_{i+1}$ commutes. These do not change the underlying set of the decomposition, thus it stays in the same chamber. On the other hand, we take all pairs $(\C,u)$ where $\C = \braid_1 \cdots \braid_{n-1}$. Starting with $u=(t_1,\ldots t_n)$ a cameral decomposition, this conjugates all the reflections by $t_n$ and swaps $t_n$ to the first position. This changes the chamber by going through the wall which corresponds to the kernel of $t_n$, and it changes the orientation of the Coxeter diagram by a quiver mutation. Then, the third question can be asked in the following sense. Is the sub-partial action $\rho$ of the Hurwitz action generated by these two types of pairs transitive ?
        The answer is yes, and given by our second result :
        \begin{theorem}[Theorem \ref{thm:always_eqab}]
            Let $W$ be a finite irreducible geometric reflection group of rank $n$. Let $c_0$ be a cameral Coxeter element, and let $u, v$ be two cameral decompositions in $\Red_{T}^c(c_0)$. Then $u \eqab v$.
        \end{theorem}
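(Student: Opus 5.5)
We reduce the statement to two independent facts. The first is that the two generators of $\rho$ let us reach every chamber from which $c_0$ can be obtained. The second is that, inside a fixed chamber, they connect all decompositions of $c_0$ on that cam-set.

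Write $u=(t_1,\dots,t_n)$ with underlying cam-set $S_u$ and acyclic orientation $Q_u$ of the Coxeter diagram. Commuting transpositions $\braid_i$ replace $u$ by another linear extension of $Q_u$. Since linear extensions of an acyclic quiver are connected by swaps of adjacent incomparable letters, and incomparable vertices are non-adjacent in the diagram and hence commute, any two cameral decompositions with the same cam-set and the same quiver are $\eqab$-equivalent. Moreover, the product depends only on $(S_u,Q_u)$. So a cameral decomposition of $c_0$ up to $\eqab$ is a pair (cam-set $S$, acyclic orientation $Q$ on $S$) whose product is $c_0$. By Corollary \ref{coro:numbers}, there are $h\,2^{n-2}$ cam-sets for which $c_0$ is simple. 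For each such $S$ the orientation $Q$ is unique, because distinct acyclic orientations give distinct Coxeter elements (the count of $2^{n-1}$ simple elements). Hence the $\eqab$-classes are indexed by those $h\,2^{n-2}$ cam-sets, and it remains to show that the action of $\C$, combined with commuting swaps, acts transitively on them.

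Next, $\C$ applied to $(t_1,\dots,t_n)$ yields $(t_n, t_nt_1t_n,\dots,t_nt_{n-1}t_n)$. Its support is $t_nS_ut_n$, the cam-set of the chamber adjacent through the wall $\ker(t_n-1)$ (after rewriting in terms of the chamber). Any sink of $Q_u$ can be moved to the last position by commuting swaps. Therefore from the cam-set $S$ we reach $sSs$ for every sink $s$ of the orientation attached to $c_0$, the orientation changing by the mutation "sink becomes source". Along these moves, $c_0$ is conjugated by $\C$ to itself, since the Hurwitz action preserves the product.

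The main step is transitivity of these mutations on the set of cam-sets for $c_0$. I would argue via the known structure of Coxeter elements. Mutating at sinks repeatedly, the sink-source flips generate all acyclic orientations of a tree (the Coxeter diagram of a finite irreducible group is a tree), and a full round of flips through all vertices in order realizes $\C^n$. On the level of chambers, a full round sends the cam-set $S$ to $c_0^{-1}Sc_0$ or to $c_0Sc_0^{-1}$. So the reachable cam-sets contain the $\langle c_0\rangle$-orbit of $S$ and all "intermediate" cam-sets $t_k\cdots t_nS t_n\cdots t_k$.

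To conclude, I would show that every cam-set for which $c_0$ is simple has this form. Let $S'=wSw^{-1}$ be such a cam-set, and consider the action of $c_0$ on the set of chambers whose wall-reflections yield $c_0$. The simplest route is counting. The orbit of $S$ under $\langle c_0\rangle$ has size $h$ up to the standard central/bipartite subtlety: $c_0^{h/2}=w_0$ may stabilize $S$ when $w_0$ is central, giving size $h/2$. Each full round passes through $n$ intermediate cam-sets, and the orientation of $c_0$ on these is distinct. Counting orbits of $(\text{cam-set},\text{orientation})$ under mutation, I expect exactly $h\,2^{n-2}$ reachable cam-sets. This counting is the obstacle. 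It needs the classical fact that sink-source mutation orbits of orientations of a tree, together with the $c_0$-rotation, cover the set with no collapsing beyond the central $w_0$. I would prove it by matching with the counting argument of Corollary \ref{coro:numbers}: the factor $h$ comes from the $c_0$-orbit, and the factor $2^{n-2}$ from orientations modulo a global flip. The reachable set then equals the full set, which gives $u\eqab v$.
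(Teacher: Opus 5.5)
Your reduction is sound. Commuting swaps connect all linear extensions of a fixed orientation. Shi's bijection makes the orientation attached to a cam-set for $c_0$ unique. Pushing a sink to the end and applying $\C$ realizes a sink mutation together with a wall-crossing.

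\textbf{The gap.} The step that carries the whole theorem is that these mutations reach all $h\,2^{n-2}$ cam-sets for which $c_0$ is simple, and you only say you expect the count. Moreover, the family you actually exhibit cannot do the job. The $\langle c_0\rangle$-conjugates of $S$ and of the intermediate cam-sets of a round starting at $u$ are supports of the decompositions $\C^j.u$. Since $\C^n$ acts by conjugation by $c_0^{\pm1}$, $\C^{nh}$ acts trivially, so this family has at most $nh$ members. But $nh<h\,2^{n-2}$ as soon as $n\ge 5$ (in type $A_5$, $30<48$). So your claim that ``every cam-set for which $c_0$ is simple has this form'' is false in general. You need arbitrary choices of sinks, and then a way to locate the resulting chambers. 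Your heuristic count does not supply this, for two reasons:
\begin{itemize}
\item In your parametrization the orientation of the moving cam-set is unique, so no factor $2^{n-1}$ can come from it.
\item The two chambers sharing a cam-set are not related by reversing all arrows. They are related by $w\mapsto ww_0$ together with the diagram automorphism induced by $w_0$. In type $B_n$, for instance, that automorphism is trivial.
\end{itemize}

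\textbf{The missing idea.} Record every decomposition relative to the fixed reference chamber, $u=\phi(w,\sigma)=({}^ws_{\sigma(1)},\dots,{}^ws_{\sigma(n)})$.
\begin{itemize}
\item A commuting swap changes $\sigma$ by an adjacent transposition of commuting elements of $S_0$ and fixes $w$.
\item $\C$ changes $\sigma$ by the $n$-cycle and multiplies $w$ by the single generator $s_{\sigma(n)}$.
\end{itemize}
Forgetting $w$, these are exactly the moves of Lemma~\ref{lem:partial_action_tree} on orderings of the fixed tree $\Gamma_0$. Hence every $\sigma\in\Sym_n$ occurs in the orbit $\Omega$ of $u_0$. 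Your remark that sink flips connect all orientations of a tree would serve equally well here, but only applied to the orientation of $\Gamma_0$ pulled back by $w$, not to the orientation of the moving cam-set.

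Next, $\C^n$ sends $\phi(w,\sigma)$ to $\phi(c_0^{\pm1}w,\sigma)$, so at least $h$ values of $w$ lie above each $\sigma$. Since the fibers of $\phi$ have size $2$, this gives $|\Omega|\ge n!\,h/2=|\Red^c_T(c_0)|$. The last equality comes from double counting with $|\C^c(W)|=|W|/h$.

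Equivalently, without the final count: the $w$ with ${}^w(s_{\sigma(1)}\cdots s_{\sigma(n)})=c_0$ form a single coset $\langle c_0\rangle w$, because the centralizer of a Coxeter element is cyclic of order $h$. Your sketch never uses this centralizer fact, or the count it yields. Without it, knowing that every orientation and the $\langle c_0\rangle$-orbit are reached does not rule out further chambers lying over the same orientation.
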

        Here, $\eqab$ is the equivalence relation associated with the partial Hurwitz action $u \eqab v \Leftrightarrow \exists g, \rho(g,u)=v$.\par
        We obtain this result by counting the size of the orbit of one decomposition under $\rho$ and showing that it matches the number of cameral decompositions of one cameral Coxeter element and is equal to $n! \times h/2$ (where $h$ is the order of a Coxeter element, also called \emph{Coxeter number}).\par
        We start in Section \ref{sec:coxeter_groups} with the definition and basic properties of Coxeter groups. Then we introduce the notions of Coxeter elements and chambers in Section \ref{sec:chamber_coxeter}, by carefully describing and comparing the different definitions that appear in the literature. We define the Hurwitz action in Section 4, and we add in Section 5 some background on partial actions of groups to get a better way of stating our result. In Section 6, we give a positive answer to the three questions in type $A_n$ by investigating in details the action of the symmetric group $W =\Sym_{n+1}$ on $\{1,\ldots,n+1\}$. Finally, in Section 7, we prove the same results in general type.

\section{Artin-Tits, Coxeter groups}\label{sec:coxeter_groups}
    Artin-Tits monoids are defined by their presentation in terms of generators and relations, based on the entry of a Coxeter matrix, often represented as a Coxeter graph. Here, $n$ will denote the number of generators.
    \begin{definition}
        A Coxeter matrix on a set $S$ is a matrix $M$ with integer entries $m_{s,t}$ for $s,t \in S$, such that $m_{s,s} = 1$ for all $s \in S$ and $m_{s,t} = m_{t,s} \geq 2$ for all $s \neq t$ in $S$.
    \end{definition}
    \begin{example}
    Let $n=3$. Here is a Coxeter matrix and the associated Coxeter graph:\\
        \begin{minipage}{0.5\textwidth}
            \[
            \mathcal{M} = \begin{pmatrix}
                1 & 3 & 2 \\
                3 & 1 & 4 \\
                2 & 4 & 1
            \end{pmatrix}
            \]
        \end{minipage}
        \begin{minipage}{0.5\textwidth}
            \[\begin{tikzcd}
                a && b && c
                \arrow[no head, from=1-1, to=1-3]
                \arrow["4", no head, from=1-3, to=1-5]
            \end{tikzcd}\]
        \end{minipage}
    \end{example}
    With a Coxeter matrix $\mathcal{M}$, we will associate a set of relations that will define the \defit{Artin-Tits} monoid of type $\mathcal{M}$.
    \begin{definition}
        The Artin-Tits monoid $M=M_{\Gamma}=M_{\mathcal{M}}$ associated with a Coxeter matrix $\mathcal{M}$ (and a Coxeter graph $\Gamma$) is the monoid with the presentation $\pres{S}{R}^+$, where $R$ is the set of relations of the form:
        \[
        \underbrace{aba\ldots}_{m_{a,b} \text{ factor}} = \underbrace{bab\ldots}_{m_{a,b} \text{ factor}}
        \]
        for all $a, b \in S$ such that $a \neq b$ and $m_{a,b} \ne \infty$.
        If we add the relation $a^2=1$ for all $a\in S$, the monoid given by this presentation is a group $W$ and is called the \defit{Coxeter group} associated with $\mathcal{M}$. Then $(W,S)$ is called a Coxeter system.
        The Coxeter matrix (resp. the Coxeter diagram) is said to be of \defit{spherical type} if $W$ is finite.
    \end{definition}
    The Coxeter group $W$ has a faithful action $\rho$ on $\R^n$ with basis $\{\alpha_1, \ldots, \alpha_n\}$ where generators act as reflections through the hyperplanes orthogonal to the basis vectors with respect to the bilinear form defined by:
    \[
    (\alpha_i|\alpha_j)= \begin{cases} -\cos(\pi/m_{i,j}) & \text{if } m_{i,j} \ne \infty \\ -1 & \text{if } m_{i,j} = \infty \end{cases}.
    \]
    The action is defined by $\rho(s_i)(v)=v-2(v|\alpha_i)\alpha_i$. This action is faithful and preserves the bilinear form. We identify $W$ with its image by $\rho$, in the orthogonal group of $(\R^n, (\cdot|\cdot))$. For any $w \in W$ the subspace of fixed points of $w$ is denoted by $K(w) = \ker(w - \text{Id})$. An element $t \in W$ is called a reflection if codim$(K(t))=1$. The set of reflections is denoted by $T$. If $t\in T$ is a reflection and $w\in W$ then ${}^wt = wtw^{-1}$ is also a reflection. The set $S$ of generators is a subset of $T$. If $s,t \in T$ are two reflections, then $st$ is a rotation, and we denote by $m_{s,t}$ the order of $st$ in $W$ which is always finite in the spherical case. We will write $s \bot t$ if $s$ and $t$ commutes, i.e. when $m_{s,t} = 2$.

\section{Chambers, Coxeter elements}\label{sec:chamber_coxeter}
    There have been multiple definitions of Coxeter elements in the literature, and some confusion can arise between them. Each definition is natural in its context, but the confusion comes from the fact that we can consider a Coxeter system or the associated reflection group, and from the fact that the group is identified with its geometric representation or not. We will distinguish three types of Coxeter elements, that are defined in different contexts.\\
    \begin{definition}
        A \defit{reflection} is an element $s$ of order $2$ in $\GL(V)$ where $V$ is a real finite dimensional vector space, such that $K(s)=\ker(s - \text{Id})$ is a hyperplane. A \defit{geometric reflection group} is a subgroup $W$ of $\GL(V)$ generated by reflections. An \defit{abstract reflection group} is a pair $(W, T)$ where $W$ is a group and $T \subset W$ is a generating set, such that there exists a faithful representation $\rho : W \to \GL(V)$ such that $\rho(W)$ is a geometric reflection group and $\rho(T)$ is the set of reflections in $\rho(W)$. Unless specified otherwise, we will always assume that $W$ is finite.
    \end{definition}
    The classification of finite (abstract) reflection groups by Coxeter shows that a pair $(W,T)$ is an abstract reflection group if and only if there exists $S \subset T$ such that $(W,S)$ is a Coxeter system of spherical type, and $T$ is the closure of $S$ under conjugation. The number $n = |S|$ only depends on $W$ and is called the rank of $W \subset \GL(V)$, of $(W,T)$ and of $(W,S)$. The set $S$ is called a \defit{Coxeter generating set}. The set of all Coxeter generating sets of $(W,T)$ is denoted by $\cgs(W,T)$.
    \begin{remark}
        If $(W,T)$ is an abstract reflection group, one could have a Coxeter system $(W, S)$ where $S$ is not a subset of $T$. Then we will not call $S$ a Coxeter generating set for $(W,T)$. For example in type $A_5$, $W=S_6$ has outer automorphisms that send transpositions to products of three disjoint transpositions and provides a Coxeter system $(W, S)$ where $S$ is not a subset of $T$.   
    \end{remark}
    \begin{example}\label{ex:A3}
        In rank $3$, the chambers of the geometric reflection group $W$ can be represented in the plane in the following way : For each reflection $t \in T$, the (hyper)plane $K(t)$ intersect the unit sphere in a great circle. All these circle delimit surfaces on the unit sphere, which are exactly the intersection of the chambers with the unit sphere. Thus, each chamber is a triangle on the unit sphere. With a stereographic projection one can visualize the chambers in 2D. The Figure \ref{fig:A3_chambers} shows the chambers of the geometric reflection group $W$ of type $A_3$. 
        \begin{figure}[ht]
            \centering
            \includegraphics[width=0.5\textwidth]{"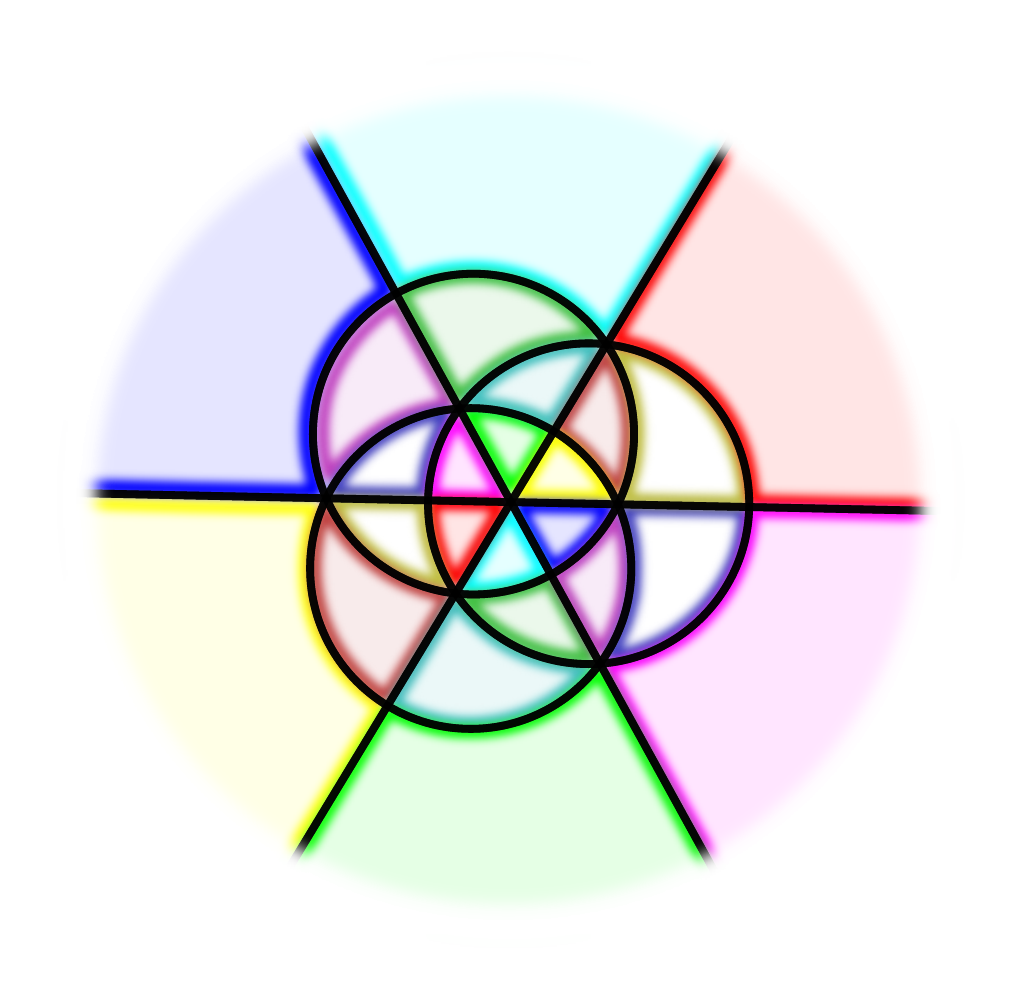"}
            \caption{Chambers of the geometric reflection group $W$ of type $A_3$. Chambers of the same color are opposite.}\label{fig:A3_chambers}
        \end{figure}
    \end{example}
    The first definition of Coxeter element is in the context of a Coxeter system.
    \begin{definition}
        Let $(W,S)$ be a Coxeter system of spherical type. A \defit{simple Coxeter element} for $S$ (or $S$-\defit{simple Coxeter element}) is an element $c \in W$ that can be written as a product of the elements of $S$ in some order, i.e. $c = s_1 s_2 \ldots s_n$ where $\{s_1, \ldots, s_n\} = S$. We will denote by $\C(W,S)$ the set of simple Coxeter elements for $S$.
    \end{definition}
    One can show that all simple Coxeter elements are conjugate \cite{bourbakiGroupesAlgebresLie2007} (chap. V, \S 6, n°1 Proposition 1), but the elements of the conjugacy class of an $S$-simple Coxeter element are not all $S$-simple Coxeter element. For example, in type $A_n$, $W$ is the symmetric group $\Sym_{n+1}$, and $(W,S)$ is a Coxeter system where $S$ is the set of transpositions $(i\ i+1)$ for $i=1, \ldots, n$. Then there are $2^{n-1}$ simple Coxeter elements, corresponding to orientations of the Coxeter diagram, but there are $n!$ elements in the conjugacy class of $c$ (the $(n+1)$-cycles). The natural extension of this definition when working with the set of all reflections is the following.
    \begin{definition}
        Let $(W,T)$ be an abstract reflection group. Then an \defit{abstract Coxeter element} of $(W,T)$ is an element $c \in W$ such that there exists a Coxeter generating set $S \subset T$ and $c$ is a simple Coxeter element $S$. We will denote by $C(W,T)$ the set of abstract Coxeter elements of $(W,T)$. Thus, $C(W,T) = \bigcup_{S} \C(W,S)$ where the union is over all Coxeter generating sets $S \subset T$.
    \end{definition}
    The set of abstract Coxeter elements is closed under any automorphism of $W$ that preserves $T$. Let $\Aut_T(W)$ denote this set of automorphisms of $W$. Then $\Inn(W) \subset \Aut_T(W)$, so $C(W,T)$ is closed under conjugation, contrary to $\C(W,S)$. However, two abstract Coxeter elements may not be conjugate. Moreover, since for every two Coxeter generating sets $S, S' \subset T$ there is a map $S \to S'$ that preserves the Coxeter matrix and extends to an automorphism of $W$ that preserves $T$, $Aut_T(W)$ acts transitively on $\cgs(W,T)$, thus on the set $C(W,T)$.
    \begin{example}
        \label{ex:H2}(cf.~\cite[Example 1.2]{reinerNonconjugateCoxeterElements2015})\\
        For instance, in $I_2(5) = \pres{\{s,t\}}{(st)^5, s^2, t^2}$ the symmetry group of the regular pentagon, then $S=\{s,t\}$ and $S'=\{s, tst\}$ are two Coxeter generating sets and the abstract Coxeter elements $c = st$ and $c^2 = stst$ are not conjugate in $W$. See Figure \ref{fig:H2}.
        \begin{figure}[t]
            \begin{center}
            \begin{tikzpicture}[scale=1.2]
                \tikzstyle{hp}=[line width=1pt,opacity=0.3,black]
                \tikzstyle{hp2}=[line width=1.5pt,opacity=0.4,black]
                \tikzstyle{hp3}=[line width=1.5pt, opacity=0.4,black,dashed]
                \coordinate (X) at (0,0);
                \polygon{(X)}{obj}{5}{2.5}{,,,,}
                \fill[fill=black!8!white]
                    (0,0) -- ($1.25*(obj1)$) arc (18:54:3.125);
                \draw[red,->]
                    ($1.1*(obj1)$) arc (18:90:2.75);
                \node (c) at ($0.6*($(obj5)-(obj3)$)$) {\scriptsize $c$};

                \draw[red,->]
                    ($1.2*(obj1)$) arc (18:162:3);
                \node (c2) at ($0.68*($(obj4)-(obj2)$)$) {\scriptsize $c^2$};

                \polygon{(X)}{obj}{5}{2.5}{,,,,}
                \draw[hp2] ($1.3*(obj1)$) to ($-1.3*(obj1)$);
                \draw[hp3] ($1.3*(obj1)$) to ($-1.3*(obj1)$);
                \draw[hp] ($1.3*(obj2)$) to ($-1.3*(obj2)$);
                \draw[hp2] ($1.3*(obj3)$) to ($-1.3*(obj3)$);
                \draw[hp] ($1.3*(obj4)$) to ($-1.3*(obj4)$);
                \draw[hp3] ($1.3*(obj5)$) to ($-1.3*(obj5)$);
                \draw[line width=1pt,black] (obj1) to (obj2) to (obj3) to (obj4) to (obj5) to (obj1);

                \filldraw[black] (0,0) circle(2pt);

                \node (e)  at ($0.35*($(obj1)-(obj3)$)$) {\scriptsize $\one$};

                \node (s) at ($0.35*($(obj5)-(obj3)$)$) {\scriptsize $s$};
                \node (t) at ($0.35*($(obj1)-(obj4)$)$) {\scriptsize $t$};

                \node (st) at ($0.35*($(obj5)-(obj2)$)$) {\scriptsize $st$};
                \node (ts) at ($0.35*($(obj2)-(obj4)$)$) {\scriptsize $ts$};

                \node (sts) at ($0.35*($(obj4)-(obj2)$)$) {\scriptsize $sts$};
                \node (tst) at ($0.35*($(obj2)-(obj5)$)$) {\scriptsize $tst$};

                \node (stst) at ($0.35*($(obj4)-(obj1)$)$) {\scriptsize $stst$};
                \node (tsts) at ($0.35*($(obj3)-(obj5)$)$) {\scriptsize $tsts$};

                \node (ststs) at ($0.35*($(obj3)-(obj1)$)$) {\scriptsize $ststs$};

                \node (Ht) at ($1.4*(obj1)$) {\scriptsize $H_t$};
                \node (Hs) at ($-1.4*(obj3)$) {\scriptsize $H_s$};
                \node (Hsts) at ($1.4*(obj5)$) {\scriptsize $H_{sts}$};
                \node (Htst) at ($-1.42*(obj4)$) {\scriptsize $H_{tst}$};
                \node (Hststs) at ($-1.4*(obj2)$) {\scriptsize $H_{ststs}$};
            \end{tikzpicture}
            \end{center}
            \caption{The symmetry group of the regular pentagon.}
            \label{fig:H2}
        \end{figure}
    \end{example}
    The last definition is in the context of geometric reflection groups.
    \begin{definition}
        Let $W \subset \GL(V)$ be a finite geometric reflection group, and let $T$ be the set of reflections in $W$. Let $\mathcal{A} = \{K(t) | t \in T\}$ be the set of reflecting hyperplanes. The connected components of $V \setminus \cup\mathcal{A}$ are called \defit{chambers}. If $C$ is a chamber, and $t\in T$, then $H = K(T)$ is called a \defit{wall} of $C$ if $\bar{C}\cap H $ contains a non-empty open subset of $H$. If $C$ is a chamber, we denote by $S(C)$ the set of reflections whose hyperplane is a wall of $C$. Such a set $S$ is classically a Coxeter generating set, and is called a \defit{cameral generating set} (or a \defit{cam-set}). A \defit{cameral Coxeter element} or a simple Coxeter element for some cameral generating set. We will denote by $\C^c(W)$ the set of cameral Coxeter elements of $W$.
    \end{definition}
    This last definition depends on the geometric representation of $W$. For instance, in Example \ref{ex:H2}, with the geometric representation of Figure \ref{fig:H2}, $c$ is cameral but $c^2$ is not. Cameral Coxeter elements form exactly one conjugacy class in $W$.
    \begin{proposition}\label{prop:cam-set_one_orbit}
        Let $W \subset \GL(V)$ be a finite geometric reflection group. The set of cam-sets form a single orbit under the action of $W$ by conjugation on $\mathcal{P}(W)$. As a consequence, the set of cameral Coxeter elements $\C^c(W)$ is a single conjugacy class in $W$.    
    \end{proposition}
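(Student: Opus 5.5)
The proof rests on the classical fact that $W$ acts simply transitively on the set of chambers (Bourbaki, chap.~V, \S 3). We only need transitivity. The plan is to show that the map $C \mapsto S(C)$ is $W$-equivariant for the conjugation action, and then to deduce both statements from transitivity.

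\emph{Step 1: equivariance.} Let $w \in W$ and $t \in T$. Then $K(wtw^{-1}) = w(K(t))$, so $w$ permutes the hyperplane arrangement $\mathcal{A}$. Since $w$ is a homeomorphism of $V$, it also permutes the connected components of $V \setminus \cup\mathcal{A}$, i.e.\ the chambers. Let $C$ be a chamber and $H$ a hyperplane. Because $w$ is a linear homeomorphism, $\bar{C} \cap H$ contains a non-empty open subset of $H$ if and only if $\overline{wC} \cap wH$ contains a non-empty open subset of $wH$. Hence $H$ is a wall of $C$ if and only if $wH$ is a wall of $wC$, which gives
\[ S(wC) = wS(C)w^{-1}. \]

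\emph{Step 2: a single orbit.} Let $S(C)$ and $S(C')$ be two cam-sets. By simple transitivity there is $w \in W$ with $wC = C'$. Step 1 then gives $S(C') = wS(C)w^{-1}$. Conversely, conjugating a cam-set by any element of $W$ gives a cam-set, again by Step 1. So the cam-sets form exactly one orbit under conjugation on $\mathcal{P}(W)$.

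\emph{Step 3: cameral Coxeter elements.} Let $c = s_1\cdots s_n$ be simple for $S(C)$, and let $c' = s'_1 \cdots s'_n$ be simple for $S(C')$. Choose $w$ with $S(C') = wS(C)w^{-1}$. Then $wcw^{-1} = (ws_1w^{-1})\cdots(ws_nw^{-1})$ is a product of the elements of $S(C')$ in some order, so it is simple for $S(C')$. By Bourbaki (chap.~V, \S 6, n°1, Prop.~1), cited earlier, all simple Coxeter elements for the Coxeter system $(W,S(C'))$ are conjugate. Hence $c'$ is conjugate to $wcw^{-1}$, and therefore to $c$. So $\C^c(W)$ lies inside a single conjugacy class.

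For the reverse inclusion, let $c$ be simple for $S(C)$ and $g \in W$. Then $gcg^{-1}$ is simple for $gS(C)g^{-1} = S(gC)$, so $gcg^{-1}$ is cameral. Thus $\C^c(W)$ is closed under conjugation and is exactly one conjugacy class.

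The main obstacle is Step 1, specifically the wall condition under $w$. It reduces to $w$ being a linear homeomorphism that maps $H$ onto $wH$ and $\bar{C}$ onto $\overline{wC}$. The rest depends on the cited transitivity of $W$ on chambers, which we take from Bourbaki.
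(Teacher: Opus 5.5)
Your proof is correct and follows the paper's argument: the equivariance $S(wC) = {}^wS(C)$ from $K({}^wt) = w(K(t))$, transitivity of $W$ on chambers, the conjugacy of simple Coxeter elements for a fixed cam-set, and closure of $\C^c(W)$ under conjugation. Your Step~1 only adds the homeomorphism argument that walls go to walls, which the paper states without proof.
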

    \begin{proof}
        The group $W$ acts simply transitively on the set of chambers (see for example \cite{bourbakiGroupesAlgebresLie2007},chap. V, \S 3, n°3 Théorème 2). Moreover, the action of $W$ on hyperplanes corresponds to the conjugation action on reflections, i.e. for all $w \in W$ and $s \in T$, $K({}^ws) = w(K(s))$. Thus, if $C$ is a chamber and $w \in W$, then $S(w.C) = {}^w S(C)$. Then, conjugation is transitive on the set of cam-sets. Now, if $c, c'$ are two cameral Coxeter elements, there exist two cam-set $S, S'$ such that $c$ (resp. $c'$) is a simple Coxeter element for $S$ (resp. $S'$). There exists $w \in W$ such that $S' = w.S$. Then ${}^wc$ is a simple Coxeter element for $S'$ so ${}^wc$ is conjugate to $c'$. Thus, $c$ and $c'$ are conjugate. Now, if $c$ is a cameral Coxeter element and $w \in W$, then ${}^wc$ is a simple Coxeter element for ${}^w S(C)$ where $C$ is a chamber such that $c$ is a simple Coxeter element for $S(C)$. Thus, ${}^wc$ is also a cameral Coxeter element. So $\C^c(W)$ is a whole conjugacy class.
    \end{proof}
    The order of any Coxeter element (simple, abstract or cameral) is the same number $h$ called the \defit{Coxeter number} of $W$.
    \begin{remark}\label{ref:chamber_to_cam-set}
        Note that the correspondence between chambers and cam-sets is not one-to-one. Indeed, two opposite chambers share the same cam-set. In general, the number of chambers associated to a cam-set is $2^k$ where $k$ is the number of connected components of the Coxeter diagram associated to the cam-set.
    \end{remark}
    The definition given by Bessis in \cite{bessisDualBraidMonoid2003a} of Coxeter elements is different from the three above, it uses the notion of chromatic pair. But it is equivalent to our definition of abstract Coxeter element. Let's recall this definition and call it a \defit{chromatic Coxeter element}.
    \begin{definition}
        Let $(W, T)$ be an abstract finite reflection group, and let $L, R$ be disjoint subsets of $T$. We say that $(L,R)$ is a \defit{chromatic pair} if and only if:
        \begin{enumerate}[(i)]
            \item $S = L \sqcup R$ is a Coxeter generating set, i.e. $(W, S)$ is a Coxeter system.
            \item For all $s,t \in L$ (resp. $R$), $m_{s,t}=2$.
        \end{enumerate}
        Then, the \defit{chromatic Coxeter element} associated with the chromatic pair $(L,R)$ is the element $c_{L,R} := s_L s_R$ where $s_L := \prod_{s\in L}s$ and $s_R := \prod_{s\in R}s$.
    \end{definition}
    The definition of chromatic pairs is stable by any automorphism of $W$ that preserves $T$. Indeed, if $\phi \in \Aut_T(W)$ and $L,R$ is a chromatic pair, then $\phi(S)=\phi(L)\sqcup \phi(R)$ is a Coxeter generating set. Moreover, as explained above, $\Aut_T(W)$ acts transitively on $C(W,T)$, which allows one to prove the following proposition.
    \begin{proposition}
        Let $(W,T)$ be an abstract finite reflection group. Then the set of chromatic Coxeter elements is exactly the set of abstract Coxeter elements $C(W,T)$.
    \end{proposition}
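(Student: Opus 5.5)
The proof has two inclusions: every chromatic Coxeter element is an abstract Coxeter element, and every abstract Coxeter element is chromatic. The first is immediate. If $(L,R)$ is a chromatic pair, then $S=L\sqcup R$ is a Coxeter generating set by (i). The element $c_{L,R}=s_Ls_R$ is a product of all elements of $S$, each taken exactly once, in some order. Hence $c_{L,R}\in\C(W,S)\subset C(W,T)$.

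For the reverse inclusion, I would first show that chromatic Coxeter elements exist, and then propagate this using the transitivity of $\Aut_T(W)$ on $C(W,T)$ stated above.

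\emph{Existence.} Fix a Coxeter generating set $S\subset T$. The Coxeter graph of a finite Coxeter system is a forest, so it is bipartite. Choose a $2$-colouring $S=L\sqcup R$ in which no edge joins two vertices of the same colour. Two elements $s,t$ of the same colour are non-adjacent, so $m_{s,t}=2$. Thus $(L,R)$ is a chromatic pair, and $c_{L,R}\in\C(W,S)$ is chromatic.

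\emph{Transfer.} Let $c\in C(W,T)$ be arbitrary. By the transitivity of $\Aut_T(W)$ on $C(W,T)$, there is $\phi\in\Aut_T(W)$ with $\phi(c_{L,R})=c$. Then $\phi(c_{L,R})=s_{\phi(L)}s_{\phi(R)}$. The pair $(\phi(L),\phi(R))$ is chromatic: $\phi(S)\subset T$ is a Coxeter generating set, and $\phi$ preserves orders of products, so $m_{\phi(s),\phi(t)}=m_{s,t}=2$ for $s,t$ of the same colour. Hence $c=c_{\phi(L),\phi(R)}$ is chromatic.

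The step needing care is the transitivity on $C(W,T)$, and I would not take it as given. What the paper actually establishes is transitivity on $\cgs(W,T)$, and that only moves $S$ to $S'$, not elements to elements. To get transitivity on elements, I would argue directly. Take $c'\in\C(W,S')$ and map $S'$ to $S$ by some $\psi\in\Aut_T(W)$. Then $\psi(c')\in\C(W,S)$. By the standard fact that all $S$-simple Coxeter elements are conjugate (Bourbaki, cited earlier), $\psi(c')$ is conjugate to $c_{L,R}$ by some inner automorphism, and inner automorphisms lie in $\Aut_T(W)$. Composing gives the required automorphism. So the chain of justification is: transitivity on Coxeter generating sets, plus conjugacy within $\C(W,S)$, plus $\Inn(W)\subset\Aut_T(W)$. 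The remaining ingredients, bipartiteness of the forest and preservation of $m_{s,t}$ under $\phi$, are routine.
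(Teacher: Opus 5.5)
Your proof is correct and follows the paper's argument: you prove the easy inclusion, then transport a fixed chromatic Coxeter element onto an arbitrary $c\in C(W,T)$ by some $\phi\in\Aut_T(W)$ and observe that $(\phi(L),\phi(R))$ is again a chromatic pair. The only difference is that you justify two points the paper leaves implicit: the existence of a chromatic pair (via bipartiteness of the forest Coxeter graph), and the passage from transitivity on $\cgs(W,T)$ to transitivity on $C(W,T)$ (via conjugacy of $S$-simple Coxeter elements and $\Inn(W)\subset\Aut_T(W)$).
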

    \begin{proof}
        The fact that chromatic Coxeter elements are abstract Coxeter elements is immediate from the definition. Conversely, let $c \in C(W,T)$ be an abstract Coxeter element. Let $L_0, R_0$ be a chromatic pair, and let $c_0 = s_{L_0} s_{R_0}$ be the associated chromatic Coxeter element. Then, there exists $\phi \in \Aut_T(W)$ such that $\phi(c_0) = c$. Then, $\phi(L_0), \phi(R_0)$ is a chromatic pair and $c = \phi(c_0) = s_{\phi(L_0)} s_{\phi(R_0)}$ is the associated chromatic Coxeter element.
    \end{proof}
    \subsection{Cameral decomposition}
        Let $(W,T)$ be an abstract reflection group of rank $n$.
        \begin{definition}
            Let $c$ be an abstract Coxeter element. An $n$-tuple $(t_1, \ldots, t_n)$ such that $t_1 t_2 \ldots t_n = c$ is called a \defit{reduced T-decomposition} of $c$. Moreover, if $\{t_1, \ldots, t_n\}$ is a Coxeter generating set, then we say that $(t_1, \ldots, t_n)$ is a \defit{Coxeter decomposition} of $c$. If $W$ is a geometric reflection group, and if $\{t_1, \ldots, t_n\}$ is a cameral generating set, then we say that $(t_1, \ldots, t_n)$ is a \defit{cameral decomposition} of $c$. We will denote by $\Red_T(c)$ the set of reduced T-decompositions of $c$, and by $\Red_T^c(c)$ the set of cameral decompositions of $c$.
        \end{definition}
        We will see that any abstract Coxeter element has non-Coxeter T-decompositions. On the other hand, by definition, a Coxeter decomposition of a non-cameral abstract Coxeter element is never cameral. However, one can ask if a cameral Coxeter element may have Coxeter decompositions that are not cameral. This question is intimately linked to the study of automorphisms of $W$ that preserve $T$. 
    \subsection{Automorphisms of reflection groups}
        The study of automorphisms of reflection groups done in \cite{marinAutomorphismsComplexReflection2007} and in \cite{reinerNonconjugateCoxeterElements2015} gives the answer to our question.\par
        Let's recall the results from \cite{marinAutomorphismsComplexReflection2007}. Let $W \subset \GL(V)$ be a finite geometric reflection group, and $T$ the set of its reflections. Denote by $K_W$ the subfield of $\R$ generated by the traces of the elements of $W$. Denote by $\Gamma = \Gal(K_W/\Q)$ its Galois group over $\Q$, and let $\chi$ be the character of $W$ (seen as a representation, i.e. $\chi : w \in W \mapsto \Tr(w) \in K_W$). Then an automorphism $a \in A = \Aut_T(W)$ is called a \defit{Galois automorphism attached to} $\gamma \in \Gamma$ if and only if $\chi \circ a = \gamma \circ \chi$. The first main result of \cite{marinAutomorphismsComplexReflection2007} (Theorem 1.2) is that any automorphism in $\Aut_T(W)$ is a Galois automorphism. Thus, there is a group morphism $A \to \Gamma$ that sends an automorphism attached to $\gamma\in \Gamma$ to $\gamma$. We denote by $N$ the subgroup of $A$ consisting of Galois automorphisms attached to the identity, i.e. the kernel of this morphism. It contains exactly the conjugacy by elements of the normalizer of $W$ in $\GL(V)$, including the inner automorphisms. We have the following inclusions: $\Inn(W) \subset N \subset A \subset \Aut(W)$. Taking the quotient by $\Inn(W) = W/ZW$, we denote by $\bar{A}, \bar{N}$ the images of $A$ and $N$ in $\Out(G)$. The elements of $\bar{N}$ are called diagram automorphisms, and $\bar{N}$ is normal in $\bar{A}$ because $N$ was normal in $A$. Then Theorem 1.3 in  \cite{marinAutomorphismsComplexReflection2007} state that the map $A \to \Gamma$ (or equivalently, the quotient $\bar{A} \to \Gamma$) is surjective, and has a section, i.e. there is a decomposition $\bar{A} = \bar{N} \rtimes \Gamma$.\par
        Before stating the result of \cite{reinerNonconjugateCoxeterElements2015}, we need to show that $\bar{N}$ acts trivially on $\C(W,S)/\Inn(W)$. We have seen that $A=\Aut_T(W)$ acts transitively on $\C(W,T)$, thus $\bar{A}$ acts transitively on the conjugacy classes of abstract Coxeter elements $\C(W,T)/\Inn(W)$. We want to show that $\bar{N}$ is in the kernel of this action. In other words we need the following lemma.
        \begin{lemma}\label{lem:N_acts_trivially}
            Let $c,c' \in W$ be two abstract Coxeter elements that are conjugated in $\GL(V)$. Then they are conjugated in $W$.
        \end{lemma}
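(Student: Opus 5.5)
We reduce the statement to the case where $c$ and $c'$ lie in the same $\Aut_T(W)$-orbit, and then exploit the Galois theory recalled above. Since $\Aut_T(W)$ acts transitively on $C(W,T)$, there is $a\in A=\Aut_T(W)$ with $a(c)=c'$. By Theorem 1.2 of \cite{marinAutomorphismsComplexReflection2007}, $a$ is a Galois automorphism attached to some $\gamma\in\Gamma$, so $\chi\circ a=\gamma\circ\chi$. Conjugate elements of $\GL(V)$ have the same characteristic polynomial, hence the same trace on every power. Applying this to $c^k$ and $c'^k$ gives
\[
\chi(c^k)=\chi(c'^k)=\chi(a(c^k))=\gamma(\chi(c^k))\quad\text{for all } k .
\]
We may therefore reduce to the case $\gamma=\mathrm{id}$, i.e. $a\in N$, provided we can show that $\gamma$ fixes $\chi$ on the cyclic group $\langle c\rangle$ and that this suffices.

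\textbf{Step 1.} First we determine the eigenvalues of $c$. For a Coxeter element (any $S$-simple element of an irreducible component), the eigenvalues are $e^{2i\pi m_j/h}$, where the $m_j$ are the exponents. Over $\R$, the eigenvalues of $c^k$ are $\zeta^{km_j}$ with $\zeta=e^{2i\pi/h}$. The traces $\chi(c^k)$ are sums of such roots of unity, and they generate the subfield of $\Q(\zeta)^+$ that they span. The condition $\gamma(\chi(c^k))=\chi(c^k)$ for all $k$ then says that $\gamma$ fixes the field $\Q(\chi(\langle c\rangle))$.

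\textbf{Step 2.} Next we argue that $K_W$ is generated by the traces of powers of a Coxeter element, $K_W=\Q(\chi(\langle c\rangle))$. This is a known fact: $K_W=\Q(\cos(2\pi/m_{s,t}))$, and $\cos(2\pi/h)$ generates this field. It can also be checked case by case in the non-crystallographic types $I_2(m)$, $H_3$, $H_4$, and is trivial for Weyl groups, where $K_W=\Q$. It follows that $\gamma=\mathrm{id}$, so $a\in N$.

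\textbf{Step 3.} Since $a\in N$, $a$ is conjugation by some element $g$ of the normalizer of $W$ in $\GL(V)$. The remaining claim is that diagram automorphisms preserve the conjugacy class of a Coxeter element. Write $c=s_1\cdots s_n$ for a cam-set $S$. Up to an inner automorphism, $a$ maps $S$ to $S$, because $g$ permutes the chambers and $W$ acts simply transitively on them, so Proposition \ref{prop:cam-set_one_orbit} applies. Then $a(c)$ is again an $S$-simple Coxeter element, and all $S$-simple Coxeter elements are conjugate in $W$ by the Bourbaki result quoted earlier. Hence $c'=a(c)$ is conjugate to $c$ in $W$.

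The main obstacle is Step 2, showing that a Galois automorphism $\gamma$ that fixes all $\chi(c^k)$ is trivial on $K_W$. I would first attempt the uniform argument via $\cos(2\pi/h)$. If this fails, for instance in a reducible case, I would reduce to irreducible components and then check $I_2(m)$, $H_3$ and $H_4$ by hand, where $K_W=\Q(\cos(2\pi/m))$ or $\Q(\sqrt5)$ and $\chi(c)$ already generates it.
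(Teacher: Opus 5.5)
The gap is in Step 3. You write $c=s_1\cdots s_n$ with $S$ a cam-set, but the lemma is about \emph{abstract} Coxeter elements, and these need not be simple for any cam-set. In Example~\ref{ex:H2}, $c^2=stst$ is simple for $\{s,tst\}$ but not for any cam-set. In $H_3$ and $H_4$, one of the two $W$-classes of abstract Coxeter elements is non-cameral. For a non-cameral Coxeter generating set $S$, knowing that $g$ permutes the chambers says nothing about whether ${}^gS$ is $W$-conjugate to $S$, so Step 3 as written proves the lemma only for cameral $c$.

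The missing idea is the one the paper's proof is built on: $N$ is normal in $A$ (it is the kernel of $A\to\Gamma$), and $A$ is transitive on $C(W,T)$. Concretely, pick a cameral $c_0$ and $b\in A$ with $b(c_0)=c$. Then $b^{-1}ab\in N$ sends $c_0$ to $b^{-1}(c')$, which by your cameral argument equals $wc_0w^{-1}$ for some $w\in W$. Applying $b$ gives $c'=b(w)\,c\,b(w)^{-1}$. The paper says the same thing at the level of classes. $\bar N$ fixes the class of cam-sets in $\cgs(W,T)/\Inn(W)$; being normal in $\bar A$, which is transitive there, it fixes every class. This then passes to $\C(W,T)/\Inn(W)$ through the surjective equivariant map $f$.

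Once patched, your route covers more than the paper's proof does. The paper only shows that $\bar N$ acts trivially on $\C(W,T)/\Inn(W)$, i.e.\ it handles conjugation by the normaliser of $W$. Your trace identity $\chi(c^k)=\gamma(\chi(c^k))$ is what converts an arbitrary $\GL(V)$-conjugacy into $a\in N$.

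Two corrections to that part. First, the ``uniform'' justification in Step 2 is wrong: $\cos(2\pi/h)$ need not even lie in $K_W$. For $A_4$, $\cos(2\pi/5)\notin\Q=K_W$; for $H_4$, $\cos(2\pi/30)$ has degree $4$ over $\Q$. So rely on the case check, which does work:
\begin{itemize}
\item $K_W=\Q$ for Weyl groups;
\item $\chi(c)=2\cos(2\pi k/m)$ with $\gcd(k,m)=1$ in $I_2(m)$;
\item $\chi(c)=(\pm\sqrt5-1)/2$ in $H_3$ and $H_4$.
\end{itemize}
Second, your fallback of reducing to irreducible components cannot succeed, because the statement is false for reducible groups. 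In $I_2(5)\times I_2(5)$, with $r$ a rotation of order $5$, the abstract Coxeter elements $(r,r^2)$ and $(r^2,r)$ are conjugate by the linear map swapping the two planes, but not in $W$. Irreducibility must be a standing hypothesis, as it implicitly is in the Marin--Michel results that both you and the paper invoke.
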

        \begin{proof}
            It has been pointed out in \cite{reinerNonconjugateCoxeterElements2015} that this can be proven using Springer's theory, but we will present a direct proof that is useful for the following.\par
            The actions of $A$ on $\cgs(W,T)$ and on $\C(W,T)$ induce an action of $\bar{A}$ on the conjugacy classes of Coxeter generating sets: $\cgs(W,T)/\Inn(W)$, and on conjugacy classes of abstract Coxeter elements: $\C(W,T)/\Inn(W)$. Let $f$ be the map $\cgs(W,T)/\Inn(W) \to \C(W,T)/\Inn(W)$ that sends a conjugacy class of Coxeter generating set to the conjugacy class of its simple Coxeter elements. It is well-defined since all simple Coxeter elements with respect to a fixed cam-set are conjugate. Moreover, $f$ is $\bar{A}$-equivariant. 
\[\begin{tikzcd}[column sep=tiny,row sep=small]
	&&& {} &&&& \\
	& A &&&&& {\bar{A}} \\
	{\cgs(W,T)} && {\C(W,T)} &&& {\cgs(W,T)/\Inn(W)} && {\C(W,T)/\Inn(W)}
	\arrow["{\curvearrowright}"{marking, allow upside down}, draw=none, from=2-2, to=3-3]
	\arrow["{\curvearrowright}"{marking, allow upside down}, draw=none, from=2-7, to=3-8]
	\arrow["{\curvearrowleft}"{marking, allow upside down}, draw=none, from=3-1, to=2-2]
	\arrow["{\bullet/\Inn(W)}", shift left=5, between={0.1}{0.9}, from=3-3, to=3-6]
	\arrow["{\curvearrowleft}"{marking, allow upside down}, draw=none, from=3-6, to=2-7]
	\arrow["f"', from=3-6, to=3-8]
\end{tikzcd}\]
            We claim that $\bar{N}$ acts trivially on the $\cgs(W,T)/\Inn(W)$. Indeed, if $n \in N$ is the conjugacy by some element $g \in \GL(V)$, then $g$ permutes the walls $(K(t))_{t\in T}$ since conjugacy by $g$ preserves $T$. As a homeomorphism, $g$ permutes the connected components of $V\setminus \cup K(t)$, so it sends chambers to chambers, thus $n$ sends cameral generating sets to cameral generating sets. Moreover, $N$ is normal in $A$, thus $\bar{N}$ is normal in $\bar{A}$, and $\bar{A}$ acts transitively on $\cgs(W,T)/\Inn(W)$, thus if $\bar{N}$ fixes one element of $\cgs(W,T)/\Inn(W)$, it fixes them all. Since it fixes the conjugacy class of cam-sets, $\bar{N}$ acts trivially on $\cgs(W,T)/\Inn(W)$. Finally, since $f$ is surjective $\bar{N}$ acts trivially on $\C(W,T)/\Inn(W)$.
        \end{proof}
        We know thanks to Lemma \ref{lem:N_acts_trivially} that the actions of $\bar{A}$ on $\C(W,T)/\Inn(W)$ and on $\cgs(W,T)/\Inn(W)$ induce actions of the quotient $\bar{A}/\bar{N}$ on these two sets. Then Theorem 5.2, (i) in \cite{reinerNonconjugateCoxeterElements2015} state that the action of $\Gamma = \bar{A}/\bar{N}$ on $\C(W,T)/\Inn(W)$ is free. We can now answer whether a cameral Coxeter element may have Coxeter decompositions that are not cameral.
        \begin{proposition}\label{prop:cam_cox_cam_dec}
            Let $c$ be a cameral Coxeter element. Then, all Coxeter decompositions of $c$ are cameral decompositions.
        \end{proposition}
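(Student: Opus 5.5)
Let $c$ be a cameral Coxeter element and $(t_1,\ldots,t_n)$ a Coxeter decomposition of $c$. Put $S'=\{t_1,\ldots,t_n\}$; then $S'\in\cgs(W,T)$ and $c$ is a simple Coxeter element for $S'$. Fix a cam-set $S_0$. The plan is to show that $S'$ is conjugate in $W$ to $S_0$. Once this is known, Proposition \ref{prop:cam-set_one_orbit} gives $w\in W$ with $S'={}^wS_0=S(w.C_0)$, where $C_0$ is a chamber with $S(C_0)=S_0$. So $S'$ is a cam-set and the decomposition is cameral.

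The key tool is the map $f:\cgs(W,T)/\Inn(W)\to \C(W,T)/\Inn(W)$ from the proof of Lemma \ref{lem:N_acts_trivially}, together with the actions of $\Gamma=\bar A/\bar N$ on both sides.

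\textbf{Step 1: $f$ is $\Gamma$-equivariant and the action on classes of Coxeter generating sets is transitive.} The map $f$ is $\bar A$-equivariant by construction. Lemma \ref{lem:N_acts_trivially} and its proof show that $\bar N$ acts trivially on both sets, so both actions descend to $\Gamma$ and $f$ remains $\Gamma$-equivariant. Since $A$ acts transitively on $\cgs(W,T)$, the group $\Gamma$ acts transitively on $\cgs(W,T)/\Inn(W)$.

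\textbf{Step 2: $f$ is injective.} Let $X,Y\in \cgs(W,T)/\Inn(W)$ with $f(X)=f(Y)$. By transitivity, $Y=\gamma X$ for some $\gamma\in\Gamma$, hence $f(X)=f(\gamma X)=\gamma f(X)$. By Theorem 5.2 (i) of \cite{reinerNonconjugateCoxeterElements2015}, the action of $\Gamma$ on $\C(W,T)/\Inn(W)$ is free. Therefore $\gamma=1$ and $X=Y$.

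\textbf{Step 3: conclusion.} The class of $S'$ is sent by $f$ to the conjugacy class of $c$. This is the class of cameral Coxeter elements, so it is also $f([S_0])$. Injectivity of $f$ gives $[S']=[S_0]$, meaning $S'={}^wS_0$ for some $w\in W$. As noted at the start, $S'$ is then a cam-set.

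\textbf{Expected obstacle.} The delicate point is Step 1, namely that $f$ is well defined and equivariant for the quotient $\Gamma$. Two facts need to be checked carefully.
\begin{enumerate}[(a)]
\item All simple Coxeter elements for an arbitrary Coxeter generating set $S'$ are conjugate, not only those for cam-sets. This is Bourbaki's result applied to the Coxeter system $(W,S')$.
\item The identification $\Gamma\cong\bar A/\bar N$ comes from the splitting $\bar A=\bar N\rtimes\Gamma$, so that freeness of the action on conjugacy classes of Coxeter elements, as stated in \cite{reinerNonconjugateCoxeterElements2015}, applies to this quotient.
\end{enumerate}
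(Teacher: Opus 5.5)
Your proposal is correct and follows the paper's proof essentially step for step: you deduce injectivity of $f$ from the transitivity of $\Gamma=\bar A/\bar N$ on $\cgs(W,T)/\Inn(W)$ combined with the freeness result of \cite{reinerNonconjugateCoxeterElements2015}, and then pull back the conjugacy class of cameral Coxeter elements to the single conjugacy class of cam-sets. Your remark (a) is a useful precision: well-definedness of $f$ requires Bourbaki's conjugacy result for an arbitrary Coxeter generating set, not only for cam-sets as the paper phrases it.
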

        \begin{proof}
            The statement reduces to showing that the map $f$ introduced in the proof of Lemma \ref{lem:N_acts_trivially} is injective. Recall that $f$ is the map from $\cgs(W,T)/\Inn(W)$ to $\C(W,T)/\Inn(W)$ that sends a conjugacy class of Coxeter generating set to the conjugacy class of its simple Coxeter elements. Let's $\bar{S},\bar{S'}$ be two conjugacy classes of Coxeter generating sets which generate the same conjugacy class of Coxeter element, i.e. $\bar{S}, \bar{S'} \in \cgs(W,T)/\Inn(W)$ such that $f(\bar{S}) = f(\bar{S'})$. Since $\bar{A}/\bar{N}$ acts transitively on $\cgs(W,T)$, there exists $g \in \Gamma$ such that $\bar{S'} = g.\bar{S}$. Then $g.f(\bar{S}) = f(g.\bar{S}) = f(\bar{S'}) = f(\bar{S})$, thus $g = 1$ since the action of $\bar{A}/\bar{N}$ on $\C(W,T)/\Inn(W)$ is free. Then $\bar{S} = \bar{S'}$. Thus, $f$ is injective, (and thus bijective). As a consequence, the conjugacy class of cameral Coxeter elements, $\C^c(W)$, has only one preimage by $f$, which is the conjugacy class of cameral generating sets. Thus, if $c$ is a cameral Coxeter element, all Coxeter generating sets $S$ such that $c$ is a simple Coxeter element for $S$ are cameral generating sets.
        \end{proof}
        \begin{remark}
            The results of \cite{reinerNonconjugateCoxeterElements2015} imply that, for Weyl groups, which are exactly the finite reflection groups for which the field $K_W$ is $\Q$, there is only one conjugacy class of Coxeter generating sets, and of Coxeter elements, the cameral ones.
        \end{remark}

\section{The Hurwitz action on T-Decompositions}\label{sec:hurwitz_on_dec}
    The Hurwitz action is an action of the braid group on $n$ strands $B_n$ on the set of ordered $n$-tuples of elements in a group that preserves the product in order.\\
    If $(W,T)$ is a finite reflection group of rank $n$, for all $s, t$ in $T$, we will denote by ${}^ts=tst$ the conjugate of $s$ by $t$. The braid group $B_n$ acts on the $n$-tuples of distinct elements of $T$ by the Hurwitz action defined as follows:
    \[
    \braid_i(s_1,\ldots,s_n) = (s_1,\ldots,s_{i-1},s_{i+1},{}^{s_{i+1}}s_i, s_{i+2},\ldots,s_n) \text{ for all } 1 \leq i < n.
    \]
    The Hurwitz action restricted to the set of reduced $T$-decompositions of an abstract Coxeter element is transitive. The first reference of this fact is a letter from P. Deligne to E. Looijenga \cite{deligneDeligneLooijenga1974}, and the first published proof is due to Bessis in \cite{bessisDualBraidMonoid2003a}. However, this action does not preserve the underlying set $S$ of the decomposition, and it does not even preserve the property of being a Coxeter generating set:
    \begin{example}\label{exa:hurwitz_preserv_no_cox}
        Let $W$ be the Coxeter group of type $A_3$, with $S = \{u, s, t\}$, and let $c = ust$. Then $(u, s, t)$ is a cameral decomposition of $c$. Applying the Hurwitz action by $\braid_1$ gives: $\braid_1.(u, s, t) = (s, {}^su, t) = (s, usu, t)$. Then, $s \times usu = us$ has order $3$, $usu \times t = {}^u(st)$ has order $3$ and $st$ has order $3$. Thus, $\{s, usu, t\}$ is not a cam-set, or it would be conjugate to $S$ and would contain two commuting reflections.
    \end{example}
    But we can restrict this action to preserve the fact of being a cam-set. We just allow two rules: (a) swap reflections that commute, (b) move the last reflection from the far right to the far left (this has the effect of conjugating the set $S$ of reflections in the $n$-tuple).\\
    Formally, we define the relations $\toa$ and $\tob$ on the set of ordered cam-sets by the following rules:\\
    (a) If $s_i \bot s_{i+1}$, then $(s_1,\ldots,s_n) \toa (s_1,\ldots,s_{i-1},s_{i+1},s_i,s_{i+2},\ldots,s_n)$\\
    (b) $(s_1,\ldots,s_n) \tob (s_n, s_ns_1s_n, s_ns_2s_n,\ldots,s_ns_{n-1}s_n)$\\
    Rule $(a)$ is involutory, i.e. relation $\toa$ is symmetric, and has no effect on the underlying cam-set, while rule $(b)$ conjugates the underlying cam-set by $s_n$. Applying rule $(b)$ $n$ times has the effect of conjugating the ordered cam-set by the product of its elements, which is a Coxeter element. Applying rule $(b)$ $nh$ times thus has the effect of conjugating the cam-set by $c^h = 1$, thus rule $(b)$ is "cyclic", in particular,  its transitive closure is symmetric. Thus, the reflexive transitive closure of $\toa$ (resp. $\tob$) is an equivalence relation, that we will denote $\toas$ (resp. $\tobs$). We will denote by $\eqab$ the equivalence relation generated by both $\toa$ and $\tob$.\\
    Our main result is that all cameral decompositions of a cameral Coxeter element are $\eqab$-equivalent. We will introduce the formalism of partial actions of groups to prove this statement.

\section{Partial actions of groups}\label{sec:partial_actions_of_groups}
    \subsection{Generalities}
        We define the notion of partial group action as in \cite{kellendonkPartialActionsGroups2004}.
        \begin{definition}
            A \defit{partial action} of a group $G$ on a set $X$ is a partial map $\rho: G \times X \to X$, denoted by $\rho(g,x) = g.x$ whenever it is defined, such that:\\
            (PA0) For all $x \in X$, $1.x$ is defined and $1.x = x$.\\
            (PA1) For all $x \in X$, if $g.x$ is defined, then $g^{-1}.(g.x)$ is defined and $g^{-1}.(g.x) = x$.\\
            (PA2) For all $x \in X$, if $h.x$ and $g.(h.x)$ are defined, then $(gh).x$ is defined and $(gh).x = g.(h.x)$.\\
            The axiom (PA0) is sometimes omitted, and a partial action satisfying (PA0) is called \defit{unital}. Here, all partial actions are unital. A partial action is total if $\rho$ is a total map. Then $\rho$ is a group action of $G$ on $X$. A partial action $\rho$ is called \defit{transitive} if for all $x,y \in X$, there exists $g \in G$ such that $g.x$ is defined and equal to $y$. If $x \in X$, the \defit{orbit} of $x$ is the set $\orbit{x} = \{y \in X| \exists g \in G , \alpha(g,x)=y\}.$ The domain of definition of $\rho$ is the set $\domain{\rho} = \{(g,x) \in G \times X | \rho(g,x) \text{ is defined}\}.$
        \end{definition}
        We fix a group $G$ acting partially on a set $X$ by a partial action $\alpha$. We can define the notion of sub-partial action and get classic sub-structures properties, restriction, intersection, sub-partial action generated by a subset, as follows.
        \begin{definition}
            Let $\beta$ be a partial action of $G$ on $X$. We say that $\beta$ is a \defit{sub-partial action} of $\alpha$ if for all $g \in G$ and $x \in X$, if $\beta(g,x)$ is defined, then $\alpha(g,x)$ is defined and $\beta(g,x) = \alpha(g,x)$. We denote $\beta \subset \alpha$.
        \end{definition}
        The restriction of a partial action on a subset of $X$ is defined as follows:
        \begin{definition}
            Let $Y \subset X$ be a subset. We define the \defit{restriction} of $\alpha$ to $Y$, denoted by $\alpha|_Y$, as the partial map defined by: for all $g \in G$ and $y \in Y$, $\alpha|_Y(g,y)$ is defined if and only if $\alpha(g,y)$ is defined and $\alpha(g,y) \in Y$, and in this case, $\alpha|_Y(g,y) = \alpha(g,y)$. Then $\alpha|_Y \subset \alpha$.
        \end{definition}
        \begin{proof}
            We have to check the three axioms of partial actions.\\
            (PA0) For all $y \in Y$, $\alpha|_Y(1,y)$ is defined and equal to $y \in Y$.\\
            (PA1) Let $y \in Y$ and $g \in G$ such that $\alpha|_Y(g,y)$ is defined. Then, $\alpha(g,y)$ is defined and in $Y$, so by (PA1), $\alpha(g^{-1}, \alpha(g,y))$ is defined and equal to $y \in Y$.\\
            (PA2) Let $y \in Y$ and $g,h \in G$ such that $\alpha|_Y(h,y)$ and $\alpha|_Y(g, \alpha|_Y(h,y))$ are defined. Then, $\alpha(h,y)$ is defined and in $Y$, and since $\alpha|_Y(h,y) = \alpha(h,y)$, we have that $\alpha(g, \alpha(h,y))$ is defined and in $Y$. Thus, by (PA2), $\alpha(gh, y)$ is defined and equal to $\alpha(g, \alpha(h,y))$. Since $\alpha(g, \alpha(h,y)) \in Y$, we have that $\alpha|_Y(gh, y)$ is defined and equal to $\alpha|_Y(g, \alpha|_Y(h,y)).$
        \end{proof}
        Intersection of sub-partial actions which corresponds to the intersection of their domains of definition.
        \begin{definition}
            Let $B$ be a set of sub-partial actions of $\alpha$, and let $\gamma$ be the partial action defined by: for all $g \in G$ and $x \in X$, $\gamma(g,x)$ is defined if and only if for all $\beta \in B$, $\beta(g,x)$ is defined, and in this case, $\gamma(g,x) = \alpha(g,x)$. Then, $\gamma \subset \alpha$, denoted by $\gamma = \bigcap_{\beta \in B} \beta$, called the \defit{intersection} of the partial actions in $B$.
        \end{definition}
        \begin{proof}
            (PA0) For all $x \in X$, for all $\beta \in B$, $\beta(1,x)$ is defined and equal to $x$, thus $\gamma(1,x)$ is defined and equal to $x$.\\
            (PA1) Let $x \in X$ and $g \in G$ such that $\gamma(g,x)$ is defined. Then, for all $\beta \in B$, $\beta(g,x)$ is defined, so by (PA1) for all $\beta \in B$, $\beta(g^{-1}, \beta(g,x))$ is defined and equal to $x$. Thus, $\gamma(g^{-1}, \gamma(g,x))$ is defined and equal to $x$.\\
            (PA2) Let $x \in X$ and $g,h \in G$ such that $\gamma(h,x)$ and $\gamma(g, \gamma(h,x))$ are defined. Then, for all $\beta \in B$, $\beta(h,x)$ is defined, and since $\gamma(h,x) = \beta(h,x)$, we have that for all $\beta \in B$, $\beta(g, \beta(h,x))$ is defined. Thus, by (PA2), for all $\beta \in B$, $\beta(gh, x)$ is defined and equal to $\beta(g, \beta(h,x))$. Thus, $\gamma(gh, x)$ is defined and equal to $\gamma(g, \gamma(h,x))$.
        \end{proof}
        Finally, we can define the sub-partial action generated by a set of elements of $G \times X$.
        \begin{definition}
            Let $R \subset G \times X$ be a set of pairs $(g,x) \in \domain(\alpha)$. Let $B$ be the set of sub-partial actions $\beta$ of $\alpha$ such that  $\beta(g,x)$ is defined for all $(g,x) \in R$, and let $\gamma = \bigcap_{\beta \in B} \beta$. Then by the above lemma, $\gamma$ is a sub-partial action of $\alpha$, called the \defit{partial action generated by} $R$.
        \end{definition}
        A useful description of the sub-partial action generated by a set is given by the following proposition:
        \begin{proposition}\label{prop:generated_partial_action}
            Let $R \subset G \times X$ be a set of pairs $(g,x) \in \domain(\alpha)$. Let $\gamma$ be the sub-partial action of $\alpha$ generated by $R$. Let $g \in G$ and $x \in X$. Then, $\gamma(g,x)$ is defined if and only if there exist two finite sequences $g_1, g_2, \ldots, g_k$ in $G$ and $x_1, x_2, \ldots, x_k, x_{k+1}$ in $X$ such that $x_1 = x$, $g = g_k\ldots g_2g_1$, and for all $1\le i \le k$ $x_{i+1} = \alpha(g_i,x_i)$ and $(g_i, x_i) \in R$. 
        \end{proposition}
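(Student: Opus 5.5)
The plan is to prove both implications by introducing an explicit candidate and comparing it with $\gamma$. Define a partial map $\beta$ on $G\times X$ by declaring $\beta(g,x)$ defined exactly when there are sequences $g_1,\ldots,g_k$ and $x_1,\ldots,x_{k+1}$ as in the statement, and setting $\beta(g,x)=x_{k+1}$. We allow $k=0$, in which case $g=1$ and $x_1=x$. First, $\beta(g,x)=\alpha(g,x)$ whenever $\beta(g,x)$ is defined. This follows by induction on $k$ using (PA2) for $\alpha$: from $x_{i+1}=\alpha(g_i,x_i)$ we get $x_{k+1}=\alpha(g_k\cdots g_1,x_1)$. So $\beta$ is well defined, independently of the chain chosen, and it is contained in $\alpha$. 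It also contains $R$, since for $(g,x)\in R$ one takes the chain with $k=1$.

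\emph{The ``if'' direction.} Let $\beta'$ be any sub-partial action of $\alpha$ defined on all of $R$. By induction on $k$, using (PA2) for $\beta'$, each chain gives $\beta'(g_k\cdots g_1,x_1)$ defined. Hence $\beta\subset\beta'$ for every $\beta'\in B$, and therefore $\beta\subset\gamma=\bigcap_{\beta'\in B}\beta'$. This is the stated ``if'' direction.

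\emph{The ``only if'' direction.} This requires $\gamma\subset\beta$, and the natural route is to show that $\beta$ is itself a partial action, so that $\beta\in B$. Two of the axioms are routine. (PA0) is the empty chain. (PA2) holds by concatenating a chain for $(h,x)$ with a chain for $(g,\beta(h,x))$.

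The main obstacle is (PA1). Given a chain from $x$ to $y=\beta(g,x)$, we must produce a chain from $y$ back to $x$ with product $g^{-1}=g_1^{-1}\cdots g_k^{-1}$, and every step of that chain must be a pair of $R$. The obvious reversed chain has steps $(g_i^{-1},x_{i+1})$. Reading the statement literally, this is only available if these reversed pairs are themselves in $R$, or are products along $R$-chains.

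So at this step I would check that the generating set used later in the paper has this property.
\begin{itemize}
\item For rule (a), the pair $(\braid_i,u)$ with $t_i\bot t_{i+1}$ satisfies $\braid_i^{-1}.\braid_i.u=u$ and $\braid_i^{-1}.u'=\braid_i.u'$ on commuting adjacent factors. So I would take $R$ symmetric there, containing both $(\braid_i,u')$ and $(\braid_i^{-1},u')$.
\item For rule (b), with $\C=\braid_1\cdots\braid_{n-1}$, I would likewise include the pairs $(\C^{-1},\C.u)$ in $R$.
\end{itemize}
In general, I would state and use the hypothesis that $R$ is closed under $(g,x)\mapsto(g^{-1},\alpha(g,x))$. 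Under that hypothesis (PA1) holds immediately, so $\beta\in B$ and $\gamma\subset\beta$. Combined with the previous paragraph this gives $\gamma=\beta$, which is the claimed description.

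Without that hypothesis, the literal ``only if'' can fail. Indeed, $\gamma$ must satisfy (PA1), so for any $(g,x)\in R$ the value $\gamma(g^{-1},\alpha(g,x))$ is defined, while no forward $R$-chain need exist from $\alpha(g,x)$ with product $g^{-1}$. I would therefore record this closure condition explicitly as part of the argument, and check that it holds in the application to $\eqab$.
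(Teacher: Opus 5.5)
Your $\beta$ and your proof of the ``if'' direction are the paper's argument. Your objection at (PA1) is correct, and it exposes a genuine error in the paper. The paper handles (PA1) by reversing the chain, setting $h_i=g_{k-i+1}^{-1}$ and $y_i=x_{k-i+2}$. It then simply asserts $(h_i,y_i)\in R$, which does not follow from $(g_{k-i+1},x_{k-i+1})\in R$.

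The statement is false as written. Take $G=\mathbb{Z}$ acting on $X=\mathbb{Z}$ by translation, with $R=\{(1,0)\}$. By (PA1), $\gamma(-1,1)$ must be defined. But no pair of $R$ is based at $1$, so the only chain starting at $1$ is the empty one, whose product is $0\neq -1$. Some closure hypothesis on $R$ is therefore necessary, and under yours the argument is complete.

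The weaker condition you mention in passing is the better one to record: for every $(g,x)\in R$, the reversed pair $(g^{-1},\alpha(g,x))$ is realized by a forward $R$-chain. (PA1) for $\beta$ then follows by substituting such a chain for each reversed step. This matters for the one place the paper uses the ``only if'' direction: the lemma producing some $\phi(w,\sigma)\in\Omega$ for every $\sigma\in\Sym_n$, where $R=R_\Gamma\cup R_C$.
\begin{itemize}
\item The reversed pair $((i\ i+1),f\circ(i\ i+1))$ lies again in $R_\Gamma$.
\item The reversed pair $(C^{-1},f\circ C^{-1})$ is realized by $n-1$ steps of $R_C=\{C\}\times\orderings(\Gamma)$, since $C^{n}=1$.
\end{itemize}
Your stronger hypothesis fails for $R_C$ when $n\ge 3$. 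It also matters that this chain uses only $(i\ i+1)$ and $C$, because each step is then lifted to $\braid_i$ or $\C$. So that application is fine without modifying $R$.

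In $B_n$, by contrast, the literal statement really does fail for $R_\bot$ and $R_\C$. Forward chains only produce positive braids, while $\gamma(\braid_i^{-1},\braid_i.u)$ is defined by (PA1). Your symmetrization of $R$ (equivalently, allowing reversed steps) is what justifies describing $\toas$, $\tobs$ and $\eqab$ through $\domain(\gamma_a)$, $\domain(\gamma_b)$ and $\domain(\gamma)$. Theorem~\ref{thm:always_eqab} is unaffected either way.
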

        \begin{proof}
            Assume first that such sequences exist. Then by (PA2) repeatedly, we have that $\gamma(g, x)$ is defined.\\
            For the converse, let $\beta$ be the sub-map of $\alpha$ defined on the subset of $G \times X$ consisting of all pairs $(g,x)$ such that there exists such sequences. We need to prove that $\domain(\gamma) \subset \domain(\beta)$. But since $R \subset \domain(\beta)$, by definition of $\gamma$, it is enough to prove that $\beta$ is a sub-partial action of $\alpha$.\\
            (PA0) For all $x \in X$, the sequence of length $k = 0$ shows that $\beta(1,x)$ is defined and equal to $x$.\\
            (PA1) Let $x \in X$ and $g \in G$ such that $y = \beta(g,x)$ is defined. Then, there exists a sequence $g_1, \ldots, g_k$ in $G$ and $x_1, \ldots, x_{k+1}$ in $X$ such that $x_1 = x$, $g = g_k\ldots g_2 g_1$, and for all $1\le i \le k$, $x_{i+1} = \alpha(g_i, x_i)$ and $(g_i, x_i) \in R$. Define now the sequence $h_1, \ldots, h_k$ in $G$ and $y_1, \ldots, y_{k+1}$ in $X$ by $h_i = g_{k-i+1}^{-1}$ and $y_i = x_{k-i+2}$ for all $1 \le i \le k+1$. Then we have $g^{-1} = h_k \ldots h_2 h_1$, and for all $1 \le i \le k$, $y_{i+1} = \alpha(h_i, y_i)$ and $(h_i, y_i) \in R$. Thus, $\beta(g^{-1}, y)$ is defined and equal to $x$.\\
            (PA2) Let $x \in X$ and $g,h \in G$ such that $y = \beta(h,x)$ and $z = \beta(g,y)$ are defined. Then, there exists a sequence $h_1, \ldots, h_k$ in $G$ and $x_1, \ldots, x_{k+1}$ in $X$ such that $x_1 = x$, $h = h_k \ldots h_2 h_1$, and for all $1\le i \le k$, $x_{i+1} = \alpha(h_i, x_i)$ and $(h_i, x_i) \in R$. Similarly, there exists a sequence $g_1, \ldots, g_l$ in $G$ and $y_1, \ldots, y_{l+1}$ in $X$ such that $y_1 = y$, $g = g_l \ldots g_2 g_1$, and for all $1\le i \le l$, $y_{i+1} = \alpha(g_i, y_i)$ and $(g_i, y_i) \in R$. Define now the sequence $f_1, \ldots, f_{k+l}$ in $G$ and $z_1, \ldots, z_{k+l+1}$ in $X$ by: for all $1 \le i \le k$, $f_i = h_i$ and $z_i = x_i$, and for all $1 \le j \le l$, $f_{k+j} = g_j$ and $z_{k+j+1} = y_{j+1}$. Then we have $gh = f_{k+l} \ldots f_2 f_1$, and for all $1 \le i \le k+l$, $z_{i+1} = \alpha(f_i, z_i)$ and $(f_i, z_i) \in R$. Thus, $\beta(gh, x)$ is defined and equal to $z$.
        \end{proof}

    \subsection{Permutation actions on trees}\label{subsec:partial_actions_and_trees}
        The main combinatorial lemma that we will use is the following. It states that one can order the vertices of a forest in any way by using only transpositions of consecutive vertices that are not connected in the graph, together with the cycle that takes the last vertex to the first position.
        \begin{lemma}\label{lem:partial_action_tree}
            Let $n\ge 1$ and let $\Gamma$ be a simple graph with $n$ vertices, and let $\orderings(\Gamma)$ the set of all bijection from $\{1, \ldots, n\}$ to $V(\Gamma)$. Let $G = \Sym_n$ be the symmetric group on $n$ elements. Denote by $\alpha$ the group action of $G$ on $\orderings(\Gamma)$ defined by: for all $\sigma \in \Sym_n$ and $f \in \orderings(\Gamma)$, $\sigma.f = \alpha(\sigma, f) = f \circ \sigma^{-1}$.\\
            Let $R_{\Gamma} = \{(f,(i\ i+1)) | (f(i), f(i+1)) \notin E(\Gamma)\}$. Then, let $R_C = \{C\}\times \orderings(\Gamma)$ where $C$ is the $n$-cycle $(1\ 2\ \ldots \ n)$. Let $\gamma$ be the sub-partial action of $\alpha$ generated by $R_{\Gamma} \cup R_C$.\\
            If $\Gamma$ is a forest, then $\gamma = \alpha$, i.e. $\gamma$ is the smallest sub-partial action of $\alpha$ whose domain contains $R_{\Gamma} \cup R_C$.
        \end{lemma}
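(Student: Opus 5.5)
By Proposition \ref{prop:generated_partial_action}, $\gamma = \alpha$ amounts to a connectivity statement. Consider the graph $\mathcal{G}$ on $\orderings(\Gamma)$ with an edge $f \to (i\ i+1).f$ whenever $(f(i),f(i+1)) \notin E(\Gamma)$, and an edge $f \to C.f$ for every $f$. We must show $\mathcal{G}$ is connected. Indeed, if $f$ and $f'$ are joined by a path, the labels of the path multiply to the unique $\sigma$ with $f' = f\circ\sigma^{-1}$. So $\gamma(\sigma,f)$ is defined for all pairs, since the action $\alpha$ is simply transitive. Moves of type $C$ are invertible within $\gamma$ by (PA1), so paths may use $C^{-1}$ as well. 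Thus the allowed operations are: rotating the word $f(1)f(2)\cdots f(n)$ cyclically in either direction, and swapping two adjacent non-adjacent-in-$\Gamma$ vertices in linear (non-cyclic) position.

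I would prove connectivity by induction on $n$, the case $n\le 2$ being trivial (for $n=2$, $C$ is the transposition). Since $\Gamma$ is a forest, pick a leaf or isolated vertex $v$, with at most one neighbour $w$. Set $\Gamma' = \Gamma\setminus v$, again a forest.

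The first step is to show every ordering can be brought to one with $v$ in last position. Rotations alone achieve this.

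The second step is to show any two orderings with $v$ last, say $(x_1,\dots,x_{n-1},v)$, are connected. By induction, the orderings of $\Gamma'$ are connected under its moves, so it suffices to simulate each move of $\Gamma'$ on $(x_1,\dots,x_{n-1})$ while keeping $v$ at the end.
\begin{itemize}
\item An admissible swap of adjacent $x_i,x_{i+1}$ is literally an allowed swap in $\Gamma$.
\item The rotation $C'$ sends $(x_1,\dots,x_{n-1})$ to $(x_{n-1},x_1,\dots,x_{n-2})$. We must realise $(x_1,\dots,x_{n-1},v)\mapsto(x_{n-1},x_1,\dots,x_{n-2},v)$.
\end{itemize}

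The rotation is the main obstacle. If $x_{n-1}\neq w$, swap $x_{n-1}$ and $v$ to get $(\dots,x_{n-2},v,x_{n-1})$, then rotate once with $C$, giving $(x_{n-1},x_1,\dots,x_{n-2},v)$. If $x_{n-1} = w$, rotate first to get $(v,x_1,\dots,x_{n-1})$ and then move $v$ rightwards by swaps. This works as long as $v$ does not meet $w$, but $w$ is at the end, so $v$ reaches $(x_1,\dots,x_{n-2},v,w)$. That is the same as the target rotated, since rotating $(x_1,\dots,x_{n-2},v,w)$ once gives $(w,x_1,\dots,x_{n-2},v)$, which is exactly $C'$ applied with $v$ last. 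Here $w=x_{n-1}$, and $v$ commutes past $x_1,\dots,x_{n-2}$ because $w$ is its only neighbour.

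Because this simulation works in both cases, every $\Gamma'$-move lifts, and the induction closes. Isolated $v$ is the easier subcase, where all swaps with $v$ are allowed. The forest hypothesis enters exactly in guaranteeing a vertex with at most one neighbour at every stage of the induction.
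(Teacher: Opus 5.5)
Your proof is correct, but it is organised differently from the paper's.

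The paper inducts on $|E(\Gamma)|$.
\begin{itemize}
\item The base case is the edgeless graph. There every adjacent transposition is admissible at every ordering, and these transpositions generate $\Sym_n$.
\item In the inductive step it takes a leaf $v$ with neighbour $u$. It shows directly that each swap of $u$ and $v$ sitting in positions $i,i+1$ already lies in $\domain(\gamma)$: rotate so that $v$ is first and $u$ last, slide $v$ to position $n-1$, rotate back, and check that the composite permutation is $(i\ i+1)$.
\item The edge $uv$ can then be deleted, because $R_{\Gamma\setminus uv}=R_\Gamma\cup R(u,v)\subset\domain(\gamma)$.
\end{itemize}

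You instead use the freeness of $\alpha$ to replace $\gamma=\alpha$ by connectivity of the move graph on $\orderings(\Gamma)$. You then induct on the number of vertices: you delete the leaf and simulate the $(n-1)$-cycle of $\Gamma\setminus v$ with $v$ parked in last position. Both of your cases check out. In the case $x_{n-1}=w$, the last step is one more application of $C$ to $(x_1,\dots,x_{n-2},v,w)$.

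The key mechanism is the same in both proofs. A leaf slides past everything except its unique neighbour, and rotations move that neighbour out of the way.

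The paper's edge-by-edge version isolates a local statement: a leaf edge imposes no constraint on $\gamma$. It ends on the free case, where the conclusion is immediate in the language of domains.

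Your connectivity reformulation lets you forget the group labels altogether. This avoids the explicit computation of the composite permutation that the paper must carry out, where the exponents of $C$ need care. It also gives a transparent recursive procedure for reaching any ordering.
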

        \begin{proof}
            Let $\gamma \subset \alpha$ such that $R_{\Gamma}\cup R_C \subset \domain(\gamma)$\\
            Notice first that for all $f$ in $\orderings(\Gamma)$, using (PA2) repeatedly with the fact that  $R_C \subset \domain(\gamma)$, we have that for all $k\ge 0$, $(C^k, f) \in \domain(\gamma)$, and using (PA1), gives the same for negative $k$.\\
            The proof is by induction on $|E(\Gamma)|$. Assume first that $|E(\Gamma)| = 0$, i.e. $E(\Gamma) = \emptyset$. Then for all $f \in \orderings(\Gamma)$ and all $i \in \{1, \ldots, n-1\}$, $(f(i), f(i+1)) \notin E(\Gamma)$, so $((i\ i+1), f) \in R_{\Gamma} \subset \domain(\gamma)$. Let $(\sigma, f) \in \orderings(\Gamma) \times \Sym_n$. We can write $\sigma$ as a product of transpositions $(i\ i+1)$: $\sigma = (i_1\ i_1+1)(i_2\ i_2+1) \cdots (i_k\ i_k+1)$. By applying (PA2) repeatedly, we get that $(\sigma, f) \in \domain(\gamma)$, so $\gamma = \alpha$.\\
            Assume now that $|E(\Gamma)| > 0$. Since $\Gamma$ is a forest with at least an edge, it has a leaf, i.e. a vertex of degree $1$. Let $v$ be a leaf of $\Gamma$, and let $u$ be the unique vertex such that $(u,v) \in E(\Gamma)$. Let $R(u,v) = \{f, (i\ i+1) | \{f(i), f(i+1)\} = \{u,v\}\}$. We will show that $R(u,v) \subset \domain(\gamma)$. Let $((i\ i+1), f) \in R(u,v)$. Assume first that $f(i) = u$ and $f(i+1) = v$ is the leaf. Then by the preliminary remark, $(C^{n-i}, f) \in \domain(\gamma)$. Let $f' = \alpha(C^{n-i}, f) = f \circ C^{i-n}$. We have $f'(n)=f(C^{i-n}(n))=f(i)=u$ and $f'(1) = f(C^{i-n}(1)) = f(i+1) = v$. Since $v$ is a leaf and $u$ is the unique vertex adjacent to $v$, we have $(u, f'(j)) \notin E(\Gamma)$ for all $2 \le j \le n-1$. Let $f'_1=f'$, and define successively, for $1 \le j \le n-2$: $f'_{j+1} = \alpha((j\ j+1), f'_j(j))$. By construction, $f'_j(j)= v$ is the leaf and $f'_j(n)=u$ so $(f'_j(j), f'_j(j+1)) \notin E(\Gamma)$, thus $((j\ j+1), f'_j) \in R_{\Gamma} \subset \domain(\gamma)$. Then, by applying (PA2) repeatedly, we have $(C', f') \in \domain(\gamma)$, where $C' = (n-2\ n-1)\ldots (1\ 2) = (n-1\ n-2\ \ldots\ 1) = (n-1\ n)C^{-1}$. Then by (PA2) again, we have $(C'\circ C^{n-i}, f) \in \domain(\gamma)$. Denote $f'' = \alpha(C' \circ C^{n-i}, f)$, and use the preliminary remark again to get that $(C^{-i+1}, f'') \in \domain(\gamma)$. Finally, by (PA2) again, we have $(C^{-i+1} \circ C' \circ C^{n-i}, f)$ in $\domain(\gamma)$. But we have $C^{-i+1} \circ C' \circ C^{n-i} = C^{-i+1} \circ (n-1\ n) \circ C^{-i+1} = (i\ i+1)$. Thus, we have shown that $((i\ i+1), f) \in \domain(\gamma)$ in the case where $f(i) = u$ and $f(i+1) = v$. The case where $f(i) = v$ and $f(i+1) = u$ reduces to the previous one by using (PA1). Thus, we have shown that $R(u,v) \subset \domain(\gamma)$. Now, let $\Gamma' = (V(\Gamma), E(\Gamma) \setminus \{(u,v)\})$ be the graph obtained from $\Gamma$ by removing the edge $(u,v)$. Since $\Gamma'$ is a forest, and $R(\bar{\Gamma'}) = R_{\Gamma} \cup R(u,v) \subset \domain(\gamma)$, then by induction hypothesis, we have $\gamma = \alpha$.
        \end{proof}
        \begin{remark}
            The previous lemma is not true if $\Gamma$ is not a forest, i.e. has a cycle. Indeed, if a cycle of the graph appears in order, then transpositions will never break it and the cycle action will only rotate it, so it will always appear up to rotation with the same orientation.
        \end{remark}

\section{Results in type $A$}
    In type $A_n$, the Coxeter group is the symmetric group $W = \Sym_{n+1}$, the set of reflections $T$ is the set of transpositions, and the Coxeter elements are the $(n+1)$-cycles. We can view the transposition $(i\ j)$ as the edge $\{i, j\}$ in the complete graph where the vertices are the integers from $1$ to $n+1$. A set of transpositions thus corresponds to a graph subgraph of the complete graph on the vertex set $[1, n+1]$.
    \begin{definition}
        A set $S$ of $n$ transpositions is called a (Hamiltonian) \defit{path}, if all vertices have degree $2$ except two vertices of degree $1$, called \defit{endpoints}. A path is characterized by the sequence of vertices from one endpoint to the other one up to reversion. The path $\{(a\ x_1), (x_1\ x_2), \ldots, (x_{n-1}\ b)\}$ will be denoted $S = a \longline x_1 \longline x_2 \longline \cdots \longline x_{n-1} \longline b$.
    \end{definition}
    \begin{proposition}\label{prop:cam-set_conjugation}
        Let $S= x_0 \longline x_1 \longline \cdots \longline x_n$. Let $\sigma \in W$ be a permutation and $S' = {}^{\sigma}S = \{{}^\sigma s | s \in S\}$ be the cam-set obtained by conjugating $S$ by $\sigma$. Then $S'=\sigma(x_0) \longline \cdots \longline \sigma(x_n)$.
    \end{proposition}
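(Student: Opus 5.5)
The plan is to compute the conjugate of each transposition in $S$ directly, and then read off the path structure of the resulting set. The key identity is the standard fact that conjugating a transposition relabels its support by $\sigma$: for all distinct $i,j$,
\[
{}^{\sigma}(i\ j) = \sigma (i\ j) \sigma^{-1} = (\sigma(i)\ \sigma(j)).
\]
To verify it, evaluate $\sigma (i\ j)\sigma^{-1}$ on $\sigma(i)$, on $\sigma(j)$, and on $\sigma(k)$ for $k\notin\{i,j\}$. These points are sent to $\sigma(j)$, $\sigma(i)$ and $\sigma(k)$ respectively. Since $\sigma$ is a bijection, every point of $\{1,\ldots,n+1\}$ has the form $\sigma(k)$ for some $k$, so this determines the permutation completely.

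Next, apply this identity to each element of $S=\{(x_0\ x_1),(x_1\ x_2),\ldots,(x_{n-1}\ x_n)\}$. This gives
\[
{}^{\sigma}S=\{(\sigma(x_0)\ \sigma(x_1)),(\sigma(x_1)\ \sigma(x_2)),\ldots,(\sigma(x_{n-1})\ \sigma(x_n))\}.
\]
The $x_k$ are distinct, since they are the $n+1$ vertices of a Hamiltonian path, and $\sigma$ is injective, so the $\sigma(x_k)$ are also distinct. Hence the set above consists of $n$ distinct transpositions. Viewed as a graph on $\{1,\ldots,n+1\}$, its vertices are $\sigma(x_0),\ldots,\sigma(x_n)$ joined consecutively. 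The endpoints $\sigma(x_0)$ and $\sigma(x_n)$ have degree $1$ and all other vertices have degree $2$. By definition this is the path $\sigma(x_0) \longline \cdots \longline \sigma(x_n)$.

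Conceptually, conjugation by $\sigma$ is the graph isomorphism of the complete graph induced by the relabelling $\sigma$, so it maps a path to a path with the relabelled vertex sequence. No step here is a real obstacle; the only point needing care is the conjugation convention ${}^\sigma t=\sigma t\sigma^{-1}$, which determines whether the result is expressed with $\sigma$ or $\sigma^{-1}$. With the paper's convention the labels are $\sigma(x_k)$.
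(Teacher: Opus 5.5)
Your proof is correct and follows the same route as the paper: conjugate each edge $(x_{j-1}\ x_j)$ to $(\sigma(x_{j-1})\ \sigma(x_j))$ and read off the relabelled path. Beyond the paper's one-line argument, you also verify the conjugation identity and the distinctness and degree conditions explicitly.
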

    \begin{proof}
        Let $t_j = (x_{j-1}\ x_j)$ for all $1 \le j \le n$. Then, ${}^\sigma t_j = (\sigma(x_{j-1})\ \sigma(x_j))$, so $S'= \sigma(x_0) \longline \cdots \longline \sigma(x_n)$.
    \end{proof}
    In type A, a cam-set always has two particular orderings, corresponding to the two possible orientations of the path, that should not be confused with an ordering of the cam-set that define a product and thus a Coxeter element.
    \begin{definition}\label{defi:bi_colored_cam-set}
        Assume $n > 0$. A \defit{bi-colored cam-set} is a cam-set $S$ with a partition $\{A, B\}$ of the vertices, i.e. $[1, n+1] = A\sqcup B$, such that each endpoint of the path is in a different part.\\
        Let $(S, \{A, B\})$ be a bi-colored cam-set. Write $S = x_0 \longline \cdots \longline x_n$ with $a=x_0$ the endpoint in $A$.
        There is a natural ordering on $A$ (resp. on $B$) that starts with the endpoint $a$ (resp. $b$) and reads the vertices in the order they appear in the path. Precisely, we denote by $a=a_1, a_2,  \ldots a_k$ the subsequence of $(x_i)_{0\le i \le n}$ of vertices in $A$. Symmetrically, let $b=b_1, b_2, \ldots, b_{n+1-k}$ be the reversed subsequence of $(x_i)_{0\le i \le n}$ of vertices in $B$, i.e. when we traverse the path from $b$ to $a$. Then we define the Coxeter element associated with the bi-colored cam-set by $c(S, \{A, B\}) = (a_1\ a_2\ \ldots\ a_k\ b_1\ b_2\ \ldots\ b_{n+1-k})$.
    \end{definition}
    \begin{remark}
        The classic way of choosing a Coxeter element after choosing a chamber is by orienting edges of the Coxeter diagram, i.e. taking a Dynkin quiver (see Section \ref{sec:orientation_diagram}).  The bi-coloring here is equivalent. In the Coxeter diagram, vertices are the generating reflections, thus correspond to edges in our setting. Given a non extremity vertex $x$, the two edges adjacent to $x$ correspond to two vertices in the Coxeter diagram, linked by an edge. Then $x$ is in $A$ if and only if this edge points towards extremity $a$.
    \end{remark}
    In other words, $c$ is defined by the following rules:
    $$\begin{array}{rrcl}
        \text{ for } 1 \le i < k: & c(a_i) & = & a_{i+1}\\
        \text{ for } i = k: & c(a_k) & = & b_1\\
        \text{ for } 1 \le i < n+1-k: & c(b_i) & = & b_{i+1}\\
        \text{ for } i = n+1-k: & c(b_{n+1-k}) & = & a_1\\
    \end{array}$$
    \begin{remark}\label{rem:bi_coloring_from_coxeter}
        It has the property that $A = \{a, c(a), \ldots c^{k-1}(a)\}$, $b = c^k(a)$, and $B = \{b, c(b), \ldots, c^{n-k}(b)\}$ so the bi-coloring can be recovered from the Coxeter element.
    \end{remark}
    An easy way to visualize this Coxeter element is to write the path in zigzag, with all vertices of $A$ on the bottom row, and all vertices of $B$ on the top row, and then read the Coxeter element in counter-clockwise direction. For example, if element of $\red{A}$ are \red{red} and those of $\blue{B}$ are \blue{blue}:
    $$\begin{tikzcd}
        & {\blue{b_4}} & {\blue{b_3}} && {\blue{b_2}} && {\blue{b}} \\
        {\red{a}} &&& {\red{a_2}} && {\red{a_3}}
        \arrow["{t_2}"', shift right, no head, from=1-2, to=1-3]
        \arrow[shift right=2, curve={height=6pt}, maps to, from=1-2, to=2-1]
        \arrow[shift right, maps to, from=1-3, to=1-2]
        \arrow["{t_3}", no head, from=1-3, to=2-4]
        \arrow[maps to, from=1-5, to=1-3]
        \arrow["{t_5}", no head, from=1-5, to=2-6]
        \arrow[maps to, from=1-7, to=1-5]
        \arrow["{t_1}"', no head, from=2-1, to=1-2]
        \arrow[maps to, from=2-1, to=2-4]
        \arrow["{t_4}", no head, from=2-4, to=1-5]
        \arrow[maps to, from=2-4, to=2-6]
        \arrow["{t_6}", no head, from=2-6, to=1-7]
        \arrow[shift right=2, curve={height=6pt}, maps to, from=2-6, to=1-7]
    \end{tikzcd}$$
    Now, the key point is that we can associate a bi-coloring to any ordered cam-set in a way that the Coxeter element associated to the bi-colored cam-set is the Coxeter element of the ordered cam-set.\\
    \begin{definition}
        Let $(S, \prec)$ be an ordered cam-set. Write the path $S$ as follows: 
        $$\begin{tikzpicture}[every node/.style={font=\large}]
        \node (a) at (0,0) {$a$};
        \node (x1) at (2,0) {$x_1$};
        \node (xn1) at (6,0) {$x_{n-1}$};
        \node (b) at (8,0) {$b$};

        \draw (a) -- (x1) node[midway, above] {$t_1$};
        \draw (xn1) -- (b) node[midway, above] {$t_n$};

        \draw (x1) -- (3.2,0);
        \node at (4,0) {$\cdots$};
        \draw (4.8,0) -- (xn1);
        \end{tikzpicture}$$
        Let $a$ and $b$ be the endpoints of $S$. Let $(t_1, t_2, \ldots, t_n)$ be the sequence of the edges of $S$ when we traverse the path from $a$ to $b$. For all $1\le j \le n-1$, let $x_j$ be the common vertex of $t_j$ and $t_{j+1}$. Let $A_{\prec} = \{a\} \cup \{x_j | t_j \prec t_{j+1}\}$ and $B_{\prec} = \{b\} \cup \{x_j | t_{j+1} \prec t_j\}$. Then $(S, \{A_{\prec}, B_{\prec}\})$ is the bi-colored cam-set associated with $(S, \prec)$.
    \end{definition}
    \begin{proposition}
        Assume $n > 0$. Let $(S, \prec)$ be an ordered cam-set, and $(S, \{A_{\prec}, B_{\prec}\})$ be the associated bi-colored cam-set. Then $c(S, \prec) = c(S, \{A_{\prec}, B_{\prec}\})$.
    \end{proposition}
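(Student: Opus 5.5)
We prove the statement by induction on $n$, using the path structure of $S$ and tracking the image of each vertex under the product $c(S,\prec)=t_{\pi(1)}t_{\pi(2)}\cdots t_{\pi(n)}$. Here $t_{\pi(1)}\prec\cdots\prec t_{\pi(n)}$ lists the edges in $\prec$-order, and permutations act on the left, so the rightmost transposition acts first. We do not need to compute $c(S,\prec)$ on every vertex. Both sides are $(n+1)$-cycles, or at least permutations, so it suffices to check that $c(S,\prec)$ satisfies the four defining rules listed after Definition \ref{defi:bi_colored_cam-set}: $c(a_i)=a_{i+1}$, $c(a_k)=b_1$, $c(b_i)=b_{i+1}$ and $c(b_{n+1-k})=a_1$.

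The direct computation goes as follows. Let $x=x_j$ be a vertex with $x_0=a$ and $x_n=b$; it lies in the two edges $t_j$ and $t_{j+1}$ (only one edge at an endpoint). Apply the transpositions in order, rightmost first. The first of $t_j,t_{j+1}$ to be applied moves $x$ to a neighbour. Afterwards $x$ keeps moving along the path for as long as the next edge along the path in that direction is applied later than the previous one, that is, earlier in the product.

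\begin{itemize}
\item If $t_{j+1}\prec t_j$ (so $x\in B_\prec$), then $t_j$ is applied first and $x$ travels towards $a$. It crosses $t_j,t_{j-1},\ldots$ as long as each subsequent edge lies further left in the product. It stops at the first vertex $x_m$ with $m<j$ such that $t_{m+1}\prec t_m$ fails, i.e.\ $t_m\prec t_{m+1}$; this means $x_m\in A_\prec$, or $x_m=a$ if the travel reaches the endpoint.
\item Symmetrically, if $x\in A_\prec$ then $x$ travels towards $b$ and stops at the next $B$-vertex, or at $b$.
\end{itemize}

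Hence $c$ maps each vertex of $B$ to the next vertex of $B$ in the direction of $a$. This is exactly $b_i\mapsto b_{i+1}$, since the $b_i$ are listed from $b$ towards $a$. When no further $B$-vertex exists, the image is $a=a_1$.

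The point needing care is the endpoint behaviour and the mixed cases. The statement above as written is not quite right: an $A$-vertex travelling right stops at a vertex $x_m$ where the motion breaks, which means $t_{m+1}\prec t_m$, i.e.\ $x_m\in B$. That vertex is not the next $A$-vertex, which contradicts $c(a_i)=a_{i+1}$. So the true stopping rule must be pinned down by an honest check of the order condition: after crossing $t_{m}$, the edge $t_{m+1}$ is applied next only if it appears to the left of $t_m$ in the product, i.e.\ $t_m\prec t_{m+1}$. With this corrected, $A$-vertices travel right through consecutive $A$-vertices until the stopping vertex $x_m$ satisfies $t_{m+1}\prec t_m$. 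The main obstacle is therefore a careful reconciliation of the conventions (left action, product order versus application order, and the reversed listing of $B$) so that the travelling rule reproduces exactly $c(a_i)=a_{i+1}$, $c(a_k)=b_1$, and the analogous rules for $B$.

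If this bookkeeping proves too slippery, we fall back on the induction. Remove the edge $t$ that is $\prec$-minimal or $\prec$-maximal among the endpoint edges $t_1,t_n$; we would need to argue that one can be chosen, or else use a leaf of the path, which is always an endpoint edge. Then multiply by $t$ on the appropriate side and check that inserting the new endpoint into the cycle $(a_1\ldots a_k\,b_1\ldots)$ matches left or right multiplication by the transposition. This is a one-line cycle identity, since $(p\ q)(q\ \cdots)$ splices $p$ next to $q$.
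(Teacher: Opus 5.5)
\paragraph{There is a genuine gap.} Neither of your two arguments is completed, and the main one contains an error.

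\paragraph{The direct computation.} It ends with a stopping rule that you yourself note contradicts $c(a_i)=a_{i+1}$, and your ``correction'' is wrong again. The cause is one inequality that you keep translating backwards. With your convention (rightmost factor acts first, which is also the paper's), the product is $s_1\cdots s_n$ with $s_1\prec\cdots\prec s_n$. So ``further left in the product'' means $\prec$-\emph{smaller}. After the tracked point crosses an edge $e$, it crosses the next edge $e'$ along the path iff $e'$ is applied later, i.e.\ iff $e'\prec e$.
\begin{itemize}
\item Moving towards $b$ and arriving at $x_m$ through $t_m$, the point continues iff $t_{m+1}\prec t_m$, i.e.\ iff $x_m\in B_\prec$.
\item Moving towards $a$ and arriving at $x_m$ through $t_{m+1}$, it continues iff $t_m\prec t_{m+1}$, i.e.\ iff $x_m\in A_\prec$.
\item When the point stops at $x_m$, both edges at $x_m$ have already been applied, so it never moves again.
\end{itemize}
Hence an $A$-vertex walks towards $b$ through $B$-vertices and stops at the next $A$-vertex, or at $b$. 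A $B$-vertex walks towards $a$ through $A$-vertices and stops at the next $B$-vertex, or at $a$. These are exactly $c(a_i)=a_{i+1}$, $c(a_k)=b$, $c(b_i)=b_{i+1}$ and $c(b_{n+1-k})=a$.

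The inequality is inverted in each of the following places:
\begin{itemize}
\item Your first bullet states its stopping condition backwards: as written, the walk stops at an $A$-vertex, although the conclusion you draw from it is correct.
\item Your second bullet has $A$-vertices stopping at the next $B$-vertex.
\item In the ``corrected'' version, ``appears to the left of $t_m$ in the product, i.e.\ $t_m\prec t_{m+1}$'' is inverted once more.
\end{itemize}
With this one translation fixed, your route is a valid alternative to the paper's. It computes $c(S,\prec)$ vertex by vertex in a single pass, with no induction.

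\paragraph{The fallback induction.} The gap here is at ``multiply by $t$ on the appropriate side''. Choosing $t$ extremal among the two endpoint edges $t_1,t_n$ does not make it extremal in the product. Nothing you wrote lets you move $t$ to one end of the word.

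The missing idea, which is the heart of the paper's proof, is commutation. The paper uses $t=t_n$, and no choice between $t_1$ and $t_n$ is needed. A leaf edge $t_n=(x_{n-1}\ b)$ commutes with every edge of $S$ except its neighbour $t_{n-1}$. Write $c(S,\prec)=c_1t_nc_2$ with $c_1c_2=c(S',\prec)=c'$.
\begin{itemize}
\item If $t_{n-1}\prec t_n$, then $t_{n-1}$ lies in $c_1$, so $t_n$ commutes with $c_2$ and $c=c't_n$.
\item Otherwise $t_{n-1}$ lies in $c_2$, so $t_n$ commutes with $c_1$ and $c=t_nc'$.
\end{itemize}

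You also need to check one more thing. The same comparison decides whether $x_{n-1}$, the $B'$-endpoint of $S'$, becomes an $A$-vertex or a $B$-vertex of $S$. This ensures your splicing identity inserts $b$ immediately after $x_{n-1}$ in the first case and immediately before it in the second, as Definition~\ref{defi:bi_colored_cam-set} requires. With these two points added, your fallback is exactly the paper's argument.
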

    \begin{proof}
        We take notations from the previous definitions, and we set $\red{A = A_{\prec}}$ and $\blue{B = B_{\prec}}$. Let $c = c(S, \prec)$. The proof is by induction on $n$. If $n=1$, then $S = \{(a\ b)\}$, and $c(S, \prec) = (a\ b) = c(S, \{A, B\})$.\\
        Let $n \ge 2$. If we remove $t_n$ from $S$, the remaining edges form a path $S' = a \longline x_1 \longline \cdots \longline b'$, with $x_{n-1}=b'$, and $(S', \prec)$ with $\prec$ restricted to $S'$ is an ordered cam-set with $n-1$ edges. Let $(S', \{A', B'\})$ be the associated bi-colored cam-set. By induction hypothesis, we have $c' = c(S', \prec) = c(S', \{A', B'\})$. Now, $c=c(S, \prec)$ can be written as $c=c_1t_nc_2$. Moreover, the product $c_1c_2$ is the product of the transpositions in $S'$ in the order given by $\prec$, thus $c_1c_2 = c'$. Write $\red{a_1, \ldots, a_k}$ the natural ordering of $\red{A'}$ and $\purple{b' = b_1}, \blue{b_2, \ldots, b_{n-k}}$ the natural ordering of $\blue{B'}$. Then we have $c' = (\red{a_1\ \ldots\ a_k}\purple{\ b'}\blue{\ b_2\ \ldots\ b_{n-k}})$ and $t_n = (\purple{b'}\blue{\ b})$. We have now two cases depending on if $\red{b'\in A}$ or $\blue{b' \in B}$.\\
        \[\begin{tikzcd}
            & {\blue{b_4}} & {\blue{b_3}} && {\blue{b_2}} && {\purple{b'}} & {\blue{b}} \\
            {\red{a}} &&& {\red{a_2}} && {\red{a_3}} \\
            {}
            \arrow["{t_2}"', shift right, no head, from=1-2, to=1-3]
            \arrow[shift right=2, curve={height=6pt}, maps to, from=1-2, to=2-1]
            \arrow[shift right, maps to, from=1-3, to=1-2]
            \arrow["{t_3}", no head, from=1-3, to=2-4]
            \arrow[maps to, from=1-5, to=1-3]
            \arrow["{t_5}", no head, from=1-5, to=2-6]
            \arrow[maps to, from=1-7, to=1-5]
            \arrow["{t_7}"', dashed, no head, from=1-7, to=1-8]
            \arrow["{t_1}"', no head, from=2-1, to=1-2]
            \arrow[maps to, from=2-1, to=2-4]
            \arrow["{t_4}", no head, from=2-4, to=1-5]
            \arrow[maps to, from=2-4, to=2-6]
            \arrow["{t_6}", no head, from=2-6, to=1-7]
            \arrow[shift right=2, curve={height=6pt}, maps to, from=2-6, to=1-7]
        \end{tikzcd}\]
        If $\red{b' \in A}$, then $t_{n-1} \prec t_n$, so $t_{n-1}$ appears in $c_1$ and not in $c_2$, thus $t_n$ commutes with $c_2$, and we have $c = c't_n$. Moreover, the bi-coloring of $S$ is obtained in this case by: $\red{A = A'\cup \{b'\}}$ and $\blue{B=B'\setminus \{b'\} \cup \{b\}}$, so $c = c't_n = (\red{a_1\ \ldots\ a_k\ b'}\blue{\ b\ b_2\ \ldots\ b_{n-k}})$ which is $c(S, \{\red{A}, \blue{B}\})$ because $\red{b'}$ is the last element of $\red{A}$.\\
        If $\blue{b' \in B}$, then $t_n \prec t_{n-1}$ so $t_{n-1}$ appears in $c_2$ and not in $c_1$, thus $t_n$ commutes with $c_1$, and we have $c = t_nc'$. Moreover, the bi-coloring of $S$ is obtained in this case by: $\red{A = A'}$ and $\blue{B = B' \cup \{b', b\}}$, so $c = t_nc' = (\red{a_1\ \ldots\ a_k}\blue{\ b\ b'\ b_2\ \ldots\ b_{n-k}})$ which is $c(S, \{\red{A}, \blue{B}\})$ because $\blue{b'}$ is the second element of $\blue{B}$ after $\blue{b}$.
    \end{proof}
    This result shows that for an ordered cam-set $(S, \prec)$, the Coxeter element $c(S, \prec)$ only depends on the sets $A_{\prec}$ and $B_{\prec}$. In particular, if we fix a cam-set $S = \{t_1, \ldots , t_n\}$, where $(t_i)$ is the sequence of edge from $a$ to $b$, then any ordering $\prec$ of $S$ written $s_1\prec \ldots \prec s_n$ corresponds to a permutation $\sigma \in \Sym_n$ such that $t_i = s_{\sigma(i)}$. Then, the Coxeter element only depends on the set $\{i | \sigma(i) < \sigma(i+1)\}$ of indexes $i$ such that $x_i \in A$. On one hand, to see which bi-coloring are possible, we need to compute the image set of the map $\phi:\sigma \mapsto \{i | \sigma(i) < \sigma(i+1)\}$. It is surjective, so all bi-coloring can be obtained from an ordering :
    \begin{proposition}
        Let $\phi: \sigma \in \Sym_n \mapsto \{i | \sigma(i) < \sigma(i+1)\} \in \mathcal{P}([1,n-1])$. Then $\phi$ is surjective.
    \end{proposition}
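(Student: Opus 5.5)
We have to show that every subset $I \subset [1,n-1]$ is the ascent set of some permutation $\sigma \in \Sym_n$. The plan is to build $\sigma$ explicitly, block by block, using decreasing runs. (For $n=1$ the only subset is $\emptyset$, the image of the identity, so assume $n \ge 2$.)

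Fix $I \subset [1,n-1]$. The positions $i \notin I$ are exactly those where we want a descent $\sigma(i) > \sigma(i+1)$. Cut the word $1,2,\ldots,n$ just after each $i \in I$. This splits $[1,n]$ into maximal consecutive intervals $J_1 < J_2 < \cdots < J_r$, where $r = |I|+1$. Inside each interval no position lies in $I$, so all adjacent pairs there must be descents. Each cut point $i \in I$ is the last element of some $J_m$, with $i+1$ the first element of $J_{m+1}$, and must be an ascent.

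Now define $\sigma$ on each block $J_m = [p_m, q_m]$ by the order-reversing bijection onto itself, $\sigma(j) = p_m + q_m - j$. Then:
\begin{itemize}
\item Within $J_m$ the values strictly decrease, so every $i$ with $i, i+1 \in J_m$ is a descent. These are exactly the $i \notin I$.
\item For $i = q_m \in I$, we get $\sigma(i) = p_m \le q_m < p_{m+1} \le \sigma(i+1)$, because $\sigma(J_m) = J_m$ and all of $J_m$ lies below $J_{m+1}$. So $i$ is an ascent.
\end{itemize}
Hence $\phi(\sigma) = \{i \mid \sigma(i) < \sigma(i+1)\} = I$, and $\phi$ is surjective.

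The only point requiring care is the bookkeeping that the blocks stay in increasing order as sets, so that every boundary is an ascent. Reversing each block in place guarantees this, since $\sigma$ preserves each block.

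For the paper's purpose, this shows that every bi-colouring $\{A, B\}$ of a path cam-set $S$ arises as $\{A_\prec, B_\prec\}$ for some ordering $\prec$ of $S$. Indeed, $x_i \in A$ exactly when $i \in \phi(\sigma)$, and the endpoints $a$ and $b$ are forced to lie in $A$ and $B$ respectively.
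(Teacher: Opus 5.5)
Your proof is correct, but it is not the paper's argument. The paper proceeds by induction on $n$. From some $\sigma'\in\Sym_{n-1}$ with ascent set $I\cap[1,n-2]$ it builds $\sigma$ as follows: if $n-1\in I$ it sets $\sigma(n)=n$ and $\sigma(i)=\sigma'(i)$; otherwise it sets $\sigma(n)=1$ and $\sigma(i)=\sigma'(i)+1$. In other words, it appends a new maximum or a new minimum at the end. You instead write down a preimage in one step, by reversing each maximal block $J_m$ in place.

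Your route gives an explicit, canonical preimage. Your $\sigma$ is the longest element of the standard parabolic subgroup $\Sym(J_1)\times\cdots\times\Sym(J_r)\subset\Sym_n$, which is the unique element of minimal length with ascent set $I$. The paper's recursion generally lands elsewhere in the fibre: for $n=3$ and $I=\{1\}$ it gives $231$, while you give $132$.

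The paper's route fits the surrounding proofs better. Read on the path, it says that an ordering of $t_1,\ldots,t_{n-1}$ realising the bi-colouring is extended by making $t_n$ maximal when $x_{n-1}\in A$ and minimal when $x_{n-1}\in B$. This mirrors the case split on $x_{n-1}$ in the preceding proof that $c(S,\prec)=c(S,\{A_\prec,B_\prec\})$.

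One wording slip, harmless for the argument: ``inside each interval no position lies in $I$'' is false as written, since the last element of each $J_m$ with $m<r$ lies in $I$. What you mean, and what your bullets actually use, is that no $i$ with $i,i+1\in J_m$ lies in $I$.
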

    \begin{proof}
        We prove the result by induction on $n$. If $n=1$, there is nothing to prove. Let $n \ge 2$ and let $I \subset [1,n-1]$.
        By induction hypothesis, there exists $\sigma' \in \Sym_{n-1}$ such that $\{i \in [1,n-2]  | \sigma'(i) < \sigma'(i+1)\} = I \cap [1,n-2]$. We construct $\sigma \in \Sym_n$ from $\sigma'$ as follows. If $n-1 \in I$, $\sigma$ must be increasing at the end. We set $\sigma(n) = n$ and for all $1 \le i \le n-1$, we set $\sigma(i) = \sigma'(i)$. If $n-1 \notin I$, $\sigma$ must be decreasing at the end, so we set $\sigma(n) = 1$ and for all $1 \le i \le n-1$, we set $\sigma(i) = \sigma'(i)+1$. In both cases, we have $\phi(\sigma) = I$.
    \end{proof}  
    On the other hand, distinct bi-colorings give distinct Coxeter elements by Remark \ref{rem:bi_coloring_from_coxeter}, so a cam-set can give rise to exactly $2^{n-1}$ distinct Coxeter elements.\\
    To summarize what we have done so far, we can write a \emph{weighted adjacency matrix} $M$ with columns indexed by cam-sets and rows indexed by Coxeter elements, and $M_{c,S}$ is the number of orderings $(S, \prec)$ such that $c(S, \prec) = c$. We have shown that each column has exactly $2^{n-1}$ non-zero entries, with sum $n!$ corresponding to the fiber of the map $\phi$ above. Since Coxeter element are all conjugate, each row must be identical up to permutation of the columns, so each row has the same sum and the same number of non-zero entries. Let's count these numbers and check that the total sum, and total number of non-zero entries match.\\
    If we fix a Coxeter element $c = (x_0\ x_1\ \ldots\ x_n)$, we can ask how many cam-sets $(S, \prec)$ give rise to $c(S, \prec) = c$. Such a cam-set must be bi-colored in a way that splits the sequence (or a rotation of it) into two parts $A$ and $B$. This is equivalent to choosing $a$ and $b$ in the cycle, and writing $c = (a\ a_2\ \ldots\ a_k\ b\ \ldots b_{n+1-k})$, so there is $n(n+1)/2$ such bi-colorings ($a$ and $b$ can be inverted). After choosing $A$ and $B$, the choice of a cam-set corresponds to choosing a path from $a$ to $b$ where $a, a_1, \ldots a_k$ appears in this order and $b_1, \ldots, b_{n+1-k}$ appear in the reverse order. This is equivalent to choosing the $k-1$ intermediate vertices in $A$ among the $n-1$ vertices different from $a$ and $b$. Thus, there are $\binom{n-1}{k-1}$ such cam-sets for a fixed choice of $A$ and $B$. To count the total number of cam-sets giving rise to $c$, we can sum, first on the choice of $a$, then on $k$ the size of $A$, and divide by $2$ to account for the inversion of $a$ and $b$: $1/2\times\sum_{a=1}^{n+1} \sum_{k=1}^n \binom{n-1}{k-1} = (n+1)\times 2^{n-2}$.\\
    We can check that if we multiply the number of cam-sets that can generate a fixed Coxeter element by the number of distinct Coxeter elements, we get $n+1 \times 2^{n-2} \times n! = (n+1)!\times 2^{n-2}$, and it is the same result if we multiply the number of Coxeter elements that can be generated by a fixed cam-set by the number of cam-sets, which is $2^{n-1} \times (n+1)!/2 = (n+1)!\times 2^{n-2}$, so the two counts of the number of non-zero entries in the matrix $M$ match.\\
    For the sum, we have $n!$ orderings per cam-set, and $(n+1)!/2$ cam-sets, so the total sum is $n! \times (n+1)!/2$. On the other hand, we have $n!$ Coxeter elements, so we should have $(n+1)!/2$ ordered cam-sets generating a fixed Coxeter element.

    The link with the previous section is that the first rule $(a)$ does not change the cam-set, nor the sets $A_{\prec}$ and $B_{\prec}$, while the second rule $(b)$ has the effect of swapping the two vertices of $s_n$ in the graph but almost preserving the bi-coloring, which is coherent with the fact that the Coxeter element is preserved. We will write, as in Section \ref{sec:hurwitz_on_dec}, $(S, \prec) \toa (S', \prec')$ if $S=S'$ and $\prec'$ is obtained from $\prec$ by swapping the order of two consecutive transposition that commutes, i.e. two edges with no common endpoint. Relation $(S, \prec) \tob (S', \prec')$ holds if $S'={}^{s_n}S$ where $s_n$ is the maximum of $S$ for $\prec$, and $\prec'$ is obtained by conjugating $\prec$ except for $s_n$ that becomes the minimum for $\prec'$. The reflexive transitive closure of $\toa$ (resp. $\tob$) will is denoted $\toas$ (resp. $\tobs$) and the reflexive transitive closure of the union $\eqab$.
    The goal of the following is to prove:
    \begin{theorem}\label{prop:hurwitz_cam-sets}
        Let $(S, \prec)$ and $(S', \prec')$ be two ordered cam-sets such that $c(S, \prec) = c(S', \prec')$.\\
        Then $(S, \prec) \eqab (S', \prec')$.
    \end{theorem}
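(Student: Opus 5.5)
The plan is to combine an orbit-size count with the combinatorial Lemma \ref{lem:partial_action_tree}, in line with the strategy announced in the introduction. Fix a cameral Coxeter element $c$. By the count above, exactly $(n+1)!/2$ ordered cam-sets $(S,\prec)$ satisfy $c(S,\prec)=c$. Both rules preserve the product, so each $\eqab$-class of such ordered cam-sets lies inside this set. It therefore suffices to show that the $\eqab$-class of a single ordered cam-set $(S,\prec)$ with $c(S,\prec)=c$ has at least $(n+1)!/2$ elements.

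\textbf{Step 1: use Lemma \ref{lem:partial_action_tree} on each cam-set.} Let $\Gamma_S$ be the Coxeter diagram of $S$: vertices are the transpositions of $S$, with an edge between two of them when they do not commute (share an endpoint). This is a path, hence a forest. Rule (a) is exactly the allowed transposition of adjacent non-adjacent vertices. Rule (b) applied $n$ times returns to the cam-set ${}^{c}S$, not to $S$. So the lemma does not apply literally, and I would transport everything back to $S$ as follows.

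Write an ordering as a bijection $f:\{1,\dots,n\}\to S$. Define $\tilde{b}(f)=\kappa_{f(n)}\circ(\text{conjugation by } f(n))$, where $\kappa_{s}$ identifies ${}^{s}S$ with $S$ via $x\mapsto {}^{s}x$, read backwards. In other words, identify the ordered cam-set obtained by rule (b) with the ordering $f\circ C^{-1}$ of the abstract vertex set, with labels tracked through conjugation. Conjugation is a graph isomorphism of Coxeter diagrams, so rules (a) and (b) become exactly the generators $R_\Gamma\cup R_C$ acting on the orderings of the fixed abstract path $\Gamma$. The lemma then says that every ordering of the abstract diagram is reachable. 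Concretely, starting from $(S,\prec)$ we reach ordered cam-sets $({}^{w}S,\prec')$ realising every linear order pattern on the diagram, where $w$ is the product of the reflections moved by rule (b).

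\textbf{Step 2: count the distinct ordered cam-sets reached.} The orbit contains all $nh$ iterates of rule (b), hence the cam-sets ${}^{c^k}S$ for $k=0,\dots,h-1$ with $h=n+1$. It also contains, for each reachable cam-set, every ordering compatible with rule (a). The hard part is to produce enough \emph{distinct} cam-sets. For this I would show that the set of cam-sets reached is the full set of $(n+1)\,2^{n-2}$ paths that produce $c$ (the count from the previous paragraphs), and that on each of them all orderings giving $c$ are reached. By the surjectivity of $\phi$, these orderings are precisely the linear extensions of the fixed acyclic orientation determined by the bi-colouring.

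Rule (a) acts transitively on the linear extensions of a fixed acyclic orientation of a tree: this is the classical fact that commutation classes are linear extensions, and it follows from the $|E|=0$ step argument applied to the poset. So it remains to show that rule (b) moves between all bi-coloured paths giving $c$. Rule (b) with last element $t_n=(x\ y)$ swaps $x$ and $y$ in the path and turns a sink of the orientation into a source, which is a quiver mutation. I would check that repeated source-to-sink mutations, interleaved with rule (a) so that any sink can be placed last, connect all pairs (path, orientation) giving $c$. This reduces to choosing $a,b$ on the cycle of $c$ and choosing which intermediate vertices lie in $A$. Sink-mutations rotate the endpoints $a,b$ along $c$ and shuffle membership in $A$, so I would prove connectivity by induction on $n$, removing the endpoint edge $t_n$ as in the proof of $c(S,\prec)=c(S,\{A_\prec,B_\prec\})$.

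\textbf{Main obstacle.} I expect the connectivity of bi-coloured paths under sink mutation in Step 2 to be the main difficulty. If the direct induction is messy, my fallback is to count the orbit instead. The lemma gives $n!$ orderings in each $C$-orbit of cam-sets, and there are $h$ distinct conjugates ${}^{c^k}S$ (the centraliser of $c$ is $\langle c\rangle$ and stabilises no cam-set unless the power is trivial on it). Dividing by the identification of opposite chambers then gives $n!\cdot h/2$ divided by the appropriate overcount. Comparing with $(n+1)!/2$ should force equality, and with it the theorem.
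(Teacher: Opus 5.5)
\textbf{There is a genuine gap: the proposal never establishes the lower bound it reduces to.} Your reduction (it suffices that one $\eqab$-class of ordered cam-sets with product $c$ has at least $(n+1)!/2$ elements) is sound in substance. So is Step~1: you transport orderings back to a fixed reference path and apply Lemma~\ref{lem:partial_action_tree}, which is what the map $\phi$ of Section~7 does.

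The primary route of Step~2 claims that rule (b), interleaved with rule (a), connects all bi-coloured paths giving $c$. Given Lemma~\ref{lem:hurwitz_a_rule}, this is not a step toward the theorem but a restatement of it. It is exactly what the paper proves in type $A$, using three facts:
\begin{enumerate}[(i)]
\item rule (b) on an interior edge $t_j$ with $x_{j-1}\in A$, $x_j\in B$ swaps these two vertices and keeps the bi-colouring (Lemma~\ref{lem:hurwitz_swap});
\item such swaps sort any path with fixed endpoints to a canonical one (Lemma~\ref{lem:hurwitz_fixed_endpoints});
\item endpoints can be moved by $n$-fold rule (b) together with the end-edge move (Lemmas~\ref{lem:hurwitz_swap_extremity} and~\ref{lem:hurwitz_change_endpoints}).
\end{enumerate}
You only announce an induction on $n$ that deletes $t_n$, and it is not routine. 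Moves on the shorter path do not lift freely: $t_{n-1}$ cannot be pushed to the last position past $t_n$ when $t_{n-1}\prec t_n$. Also, the shorter path does not produce $c$.

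\textbf{The fallback count contains a false claim and does not force equality.} You assert that the ${}^{c^k}S$, $0\le k<h$, are $h$ distinct cam-sets. This fails. In $\Sym_4$ take $S=\{(1\,2),(2\,4),(4\,3)\}$ (the path $1,2,4,3$) and $c=(1\,2)(3\,4)(2\,4)$, a $4$-cycle that is simple for $S$. Then $c^2=(1\,3)(2\,4)$ reverses the path, so ${}^{c^2}S=S$. The group $\langle c\rangle$ acts freely on chambers, but a cam-set corresponds to two opposite chambers. Hence ${}^{c^k}S=S$ as soon as $c^k=w_0$, as happens for a bipartite $c$ with $h$ even. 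Separately, once you reach $n!\,h/2$, any further division by an ``overcount'' gives a bound strictly below $(n+1)!/2=n!\,h/2$, so nothing is forced.

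\textbf{The fix is to count pairs, not cam-sets.} Take a reference path $S_0=\{s_1,\dots,s_n\}$ and set $\phi(w,\sigma)=({}^ws_{\sigma(1)},\dots,{}^ws_{\sigma(n)})$.
\begin{enumerate}[(i)]
\item Step~1 gives, for each $\sigma\in\Sym_n$, some $w$ with $\phi(w,\sigma)$ in the class.
\item Applying rule (b) $n$ times replaces $w$ by $c^{\pm1}w$ and keeps $\sigma$. The $c^kw$ are $h$ distinct elements of $W$.
\item So the class has at least $n!\,h$ preimages under $\phi$.
\item $\phi$ is exactly two-to-one: the preimages are $(w,\sigma)$ and $(ww_0,\sigma')$. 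Hence the class has at least $n!\,h/2=(n+1)!/2$ elements.
\end{enumerate}
This is the argument of Lemma~\ref{lem:count_total_M}. With it, your proposal becomes a valid but non-constructive counting proof. The paper's type-$A$ proof instead exhibits the moves explicitly through bi-colourings.
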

    Since $\eqab$ is an equivalence relation, we need to show that it has only one class per Coxeter element.\\
    We start with this lemma that says that rule $(a)$ allows going from any ordering of a cam-set to any other ordering with the same bi-coloring:
    \begin{lemma}\label{lem:hurwitz_a_rule}
    Let $(S, \prec)$ and $(S, \prec')$ be two orderings of the same cam-set such that $(S, \{A_{\prec}, B_{\prec}\}) = (S, \{A_{\prec'}, B_{\prec'}\})$. Then $(S, \prec) \toas (S, \prec')$.
    \end{lemma}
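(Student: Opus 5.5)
We reduce the statement to a standard fact about linear extensions of a poset. Write the path $S$ as $t_1,\ldots,t_n$ from $a$ to $b$, and let $x_j$ be the common vertex of $t_j$ and $t_{j+1}$. The bi-coloring $\{A_\prec,B_\prec\}$ records, for each $1\le j\le n-1$, whether $t_j\prec t_{j+1}$ or $t_{j+1}\prec t_j$. Two transpositions of $S$ fail to commute exactly when they share a vertex, that is, when they are $t_j,t_{j+1}$ for some $j$. Hence the bi-coloring is the same data as an orientation of the Coxeter diagram of $S$, which is the path $t_1\longline t_2\longline\cdots\longline t_n$. Let $P$ be the partial order on $S$ given by the transitive closure of this orientation. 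The hypothesis says that $\prec$ and $\prec'$ induce the same orientation, so both are linear extensions of the same $P$. The goal becomes: any two linear extensions of $P$ are connected by swaps of adjacent elements that are incomparable in the diagram, i.e.\ non-adjacent in the path. These are exactly $\toa$ moves, since such $t_i,t_j$ commute.

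We prove this by induction on $n$, bubbling the $\prec'$-minimum to the front. Let $s$ be the $\prec'$-minimal element of $S$, and write $\prec$ as $s_1\prec s_2\prec\cdots\prec s_n$ with $s=s_m$. For each $i<m$, the element $s_i$ does not share a vertex with $s$. Indeed, if it did, $s_i$ and $s$ would be diagram neighbours. Then their relative order would be fixed by the common orientation, giving $s\prec' s_i$ and hence $s\prec s_i$, which contradicts $i<m$.

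So $s$ commutes with $s_1,\ldots,s_{m-1}$. Applying $\toa$ successively to $(s_{m-1},s)$, $(s_{m-2},s)$, and so on, moves $s$ to the first position. The resulting order $\prec''$ has $s$ minimal, and the relative orders of all neighbouring pairs are unchanged, since we only swapped commuting elements. The orders $\prec''$ and $\prec'$ now agree on $s$ and induce the same orientation on the remaining $n-1$ elements.

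We then apply the argument recursively to positions $2,\ldots,n$. Here the graph is the induced subgraph on $S\setminus\{s\}$, which is a disjoint union of paths, with the induced orientation. The induction therefore has to be run with the general statement: for any simple graph and any two linear orders inducing the same orientation on its edges, the orders are connected by swaps of adjacent non-neighbours. The argument above never used that the graph is a path, so the same bubbling step works verbatim. Alternatively, one can avoid changing the statement by keeping $s$ fixed at the front and never moving it.

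The main point needing care is this justification: that non-commuting is equivalent to sharing a vertex, so that the $\toa$ moves are exactly swaps of non-neighbours, and that these moves preserve the bi-coloring. Everything else is the classical connectivity of linear extensions under adjacent transpositions of incomparable elements.
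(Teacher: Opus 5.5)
Your proposal is correct and is essentially the paper's proof. The paper inducts on $n-N$, where $N$ is the length of the common prefix of $\prec$ and $\prec'$, and moves $s'_{N+1}=t_j$ leftward to position $N+1$ by $\toa$ moves, arguing exactly as you do that none of the elements it passes can be $t_{j-1}$ or $t_{j+1}$, since the shared bi-coloring would force such a neighbour to precede $t_j$ in $\prec'$ and hence to lie in the common prefix; your option of keeping the already-placed elements fixed at the front is literally this induction, and your generalisation to arbitrary graphs is the same bubbling step stated in slightly greater generality.
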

    \begin{proof}
        Write $S$ as $a=x_0\longline\cdots\longline x_n = b$ and let $t_i$ be the edge $x_{i-1}\longline x_i$ for all $1\le i le n$. On the other hand, with order $\prec$, $S$ can be written $\{s_1 \prec\ldots \prec s_n\}$ the ordering $\prec$ and $S= \{s'_1 \prec' \ldots \prec' s'_n\}$ with the ordering $\prec'$. Let $N$ be the largest index such that for all $1 \le i \le N$, $s_i = s'_i$. We prove the result by induction on $n-N$. If $N=n$, then $\prec = \prec'$ and we are done.\\
        Assume $N < n$. Then $s'_{N+1} = s_i = t_j$ for some $i > N+1$ and some $1 \le j \le n$. We want to move $t_j$ to position $N+1$ by applying rule $(a)$ several times. For this, we need to prove that all of $s_{N+1}, \ldots s_{i-1}$ commutes with $t_j$, i.e. none of these are $t_{j-1}$ or $t_{j+1}$. Assume by contradiction that $t_{j-1} = s_l$ for some $N < l < i$. Then $t_{j-1} \prec t_j$ so $x_{j-1} \in A_{\prec} = A_{\prec'}$ so $t_{j-1} \prec' t_j=s'_{N+1}$. Thus, $t_{j-1} = s'_m$ for some $m \le N$, but the $N$ smallest elements of $\prec'$ and $\prec$ are the same, so $t_{j-1} = s_m = s_l$ but this is impossible since $m \le N < l$. Similarly, if $t_{j+1} = s_l$ for some $N < l < i$, then $x_{j+1} \in B_{\prec} = B_{\prec'}$ so $t_{j+1} \prec' t_j=s'_{N+1}$, thus $t_{j+1} = s'_m$ for some $m \le N$, and we get the same contradiction. Thus, neither $t_{j-1}$ nor $t_{j+1}$ appears between $s_{N+1}$ and $s_i$, so we can move $t_j$ to position $N+1$ by applying rule $(a)$ several times : $(S, \prec) \toas (s_1,\ldots,s_N, t_j, s_{N+1}, \ldots, s_{i-1}, s_{i+1}, \ldots, s_n)$. We conclude by induction hypothesis.\\
    \end{proof}
    The previous result shows that the relation $\toas$ is total inside the set of orderings that give the same bi-colored cam-set. We will write $(S, \{A, B\}) \tobs (S', \{A', B'\})$ if there exists orderings $\prec$ and $\prec'$ such that $(S, \prec) \tobs (S', \prec')$ and $(S, \{A, B\})$ (resp. $(S', \{A', B'\})$) is the bi-colored cam-set associated with $(S, \prec)$ (resp. $(S', \prec')$).\\
    Now, let's show that if we reached the same endpoints, then we got the same bi-coloring:
    \begin{lemma}\label{lem:same_endpoints}
        Let $(S, \prec)$ and $(S', \prec')$ be two ordered cam-sets with the same endpoints $a$ and $b$, such that $c(S, \prec) = c(S', \prec')$. Then $\{A_{\prec}, B_{\prec}\} = \{A_{\prec'}, B_{\prec'}\}$.
    \end{lemma}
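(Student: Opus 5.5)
The idea is to use Remark \ref{rem:bi_coloring_from_coxeter}: once we know the Coxeter element and which endpoint lies in which part, the bi-coloring is determined. Since both ordered cam-sets give the same element $c$ and have the same endpoints, the two bi-colorings must then agree.

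First I fix the labels. By Definition \ref{defi:bi_colored_cam-set}, each of the partitions $\{A_{\prec}, B_{\prec}\}$ and $\{A_{\prec'}, B_{\prec'}\}$ puts the two endpoints $a$ and $b$ in different parts. The statement compares unordered partitions, so I may call $A$ the part of $\{A_{\prec},B_{\prec}\}$ containing $a$ and $A'$ the part of $\{A_{\prec'},B_{\prec'}\}$ containing $a$. Then $b$ lies in the complements $B$ and $B'$. The roles of the two colors in Definition \ref{defi:bi_colored_cam-set} are symmetric, so we may regard $a$ as the distinguished endpoint of the $A$-part in both bi-colored cam-sets. By the previous proposition, $c = c(S,\{A,B\})$ and $c = c(S',\{A',B'\})$.

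Next I apply the explicit rules describing $c(S,\{A,B\})$. Writing $A=\{a_1=a,\dots,a_k\}$ in its natural order, we have $c(a_i)=a_{i+1}$ for $i<k$ and $c(a_k)=b$. Hence $A=\{a,c(a),\dots,c^{k-1}(a)\}$ and $c^k(a)=b$. Since $b\notin A$ and the elements $c^j(a)$ for $0\le j<k$ all lie in $A$, the integer $k$ is exactly the smallest $j>0$ with $c^j(a)=b$. This $k$ depends only on $c$, $a$ and $b$. The same argument for $(S',\{A',B'\})$ gives $A'=\{a,c(a),\dots,c^{k'-1}(a)\}$, where $k'$ is the same minimal exponent, so $k'=k$. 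Therefore $A=A'$, and since $B$ and $B'$ are the complements of $A$ and $A'$ in $[1,n+1]$, also $B=B'$.

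I expect the only delicate point to be the relabelling in the first step. One must check that Definition \ref{defi:bi_colored_cam-set} and Remark \ref{rem:bi_coloring_from_coxeter} apply with either endpoint playing the role of $a$, that is, that nothing depends on which part is named $A$. This holds because the definition only requires the two endpoints to lie in different parts and then names the part containing the chosen endpoint. Once that is settled, the rest is the minimality argument above. The degenerate case $n=1$ is trivial, since then $A=\{a\}$ and $B=\{b\}$.
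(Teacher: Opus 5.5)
Your proposal is correct and follows essentially the same route as the paper's proof: relabel so that $a$ lies in the same part of both bi-colorings, then use that this part equals $\{a, c(a), \dots, c^{k-1}(a)\}$, where $k$ is determined by $c^k(a)=b$. Your observation that $k$ is the least positive exponent with $c^k(a)=b$, and hence depends only on $c$, $a$, $b$, makes explicit a point the paper leaves implicit.
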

    \begin{proof}
        Let $a \in A_{\prec}$ and $b \in B_{\prec}$ be the two endpoints of the path $S$. Up to inverting $A_{\prec'}$ and $B_{\prec'}$, we can assume that $a \in A_{\prec'}$ and $b \in B_{\prec'}$. Let $c = c(S, \prec) = c(S', \prec')$. There is some $k$ such that $c^k(a) = b$. Then, we have $A_{\prec} = \{a, c(a), c^2(a), \ldots, c^{k-1}(a)\} = A_{\prec'}$ and $B_{\prec} = \{b, c(b), c^2(b), \ldots, c^{n-k}(b)\} = B_{\prec'}$.
    \end{proof}
    The action of rule $(b)$ on the cam-set is to conjugate it by the last reflection, which has the effect of swapping the two vertices of this reflection in the graph by Proposition \ref{prop:cam-set_conjugation}. The action on the bi-coloring differs depending on whether one of these vertices is an endpoint of the cam-set.
    \begin{lemma}\label{lem:hurwitz_b_rule}
        Let $(S, \prec) = (s_1, \ldots, s_n)$ be an ordered cam-set, and $(S, \{A_{\prec}, B_{\prec}\})$ be the associated bi-colored cam-set. Let $a$ (resp. $b$) be the endpoint in $A_{\prec}$ (resp. $B_{\prec}$), $t_1,  \ldots, t_n$ be the sequence of edges from $a$ to $b$, and $a=x_0, x_1, \ldots x_n = b$ the sequence of vertices from $a$ to $b$. Then $s_n = t_k$ for some $k$. Let $(S', \prec') = (s_n, {}^{s_n}s_1, \ldots, {}^{s_n}s_{n-1})$ be the ordered cam-set obtained by applying rule $(b)$ to $(S, \prec)$, and let $(S', \{A_{\prec'}, B_{\prec'}\})$ be the associated bi-colored cam-set. Then:
        \begin{itemize}
            \item If $k = 1$, then $x_1 \in B_{\prec}$, and $\{A_{\prec'}, B_{\prec'}\} = \{A_{\prec} \cup \{x_1\}, B_{\prec} \setminus \{x_1\}\}$.
            \item If $k = n$, then $x_{n-1} \in A_{\prec}$, and $\{A_{\prec'}, B_{\prec'}\} = \{A_{\prec} \setminus \{x_{n-1}\}, B_{\prec} \cup \{x_{n-1}\}\}$.
            \item If $1 < k < n$, then $\{A_{\prec'}, B_{\prec'}\} = \{A_{\prec}, B_{\prec}\}$.
        \end{itemize}
    \end{lemma}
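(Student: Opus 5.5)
We compute the new path and its bi-coloring directly. By Proposition \ref{prop:cam-set_conjugation}, $S' = {}^{t_k}S$ is obtained by applying the transposition $t_k=(x_{k-1}\ x_k)$ to the vertex sequence. So
\[S' = x_0 \longline \cdots \longline x_{k-2} \longline x_k \longline x_{k-1} \longline x_{k+1} \longline \cdots \longline x_n.\]
Its edges, read from $x_0$ to $x_n$, are
\[t'_j = {}^{t_k}t_j \quad (j\ne k), \qquad t'_k = t_k.\]
Their endpoints are the same as those of $S$ unless $k=1$ or $k=n$, where an endpoint is moved. The new order $\prec'$ is obtained by transporting $\prec$ through conjugation by $t_k$ on $S\setminus\{t_k\}$, and then declaring $t_k=t'_k$ the minimum. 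So for $i,j\neq k$ we have $t'_i \prec' t'_j$ iff $t_i\prec t_j$, while $t'_k\prec' t'_j$ for all $j\neq k$. Recall that $t_k$ was the maximum for $\prec$.

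We now recompute the colors via the definition of $A_{\prec'}$. Write $y_j$ for the $j$-th vertex of $S'$, so $y_j=x_j$ except $y_{k-1}=x_k$ and $y_k=x_{k-1}$. The color of $y_j$, for $1\le j\le n-1$, is decided by comparing $t'_j$ and $t'_{j+1}$.

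\emph{Case $1<k<n$.}
\begin{itemize}
\item For $j\notin\{k-1,k\}$, both edges differ from $t_k$, so the comparison is unchanged and $y_j=x_j$ keeps its color.
\item At $j=k-1$, the vertex $y_{k-1}=x_k$ compares $t'_{k-1}$ and $t'_k=t_k$. Since $t_k$ is now minimal, $t'_k\prec' t'_{k-1}$, so $x_k\in B_{\prec'}$. In $S$, $x_k$ compared $t_k$ and $t_{k+1}$ with $t_k$ maximal, so $t_{k+1}\prec t_k$ and $x_k\in B_\prec$.
\item Symmetrically, $y_k=x_{k-1}$ compares $t_k$ and $t'_{k+1}$, giving $x_{k-1}\in A_{\prec'}$. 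In $S$, $x_{k-1}$ had $t_{k-1}\prec t_k$, so $x_{k-1}\in A_\prec$.
\end{itemize}
The endpoints are unchanged, so the partition is unchanged.

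\emph{Case $k=1$.} The path $S'$ starts at $x_1$, goes to $x_0$, then to $x_2$, and so on. The new endpoint $y_0=x_1$ lies in $A_{\prec'}$. Next, $y_1=x_0$ compares $t'_1=t_1$ (minimal) with $t'_2$, so $x_0\in A_{\prec'}$. All other vertices keep their colors. In $S$, $x_1$ compared $t_1$ (maximal) with $t_2$, so $t_2\prec t_1$ and $x_1\in B_\prec$. Hence $A_{\prec'}=A_\prec\cup\{x_1\}$ and $B_{\prec'}=B_\prec\setminus\{x_1\}$.

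\emph{Case $k=n$.} The new endpoint $y_n=x_{n-1}$ lies in $B_{\prec'}$. The vertex $y_{n-1}=x_n$ compares $t'_{n-1}$ with $t'_n=t_n$; since $t_n$ is minimal, $x_n\in B_{\prec'}$. In $S$, $x_{n-1}$ had $t_{n-1}\prec t_n$, so $x_{n-1}\in A_\prec$. This gives the stated formula.

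The only delicate point is bookkeeping: the endpoint roles of $a$ and $b$ may swap in the cases $k=1$ and $k=n$, which is why the statement uses unordered pairs. Care is also needed in the degenerate case $n=1$, where $k=1=n$ and there are no interior vertices; there the three cases coincide trivially, since the partition is just $\{\{x_0\},\{x_1\}\}$.
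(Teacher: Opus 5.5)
Your proof is correct and is essentially the paper's argument. Both compute ${}^{t_k}S$ via Proposition~\ref{prop:cam-set_conjugation}, note that $\prec'$ is $\prec$ transported by conjugation with $t_k$ moved from maximum to minimum, and recheck colors only at the vertices next to $t_k$; you treat $k=n$ directly where the paper appeals to symmetry, and your description of $S'$ for $1<k<n$ is more accurate than the paper's displayed path, which swaps $x_k,x_{k+1}$ instead of $x_{k-1},x_k$.

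The one thing to fix is your closing remark on $n=1$. There the first two bullets are actually false: they predict $\{\{x_0,x_1\},\emptyset\}$, while the partition stays $\{\{x_0\},\{x_1\}\}$. So the lemma should be read with $n\ge 2$, which both your argument and the paper's implicitly require, rather than saying the cases ``coincide trivially''.
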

    \begin{proof}
        Assume first that $k=1$. Then $s_n = t_1$ is the maximum of $\prec$, so $t_2 \prec t_1$, thus $x_1 \in B_{\prec}$. By Proposition \ref{prop:cam-set_conjugation}, $S'= x_1 \longline a \longline x_2 \longline \cdots \longline x_n$. Write $t'_1, \ldots, t'_n$ the sequence of edges of $S'$ from $x_1$ to $b$. Then by Proposition \ref{prop:cam-set_conjugation}, we have $t'_1 = t_1 = (x_1\ a), t'_2=(a\ x_2), t'_i = t_i$ for $i \ge 3$. By definition of rule $(b)$, $\prec'$ is defined by $t'_1 \prec' t'_i$ for all $i \ge 2$, and for $2 \le i,j \le n$, $t'_i \prec' t'_j$ if and only if $t_i \prec t_j$. Thus, if we take $A_{\prec'}$ as the part of the partition that contains $x_1$, for $j \ge 2$, we have $x_j \in A_{\prec'}$ if and only if $t'_j \prec' t'_{j+1}$, if and only if $t_j \prec t_{j+1}$, if and only if $x_j \in A_{\prec}$, and $t'_1 \prec' t'_2$ so $a \in A_{\prec'}$. Thus, we have $A_{\prec'} = A_{\prec} \cup \{x_1\}$ and $B_{\prec'} = B_{\prec} \setminus \{x_1\}$.\\
        The case $k=n$ reduces to the previous one by inverting the roles of $a$ and $b$.\\
        Assume now that $1 < k < n$. Since $s_n = t_k$ is the maximum of $\prec$, we have $t_{k-1} \prec t_k$ and $t_{k+1} \prec t_k$, thus $x_{k-1} \in A_{\prec}$ and $x_k \in B_{\prec}$. By Proposition \ref{prop:cam-set_conjugation}, we know that $S'$ that we denote $S' = x'_0 \longline \cdots \longline x'_n$ can be written 
        $$S' = a \longline x_1 \longline \cdots \longline x_{k-1} \longline x_{k+1} \longline x_k \longline x_{k+2} \longline \cdots \longline x_n$$ 
        Write $t'_1,\ldots, t'_n$ the sequence of edges of $S'$ from $a$ to $b$. Then by Proposition \ref{prop:cam-set_conjugation}, we have $t'_i = t_i$ for all $i \ne k-1, k+1$, $t'_{k-1} = (x_{k-1}\ x_{k+1})$,  $t'_{k+1} = (x_k\ x_{k+2})$. By definition of rule $(b)$, $\prec'$ is defined by $t'_k \prec' t'_i$ for all $i \ne k$, and for $i,j \ne k$, $t'_i \prec' t'_j$ if and only if $t_i \prec t_j$. Thus, if we take $A_{\prec'}$ as the part of the partition that contains $a$, for all $j \ne k-1, k$, we have $x'_j=x_j \in A_{\prec'} \Leftrightarrow t'_j \prec' t'_{j+1} \Leftrightarrow t_j \prec t_{j+1}$ if and only if $x_j \in A_{\prec}$. Let's look at $x_{k-1} = x'_k$: we have $t'_k \prec' \Leftrightarrow t'_{k+1}$, so $x_{k-1}=x'_k \in A_{\prec'}$ (remember it was already in $A_{\prec}$). Now, look at $x_k = x'_{k-1}$: we have $t'_k \prec' t'_{k-1}$, so $x_k = x'_{k-1} \in B_{\prec'}$ (remember it was already in $B_{\prec}$). Thus, we have $A_{\prec'} = A_{\prec}$ and $B_{\prec'} = B_{\prec}$.\\ 
    \end{proof}
    We now understand how rule $(b)$ acts on bi-colored cam-sets. Combining this with Lemma \ref{lem:hurwitz_a_rule}, we will be able to work on bi-colored cam-sets.
    \begin{lemma}\label{lem:hurwitz_swap}
        Let $(S, \{A, B\})$ be a bi-colored cam-set, let $a \in A$ and $b \in B$ be the two endpoints of $S$. Wright $a=x_0, x_1, \ldots, x_{n-1}, x_n = b$ be the vertices of $S$ from $a$ to $b$. Let $1 < j < n$ be such that $x_{j-1} \in A$ and $x_j \in B$, and let $S' = {}^{t_j}S$ be the cam-set obtained from $S$ by swapping the vertices $x_{j-1}$ and $x_j$. Then $(S, \{A, B\}) \tobs (S', \{A, B\})$.
    \end{lemma}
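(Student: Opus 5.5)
The idea is to realize the swap with a single application of rule $(b)$. To do this I would choose an ordering of $S$ that induces the bi-coloring $\{A,B\}$ and has $t_j=(x_{j-1}\ x_j)$ as its maximum. Lemma \ref{lem:hurwitz_b_rule}, in the case $1<k<n$, then does the rest of the work.

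\textbf{Step 1: find a suitable ordering.} Orient the path $S$ by the bi-coloring. For each interior vertex $x_i$, the edge $t_i$ must precede $t_{i+1}$ if $x_i\in A$, and follow it if $x_i\in B$. This defines a partial order $P$ on $S$: its Hasse diagram is the path $t_1,\ldots,t_n$ with each edge oriented, so it has no cycles. An ordering $\prec$ of $S$ satisfies $(A_{\prec},B_{\prec})=(A,B)$ exactly when $\prec$ is a linear extension of $P$. The surjectivity of $\phi$ proved above already shows that such extensions exist.

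The hypotheses $x_{j-1}\in A$ and $x_j\in B$ say that $t_{j-1}\prec t_j$ and $t_{j+1}\prec t_j$. Both neighbours $t_{j-1}$ and $t_{j+1}$ exist because $1<j<n$. So $t_j$ is a maximal element of $P$. I would take any linear extension of $P$ restricted to $S\setminus\{t_j\}$ and append $t_j$ at the end. This is a linear extension of $P$ because $t_j$ lies above nothing in $P$ except its two neighbours, and they are placed before it. Call the result $\prec$. Then $(S,\prec)=(s_1,\ldots,s_n)$ with $s_n=t_j$, and its associated bi-colored cam-set is $(S,\{A,B\})$.

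\textbf{Step 2: apply rule $(b)$.} Applying rule $(b)$ once gives $(S,\prec)\tob(S',\prec')$ with $S'={}^{t_j}S$. By Proposition \ref{prop:cam-set_conjugation}, $S'$ is the path obtained from $S$ by exchanging the vertices $x_{j-1}$ and $x_j$, which is the cam-set in the statement. Since $1<j<n$, neither $x_{j-1}$ nor $x_j$ is an endpoint, so $S'$ still has endpoints $a$ and $b$. The third case of Lemma \ref{lem:hurwitz_b_rule} (with $k=j$) gives $\{A_{\prec'},B_{\prec'}\}=\{A,B\}$, with $a\in A_{\prec'}$. By the definition of $\tobs$ on bi-colored cam-sets, this yields $(S,\{A,B\})\tobs(S',\{A,B\})$.

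\textbf{Main obstacle.} The only real step is Step 1, building an ordering that both realizes the prescribed bi-coloring and has $t_j$ last. It reduces to the observation that $t_j$ is a sink of the oriented path, which is precisely what the hypothesis on the colors of $x_{j-1}$ and $x_j$ provides.

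The remaining point needing care is bookkeeping. In the proof of Lemma \ref{lem:hurwitz_b_rule} the vertices of $S'$ are listed in a slightly different form. I would check directly from Proposition \ref{prop:cam-set_conjugation} that the vertex sequence of $S'$ is $x_0,\ldots,x_{j-2},x_j,x_{j-1},x_{j+1},\ldots,x_n$. I would then check that the colors computed in that lemma, $x_{j-1}\in A$ and $x_j\in B$, still hold for these two vertices at their new positions. That is what makes the partition of the vertex set $[1,n+1]$ literally unchanged.
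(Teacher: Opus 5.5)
Your proof is correct and is essentially the paper's argument. Both make the sink $t_j$ the last factor of an ordering that induces $\{A,B\}$, then apply rule $(b)$ together with the case $1<k<n$ of Lemma \ref{lem:hurwitz_b_rule}. The paper starts from an arbitrary ordering inducing $\{A,B\}$ and pushes $t_j$ to the end with rule $(a)$ moves, whereas you construct the ordering directly as a linear extension, which fits the existential definition of $\tobs$ on bi-colored cam-sets a bit more cleanly.

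Two small remarks. The right reason your appended ordering is a linear extension is that $t_j$ is maximal in $P$; it can lie above more elements than its two neighbours. Your bookkeeping check is warranted: the vertex sequence of $S'$ is indeed $x_0,\ldots,x_{j-2},x_j,x_{j-1},x_{j+1},\ldots,x_n$, while the display in the proof of Lemma \ref{lem:hurwitz_b_rule} is off by one in the swapped indices.
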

    \begin{proof}
        Let $(S, \prec) = (s_1, \ldots s_n)$ be an ordering of $S$ such that $(S, \{A, B\})$ is the associated bi-colored cam-set.\\
        Assume first that $t_j$ is the maximum of $\prec$. Then, by Lemma \ref{lem:hurwitz_b_rule}, applying rule $(b)$ to $(S, \prec)$ gives an ordering $(S', \prec')$ of $S'$ with the same associated bi-coloring, so $(S, \{A, B\}) \tobs (S', \{A, B\})$.\\
        Now, in the general case let $t_j = s_i$ for some $i$. Since $x_{j-1} \in A$ and $x_j \in B$, then $t_{j-1} \prec t_j$ and $t_{j+1} \prec t_j$. Thus, neither $t_{j-1}$ nor $t_{j+1}$ appears between $s_{i+1}$ and $s_n$. So, by applying rule $(a)$, we can move $t_j$ to the last position: $(S, \prec) \toas (s_1, \ldots, s_{i-1}, s_{i+1}, \ldots, s_n, t_j)$, which reduces to the previous case.
    \end{proof}
    The previous lemma shows that we can use rules $(a)$ and $(b)$ to swap two adjacent vertices of the cam-set that are in different parts of the bi-coloring. Since $\eqab$ is symmetric, we can then permute arbitrarily the vertices (except the endpoints) of the cam-set while preserving the bi-coloring.\\
    \begin{lemma}\label{lem:hurwitz_fixed_endpoints}
        Let $(S, \prec)$ and $(S', \prec')$ be two ordered cam-sets such that $c(S, \prec) = c(S', \prec')$. Assume that the endpoints of $S$ and $S'$ are the same, denoted by $a$ and $b$. Then $(S, \prec) \eqab (S', \prec')$.
    \end{lemma}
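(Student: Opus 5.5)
First, by Lemma \ref{lem:same_endpoints}, since $S$ and $S'$ have the same endpoints $a,b$ and $c(S,\prec)=c(S',\prec')$, the two bi-colorings agree: $\{A_\prec,B_\prec\}=\{A_{\prec'},B_{\prec'}\}$. Call the common partition $\{A,B\}$, with $a\in A$, $b\in B$. By Lemma \ref{lem:hurwitz_a_rule}, the relation $\toas$ already connects any two orderings of a fixed cam-set that give the same bi-coloring. So it suffices to find a chain of bi-colored cam-sets $(S,\{A,B\})=(S_0,\{A,B\}),\ldots,(S_m,\{A,B\})=(S',\{A,B\})$, each related to the next by $\tobs$. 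By Lemma \ref{lem:hurwitz_swap} each step preserves the bi-coloring, and the equivalence $\eqab$ is symmetric. Between consecutive steps we realign orderings using Lemma \ref{lem:hurwitz_a_rule}.

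The problem is now combinatorial. The cam-sets with endpoints $a,b$ and bi-coloring $\{A,B\}$ are the Hamiltonian paths from $a$ to $b$ whose vertices of $A$ appear in the order $a=a_1,\ldots,a_k$ and whose vertices of $B$ appear in the order $b_{n+1-k},\ldots,b_1=b$, read from $a$ to $b$. This ordering is forced by Definition \ref{defi:bi_colored_cam-set} together with Remark \ref{rem:bi_coloring_from_coxeter}, since the orders are determined by $c$. Such a path is therefore a shuffle of the fixed word $a_2\cdots a_k$ with the fixed word $b_{n+1-k}\cdots b_2$, placed between $a$ and $b$. Lemma \ref{lem:hurwitz_swap} lets us swap two adjacent interior vertices $x_{j-1}\in A$, $x_j\in B$; these are exactly the positions $0<j-1<j<n$ with an $A$-vertex followed by a $B$-vertex. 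By symmetry of $\eqab$ we may also swap a $B$-vertex followed by an $A$-vertex.

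It remains to show that adjacent transpositions of an $A$-letter and a $B$-letter connect all shuffles of two fixed words. This is standard. Bubble-sort any shuffle to the canonical one $a\,a_2\cdots a_k\,b_{n+1-k}\cdots b_2\,b$ by repeatedly moving the leftmost $A$-letter that is preceded by a $B$-letter one step left. Each such move is an $A/B$ adjacent swap, so the relative orders inside $A$ and inside $B$ never change, and the number of inversions strictly decreases. Hence both $S$ and $S'$ are connected to the canonical path, and therefore to each other.

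The main obstacle is checking that the swaps in Lemma \ref{lem:hurwitz_swap} stay in the allowed range. In particular, we must never swap an endpoint, and we must confirm that the conjugated set ${}^{t_j}S$ really is the path with the two vertices exchanged, as in Proposition \ref{prop:cam-set_conjugation}. This holds because only the interior letters $a_2,\ldots,a_k,b_2,\ldots$ are moved, and they occupy positions $1,\ldots,n-1$. For an edge $t_j$ joining $x_{j-1}\in A$ and $x_j\in B$ with both interior, the index satisfies $2\le j\le n-1$, so the hypothesis $1<j<n$ of the lemma holds. The degenerate cases $k=1$ or $k=n$ give a single shuffle, so there is nothing to prove.
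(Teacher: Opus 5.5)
Your proposal is correct and takes essentially the same approach as the paper. Both arguments reduce to bi-colored cam-sets via Lemma \ref{lem:same_endpoints} and Lemma \ref{lem:hurwitz_a_rule}, then use repeated applications of Lemma \ref{lem:hurwitz_swap} to sort the path into the canonical one, in which all interior $A$-vertices precede all interior $B$-vertices; your inversion count equals the paper's potential $M(S)=\sum_{x_j\in A} j$ minus its minimum. You also make explicit two points the paper leaves implicit: realigning orderings between consecutive $\tobs$ steps, and using symmetry to swap a $B$-vertex followed by an $A$-vertex.
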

    \begin{proof}
        By Lemma \ref{lem:same_endpoints}, the associated bi-colorings are, $A = \{a, c(a), \ldots, c^{k-1}(a)\}$ and $B = \{b, c(b), \ldots, c^{n-k}(b)\}$ with $c^k(a) = b$. Let $S''= a \longline c(a), \dots \longline c^{k-1}(a) \longline c^{n-k}(b) \longline \cdots \longline c(b) \longline b$, and let $\prec''$ be an ordering of $S''$ with associated bi-colored cam-set $(S'', \{A, B\})$. It is enough to show that $(S, \prec) \eqab (S'', \prec'')$. Indeed, we will also have $(S', \prec') \eqab (S'', \prec'')$, and then, since $\eqab$ is symmetric and transitive, we will get $(S, \prec) \eqab (S', \prec')$.\\
        Let $x_1, \ldots, x_{n-1}$ be the vertices of $S$ between $a$ and $b$. Let $M(S) = \sum_{x_j \in A} j$ be the sum of the indices of the vertices in $A$. In $S''$, the vertices of $A$ are in positions $1, 2, \ldots, k-1$, so their sum of indices is minimal, and it is the only cam-set with this property. Then we prove by induction on $M(S)$ that $(S, \{A, B\}) \tobs (S'', \{A, B\})$. If $M(S)$ is minimal, then $S = S''$ and we are done.\\
        Assume now that $M(S)$ is not minimal. Then $S \ne S''$, so there exists some $j$ such that $x_j \in B$ and $x_{j+1} \in A$. By Lemma \ref{lem:hurwitz_swap}, $(S, \{A, B\}) \tobs (S_1, \{A, B\})$ where $S_1$ is obtained from $S$ by swapping $x_j$ and $x_{j+1}$, and $M(S_1) = M(S) - 1$. By induction hypothesis, we have $(S_1, \{A, B\}) \tobs (S'', \{A, B\})$, so we conclude that $(S, \{A, B\}) \tobs (S'', \{A, B\})$.
    \end{proof}
    The last intermediate result, and interesting by itself, is that we can use rule $(b)$ to choose arbitrarily the endpoints of the cam-set.
    \begin{lemma}\label{lem:hurwitz_swap_extremity}
        Let $(S, \{A, B\})$ be a bi-colored cam-set, let $a \in A$ and $b \in B$ be the two endpoints of $S$. Let $a=x_0, x_1, \ldots, x_{n-1}, x_n = b$ be the vertices of $S$ from $a$ to $b$. Assume that $x_1 \in B$ (resp. $x_{n-1} \in A$), and let $S' = {}^{t_1}S$ (resp. $S' = {}^{t_n}S$) be the cam-set obtained from $S$ by swapping the vertices $x_0$ and $x_1$ (resp. $x_{n-1}$ and $x_n$). Then $(S, \{A, B\}) \tobs (S', \{A \cup \{x_1\}, B \setminus \{x_1\}\})$ (resp. $(S, \{A, B\}) \tobs (S', \{A \setminus \{x_{n-1}\}, B \cup \{x_{n-1}\}\})$).
    \end{lemma}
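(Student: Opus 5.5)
The plan is to mimic the proof of Lemma \ref{lem:hurwitz_swap}: first move $t_1$ (resp. $t_n$) to the last position of the ordering using rule $(a)$ only, then apply rule $(b)$ once and read off the new bi-coloring from Lemma \ref{lem:hurwitz_b_rule}. It suffices to treat the case $x_1 \in B$, since the other case follows by exchanging the roles of $a$ and $b$, that is, by reading the path from $b$ to $a$.

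First choose an ordering $(S,\prec) = (s_1,\ldots,s_n)$ whose associated bi-colored cam-set is $(S,\{A,B\})$. Such an ordering exists by surjectivity of $\phi$: if $n=1$ the hypothesis $x_1\in B$ holds trivially and the claim is immediate. Since $x_1 \in B$, the definition of $B_\prec$ gives $t_2 \prec t_1$. Write $t_1 = s_i$. The only reflection of $S$ not commuting with $t_1$ is $t_2$, because $t_1$ is an end edge of the path. As $t_2 \prec t_1$, none of $s_{i+1},\ldots,s_n$ equals $t_2$, so all of them commute with $t_1$. Applying rule $(a)$ repeatedly then gives
\[(S,\prec) \toas (s_1,\ldots,s_{i-1},s_{i+1},\ldots,s_n,t_1).\]
By Lemma \ref{lem:hurwitz_a_rule}, or simply because rule $(a)$ does not change the relative order of adjacent edges, this new ordering still has bi-coloring $\{A,B\}$.

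Now $t_1$ is the maximum, so we are in the case $k=1$ of Lemma \ref{lem:hurwitz_b_rule}. Applying rule $(b)$ produces an ordering of $S' = {}^{t_1}S$, and by Proposition \ref{prop:cam-set_conjugation} this is the path $x_1 \longline a \longline x_2 \longline \cdots \longline x_n$. The same lemma says its bi-coloring is $\{A\cup\{x_1\}, B\setminus\{x_1\}\}$. Combining the two steps yields the claimed $\tobs$ relation between bi-colored cam-sets. Strictly, the chain uses $\toa$ steps followed by one $\tob$ step. This should be read within $\eqab$, or else note that $\toa$ steps preserve the bi-colored cam-set, so at the level of bi-colored cam-sets only the single $\tob$ step is visible, as in Lemma \ref{lem:hurwitz_swap}.

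The main point to check is that the new endpoint labelling is consistent: $x_1$ becomes an endpoint and lies in the part $A\cup\{x_1\}$, while $b$ stays in $B\setminus\{x_1\}$. This is exactly the content of the case $k=1$ of Lemma \ref{lem:hurwitz_b_rule}, so no real obstacle remains. The only subtlety is the small case $n=1$, where $b=x_1$ and the set difference must be interpreted carefully; I would treat it separately or exclude it.
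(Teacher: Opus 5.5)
Your proposal is correct and is essentially the paper's proof. Since $t_2 \prec t_1$ and $t_2$ is the only reflection of $S$ not commuting with $t_1$, rule $(a)$ pushes $t_1$ to the last position without changing the bi-coloring; one application of rule $(b)$ together with the case $k=1$ of Lemma \ref{lem:hurwitz_b_rule} then gives the new bi-coloring, the other case being symmetric. Your worry about reading the chain only within $\eqab$ is unnecessary: by the definition of $\tobs$ on bi-colored cam-sets, you may directly take the reordered ordering (with $t_1$ last) as the witness ordering for $(S,\{A,B\})$.
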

    \begin{proof}
        Let $(S, \prec) = (s_1, \ldots s_n)$ be an ordering of $S$ such that $(S, \{A, B\})$ is the associated bi-colored cam-set.\\
        Assume first that $t_1$ (resp. $t_n$) is the maximum of $\prec$. Then, by Lemma \ref{lem:hurwitz_b_rule}, applying rule $(b)$ to $(S, \prec)$ gives an ordering $(S', \prec')$ of $S'$ with associated bi-coloring $(S', \{A \cup \{x_1\}, B \setminus \{x_1\}\})$ (resp. $(S', \{A \setminus \{x_{n-1}\}, B \cup \{x_{n-1}\}\})$), so $(S, \{A, B\}) \tobs (S', \{A \cup \{x_1\}, B \setminus \{x_1\}\})$ (resp. $(S, \{A, B\}) \tobs (S', \{A \setminus \{x_{n-1}\}, B \cup \{x_{n-1}\}\})$).\\
        Now, in the general case let $t_1 = s_i$  (resp. $t_n = s_i$) for some $i$. Since $x_1 \in B$ (resp. $x_{n-1} \in A$), then $t_2 \prec t_1$ (resp. $t_{n-1} \prec t_n$). Thus, $t_2$ (resp. $t_{n-1}$) does not appear between $s_{i+1}$ and $s_n$. So, by applying rule $(a)$, we can move $t_1$ (resp. $t_n$) to the last position: $(S, \prec) \toas (s_1, \ldots, s_{i-1}, s_{i+1}, \ldots, s_n, t_1)$ (resp. $(S, \prec) \toas (s_1, \ldots, s_{i-1}, s_{i+1}, \ldots, s_n, t_n)$), which reduces to the previous case.
    \end{proof}
    The following lemma shows how to change the endpoints using Hurwitz move.
    \begin{lemma}\label{lem:hurwitz_change_endpoints}
        Let $(S, \{A, B\})$ be a bi-colored cam-set, and let $a \in A$ and $b \in B$ be the two endpoints of $S$, and $a', b'$ be two distinct vertices. Then there exists a bi-colored cam-set $(S', \{A', B'\})$ with endpoints $a'$ and $b'$, such that $(S, \{A, B\}) \tobs (S', \{A', B'\})$.
    \end{lemma}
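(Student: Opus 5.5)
Since the Coxeter element $c=c(S,\{A,B\})$ is fixed and determines the two colour classes together with their natural orders, the plan is to show that rule $(b)$ moves each endpoint one step \emph{backwards} along the cycle $c$, and then to walk the endpoints to $a',b'$. Throughout I use that $\tobs$ is an equivalence relation, so each move of Lemmas \ref{lem:hurwitz_swap} and \ref{lem:hurwitz_swap_extremity} may also be used in reverse. The case $n=1$ is trivial (there are only two vertices), so assume $n\ge 2$.

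\emph{Step 1: interior shuffles.} Write $A=\{a_1=a,\dots,a_k\}$ and $B=\{b_1=b,\dots,b_{n+1-k}\}$ in their natural orders. Reading the path from $a$ to $b$, the vertices of $A$ appear in the order $a_1,\dots,a_k$ and those of $B$ in the order $b_{n+1-k},\dots,b_1$. Only the interleaving of these two sequences is free. By Lemma \ref{lem:hurwitz_swap} and its inverse, two adjacent interior vertices of different colours can be exchanged without changing the bi-colouring. Since adjacent transpositions generate all shuffles, we reach every path with the same endpoints and the same bi-colouring. In particular, we can reach the path $a\longline b_{n+1-k}\longline\cdots\longline b_2\longline a_2\longline\cdots\longline a_k\longline b$, and also the path in which all interior $B$-vertices come after all interior $A$-vertices.

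\emph{Step 2: moving one endpoint.} Suppose $|B|\ge 2$. Put the cam-set in the first shape above, so that $x_1=b_{n+1-k}=c^{-1}(a)\in B$. Lemma \ref{lem:hurwitz_swap_extremity} then swaps $x_0$ and $x_1$. The new $A$-endpoint is $c^{-1}(a)$, the $B$-endpoint is still $b$, and $c^{-1}(a)$ has moved from $B$ to $A$. Symmetrically, if $|A|\ge 2$, we put $a_k=c^{-1}(b)$ at position $n-1$ and use the second half of Lemma \ref{lem:hurwitz_swap_extremity}; this replaces the endpoint $b$ by $c^{-1}(b)$. So, viewing the vertices on the oriented cycle $c$, each endpoint can be stepped backwards, provided it does not land on the other endpoint. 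The condition $|B|\ge2$ is exactly $c^{-1}(a)\ne b$, and $|A|\ge 2$ is exactly $c^{-1}(b)\ne a$.

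\emph{Step 3: navigation.} The target endpoint set is unordered, so by swapping the names $a',b'$ we may assume $a'\ne b$. First step $b$ backwards repeatedly until $b=c(a)$; this is allowed at each step until $c^{-1}(b)=a$. Now step $a$ backwards. Starting just before $b$ on the cycle, the successive values $a,c^{-1}(a),c^{-2}(a),\dots$ run through every vertex other than $b$ before hitting $b$. Since $a'\ne b$, we stop when $a=a'$. Finally step $b$ backwards; its successive values run through every vertex except $a'$, so we stop when $b=b'$, which is possible because $b'\ne a'$. Concatenating these moves gives $(S,\{A,B\})\tobs(S',\{A',B'\})$ with endpoints $a',b'$.

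The main obstacle is Step 1. One must check carefully, using the description of the $A$ and $B$ orders in Definition \ref{defi:bi_colored_cam-set}, that the vertex adjacent to the endpoint really is $c^{-1}(a)$ (respectively $c^{-1}(b)$). One must also check that the inverse applications of Lemma \ref{lem:hurwitz_swap} are legitimate at the level of bi-coloured cam-sets. The cycle bookkeeping in Step 3 is then routine.
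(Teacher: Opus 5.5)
\textbf{Steps 1 and 2 are sound; the failure is in the last move of Step 3.}

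The identities $b_{n+1-k}=c^{-1}(a)$ and $a_k=c^{-1}(b)$ follow from the definition of $c(S,\{A,B\})$. The conditions $|B|\ge 2$ and $|A|\ge 2$ are exactly what Lemma \ref{lem:hurwitz_swap_extremity} needs, and each application moves one endpoint one step backwards along $c$. There is one small slip: the shape with the interior $B$-vertices \emph{after} the interior $A$-vertices does not put $a_k$ next to $b$. Your first shape, with the interior $B$-vertices first, does both jobs at once.

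Stepping $b$ backwards is allowed only while $c^{-1}(b)\neq a$, and at that stage $a=a'$. So $b$ visits $b, c^{-1}(b),\dots,c(a')$ and is then blocked. It can never cross $a'$, so it never reaches the arc $c(b),\dots,c^{-1}(a')$. Your normalisation $a'\neq b$ does not keep $b'$ out of that arc. Here is a concrete failure. Take $n=3$, $c=(1\,2\,3\,4)$, $a=1$, $b=2$, so the first phase is trivial. Take the target $\{3,4\}$ named $a'=4$, $b'=3$, which your convention allows. Then $a$ steps $1\to 4$, and $b$ steps $2\to 1$ and is blocked because $|A|=1$. So $b$ never reaches $3$.

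\textbf{The repair is only in the bookkeeping.} After the first phase, name as $b'$ the target vertex met first when walking backwards from $b$, taking $b'=b$ if $b$ is a target. Then $a$ walks back from $c^{-1}(b)$ and may pass over $b'$, since only landing on $b$ is forbidden. It stops at $a'$, and afterwards $b$ reaches $b'$ before it would hit $a'$. Alternatively, rotate the adjacent pair by stepping $a$ then $b$ until $b=b'$, then walk $a$ back to $a'$.

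\textbf{Comparison with the paper.} With this repair your route is a valid alternative to the paper's proof. The paper first applies rule $(b)$ $np$ times, that is conjugation by $c^p$. This rotates both endpoints forward so that $a'$ becomes an endpoint. It then walks only the other endpoint backwards, by induction on $|k-k'|$, and dismisses the case $k<k'$ by symmetry. That global rotation removes exactly the asymmetry that broke your navigation. Your version avoids the rotation and uses only the two single-endpoint backward moves, at the price of this more delicate cycle bookkeeping.
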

    \begin{proof}
        Let $c = c(S, \{A, B\})$ be the Coxeter element associated to the bi-colored cam-set. Recall that applying rule $(b)$ $n$ times has the effect of conjugating the cam-set by the Coxeter element $c$. The effect on the cam-set is to apply $c$ to all vertices, making $c(a)$ and $c(b)$ the new endpoints. Since $c$ is an $(n+1)$-cycle, $a'=c^p(a)$ for some $p$, then by applying rule $(b)$ $n\times p$ times, we can reach a bi-colored cam-set: $(S, \{A, B\}) \tobs (S_1, \{A_1, B_1\})$ with $S_1 = {}^{c^p}S$ so $a'$ is an endpoint of $S_1$. Then it is enough to prove the lemma in the case where $a'=a$.\\
        Let $1 \le k < n$ be such that $c^k(a) = b$, ($k = |A|$), and $k'$ such that $c^{k'}(a) = b'$. If $k = k'$, then $b=b'$ and we are done. We prove the result by induction on $|k-k'|$. If $|k-k'|=0$, then we are done.\\
        Assume now that $|k-k'| > 0$. Write $x_0=a, x_1, \ldots, x_n=b$ the vertices of $S$ from $a$ to $b$. We have to distinguish two cases, if $b' \in A$ or $b' \in B$, or equivalently, if $k < k'$ or $k' < k$.\\
        Assume first that $b \in A$, i.e. $k' < k$. Let $x_j = c^{k-1}(a)$ be the last element of $A$ (distinct from $a$ since $b'\ne a$ is in $A$). Then $x_i \in B$ for all $i > j$. Thus, by Lemma \ref{lem:hurwitz_swap}, we can swap $x_j$ successively with the vertices $x_{j+1}, x_{j+2}, \ldots, x_{n-1}$, i.e. $(S, \{A, B\}) \tobs (S_2, \{A, B\})$ where $$S_2= a \longline x_1, \dots \longline x_{j-1} \longline x_{j+1} \longline x_{j+2} \longline \cdots \longline x_{n-1} \longline x_j \longline b.$$ Now, $x_j$ is in position $n-1$, and since $x_j \in A$, by Lemma \ref{lem:hurwitz_swap_extremity}, we have $(S_2, \{A, B\}) \tobs (S_3, \{A \setminus \{x_j\}, B \cup \{x_j\}\})$ where $$S_3= a \longline x_1 \longline \cdots \longline x_{j-1} \longline x_{j+1} \longline x_{j+2} \longline \cdots \longline x_{n-1} \longline b \longline x.j$$ In $S_3$, the endpoints are $a$ and $c^{k-1}(a)$, so by induction hypothesis, there exists a bi-colored cam-set $(S', \{A', B'\})$ with endpoints $a$ and $b'$, such that $(S_3, \{A \setminus \{x_j\}, B \cup \{x_j\}\}) \tobs (S', \{A', B'\})$. We conclude that $(S, \{A, B\}) \tobs (S', \{A', B'\})$.\\
        The case where $b' \in B$, i.e. $k < k'$, reduces to the previous one by inverting the roles of $A$ and $B$.
    \end{proof}
    Now, all the work is done to prove the main proposition:
    \begin{proof}[Proof of Proposition \ref{prop:hurwitz_cam-sets}]
        Let $(S, \prec)$ and $(S', \prec')$ be two ordered cam-sets such that $c(S, \prec) = c(S', \prec')$. Let $(S, \{A, B\})$ and $(S', \{A', B'\})$ be the associated bi-colored cam-sets. Let $a, b$ (resp. $a', b'$) be the endpoints of $S$ (resp. $S'$). By Lemma \ref{lem:hurwitz_change_endpoints}, there exists a bi-colored cam-set $(S_1, \{A_1, B_1\})$ with endpoints $a', b'$, such that $(S, \{A, B\}) \tobs (S_1, \{A_1, B_1\})$. Moreover, since $c(S_1, \{A_1, B_1\}) = c(S, \prec) = c(S', \prec')$, by Lemma \ref{lem:hurwitz_fixed_endpoints}, we have $(S_1, \{A_1, B_1\}) \tobs (S', \{A', B'\})$. Thus, $(S, \{A, B\}) \tobs (S', \{A', B'\})$, which in other words means that there exists orderings $\tilde{\prec}$ and $\tilde{\prec}'$ such that $(S, \tilde{\prec}) \tobs (S', \tilde{\prec}')$ with associated bi-colored cam-sets $(S, \{A, B\})$ and $(S', \{A', B'\})$. Finally, by Lemma \ref{lem:hurwitz_a_rule}, we have $(S, \prec) \toas (S, \tilde{\prec})$ and $(S', \tilde{\prec}') \toas (S', \prec')$, so we conclude that $(S, \prec) \eqab (S', \prec')$.
    \end{proof}

\section{Results in general spherical type}
    We fix $W$ a finite irreducible geometric reflection group of rank $n$. Let $T$ be the set of all reflections of $W$. We start by investigating question \ref{question:how_many} in general type, then we will prove Theorem \ref{thm:always_eqab} in general type.\\
    \subsection{Weighted adjacency matrix}
        In order to count the number of cam-sets with which a cameral Coxeter element can be written, we introduce, like we did in type $A$, the \emph{weighted adjacency matrix}
        \begin{definition}
            Let $W$ be a finite irreducible geometric reflection group of rank $n$. The \emph{weighted adjacency matrix} is the rectangular matrix $M$ with columns indexed by cam-sets and rows indexed by cameral Coxeter elements, such that $M_{c,S}$ is the cardinal of $\{\sigma \in \Sym_n | s_{\sigma(1)}\ldots s_{\sigma(n)} = c\}$ where $S=\{s_1,...,s_n\}$, or equivalently, the cardinal of $\{(t_1,\ldots, t_n) \in \Red^c_T(c) | \{t_1, \ldots t_n\} = S\}$.
        \end{definition}
        The answer to question \ref{question:how_many} is contained in the matrix $M$. Given a cameral Coxeter element $c$, the number of cam-sets with which it can be written is the number of non-zero entries on row $c$. Since all cameral Coxeter elements are conjugated, this number does not depend on the choice of $c$, and can be computed by dividing the total number of non-zero entries in $M$ by the number of rows $\C^c(W)$. On the other hand, each column corresponds to a cam-set, and they are all conjugated, so they have the same number of non-zero entries. Thus, the total number of non-zero entries is equal to the number of columns, i.e. the number of cam-sets $|W|/2$, multiplied by the number of non-zero entries in one column, that is, the number of simple Coxeter elements with respect to a cam-set $S$ : $|\C(W,S)|$. Let $S=\{s_1,\ldots ,s_n\}$ be a fixed cam-set. Then the entries on column $S$ of the adjacency matrix correspond to the fibers of the map $\phi:\Sym_n \to \C^c(W), \sigma \mapsto s_{\sigma(1)}\ldots s_\sigma(n)$, and we want to know the number of non-empty fibers. Note that for investigating question \ref{question:how_many}, we are only interested in non-zero entries. The values of the entries will be useful for investigating question \ref{question:hur_transitive} in Lemma \ref{lem:count_total_M} where we count the sum of the entries. Finding the number of distinct Coxeter elements that can be obtained from a generating set $S$ is a problem of classic Coxeter theory, that we present now.

    \subsection{Orientations of the Coxeter Diagram}\label{sec:orientation_diagram}
        We fix a cameral generating set $S_0=\{s_1, \ldots s_n\}$, associated with a chamber $C_0$. Let $\Gamma_0$ be the graph with vertex set $S_0$ and with edges $s,t$ for all $s\ne t \in S_0$ such that $st\ne ts$ (we don't need to carry the data of the label on the edges here). In order to count the simple Coxeter element for $S_0$, we will use the fact that there is a one-to-one correspondence between distinct Coxeter elements of $\C(W, S_0)$, and orientations of the Coxeter diagram $\Gamma_0$. This result is attributed to \cite{erikssonChipfiringCoxeterElements1994} and was first published in \cite{shiEnumerationCoxeterElements1997}. We define the notion of acyclic orientation of a graph.
        \begin{definition}
            An \emph{orientation} of a graph $\Gamma = (V, E)$ is a choice of direction for each edge $\{u, v\} \in E$, i.e. a directed graph (or \emph{quiver}) $(V,O)$ where $O$ is a set containing one oriented pair $(u, v)$ (also denoted $u\to v$) for each edge $\{u,v\} \in E$. It is called \emph{acyclic} when $(V,O)$ has no directed cycle. Note that if $\Gamma$ is a tree or even a forest i.e. has no cycle, then any orientation is acyclic. A \emph{sink} (resp. a \emph{source}) is a vertex that has no outcoming (resp. incoming) edges. The set of acyclic orientations of $\Gamma$ is denoted $\Acyc(\Gamma)$.
        \end{definition}
        The results of \cite{shiEnumerationCoxeterElements1997} holds for any type (even non-spherical), and can be stated as the following :
        \begin{theorem}[\cite{shiEnumerationCoxeterElements1997}, Theorem 1.5]
            There exists a bijection between $\C(W, S_0)$ and $\Acyc(\Gamma_0)$.
        \end{theorem}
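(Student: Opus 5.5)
We define a map $\Phi : \Sym_n \to \Acyc(\Gamma_0)$, where a permutation is read as an ordering $s_{\sigma(1)}, \ldots, s_{\sigma(n)}$ of $S_0$. For each edge $\{s,t\}$ of $\Gamma_0$ we orient it $s \to t$ when $s$ precedes $t$ in the ordering. Every orientation obtained this way is acyclic, since it is compatible with a linear order. Conversely, every acyclic orientation arises from some ordering: take any linear extension of the partial order it generates (a topological sort). So $\Phi$ is surjective onto $\Acyc(\Gamma_0)$.

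The plan is to show that the product $c(\sigma) = s_{\sigma(1)}\cdots s_{\sigma(n)}$ factors through $\Phi$, and that the induced map $\Acyc(\Gamma_0) \to \C(W,S_0)$ is a bijection.

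\emph{Well-definedness.} Two orderings with the same orientation differ by a sequence of transpositions of adjacent elements that are non-adjacent in $\Gamma_0$, that is, commuting reflections. This is the same argument as in Lemma \ref{lem:hurwitz_a_rule}. Bring the first element of the target ordering to the front of the source ordering. Every element it passes is later than it in the source, but earlier than or equal to it in the target. If such an element were adjacent to it in $\Gamma_0$, the common orientation would give contradictory directions on that edge. So each swap exchanges commuting reflections, the product is unchanged, and we conclude by induction.

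\emph{Surjectivity onto $\C(W,S_0)$.} This is immediate, since every simple Coxeter element is $c(\sigma)$ for some $\sigma$.

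\emph{Injectivity.} This is the main obstacle: we must recover the orientation from the element $c$. We try a length argument first. Since $(W,S_0)$ is a Coxeter system and each product uses every generator exactly once, $\ell_{S_0}(c)=n$. We claim that $s$ is a source of the orientation exactly when $\ell_{S_0}(sc) < \ell_{S_0}(c)$.

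\begin{itemize}
\item If $s$ is a source, we can move it to the front of an ordering realizing the orientation, so $sc$ has length $n-1$.
\item Conversely, suppose $\ell_{S_0}(sc)<\ell_{S_0}(c)$. By the exchange condition, $s\cdot s_{\sigma(1)}\cdots s_{\sigma(n)} = s_{\sigma(1)}\cdots \widehat{s_{\sigma(i)}}\cdots s_{\sigma(n)}$ for some $i$. Comparing supports, which are well defined for reduced words by Matsumoto's lemma and Tits' word property, forces $s_{\sigma(i)}=s$. We also get $s\, s_{\sigma(1)}\cdots s_{\sigma(i-1)} = s_{\sigma(1)}\cdots s_{\sigma(i-1)}\, s$.
\item From this commutation relation, if some $s_{\sigma(j)}$ with $j<i$ were adjacent to $s$, the element $s_{\sigma(1)}\cdots s_{\sigma(i-1)}$ would have two distinct reduced expressions with different multisets of letters. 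This is impossible, because reduced expressions of an element are connected by braid moves, and a braid move involving $s$ with $m_{s,t}\ge 3$ needs $s$ to appear at least twice. So no neighbour of $s$ precedes it, and $s$ is a source.
\end{itemize}

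Thus the set of sources is determined by $c$. Removing a source $s$ and passing to $sc$, which is a simple Coxeter element of the standard parabolic subgroup $W_{S_0\setminus\{s\}}$ with the induced orientation, lets us determine the orientation by induction on $n$. The orientation of an edge $\{s,t\}$ is read off as $s\to t$ when $s$ is removed before $t$. This requires checking that the recovered relative order of adjacent vertices does not depend on the choice of source at each step. That holds because the orientation of every edge is already fixed in every ordering realizing the orientation, so any sequence of source removals yields the same orientation.

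If the length argument becomes unwieldy, the fallback is the geometric route of Shi and Eriksson. There one reads the orientation of an edge $\{s,t\}$ from the action of $c$ on the simple roots $\alpha_s,\alpha_t$: one checks whether $c^{-1}(\alpha_s)$ involves $\alpha_t$ with a nonzero coefficient, using $s_i(\alpha_j)=\alpha_j-2(\alpha_i|\alpha_j)\alpha_i$. Either way, the source-characterization step is where the genuine content lies. Well-definedness and surjectivity are routine.
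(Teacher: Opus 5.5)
Your plan is right, and it is the structure the paper uses when it cites Shi: show that the product map and the orientation map on orderings of $S_0$ have the same fibers. The well-definedness half is fine, except that one sentence is reversed. The letters that the first letter of the target ordering passes when it is moved to the front are \emph{earlier} than it in the source and later in the target; the contradiction with a common orientation is the same.

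The gap is in the third item of your injectivity argument. From $sx=xs$ with $x=s_{\sigma(1)}\cdots s_{\sigma(i-1)}$, you assert that $x$ has two reduced expressions with different multisets of letters. Nothing produces such expressions: $x$ does not involve $s$ at all, and the word $s\,x\,s$ is not reduced. Likewise, ``a braid move involving $s$ needs $s$ twice'' is not the invariant you need. As written, this step does not show that no neighbour of $s$ precedes it, so the source characterization is not yet established.

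The repair uses the same tools, applied to the right element. The element $sx=xs$ has the two reduced words $s\,s_{\sigma(1)}\cdots s_{\sigma(i-1)}$ and $s_{\sigma(1)}\cdots s_{\sigma(i-1)}\,s$; both are reduced because their letters are distinct. By Matsumoto they are joined by braid moves. A move with $m\ge 3$ needs a factor with a repeated letter, so by induction every word in the chain has distinct letters and every move is a commutation of adjacent commuting letters. Commutations never reverse the relative order of two non-commuting letters. Since $s$ is first in one word and last in the other, every letter of $x$ commutes with $s$.

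This observation, applied directly to $c$, gives injectivity at once. Two orderings with the same product give two reduced words of $c$ with distinct letters. These are related by commutations only, so they induce the same orientation. The descent-set characterization of sources and the induction over $W_{S_0\setminus\{s\}}$ then become unnecessary. Your worry about the choice of source also disappears: for injectivity you would only need one common source of the two orientations, and one exists because both source sets equal $\{s:\ell(sc)<\ell(c)\}$.
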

        Precisely, there are two maps $f$ and $g$ defined on the set of orderings of $S_0$. The first one, $f$, sends the ordering $(S_0, \prec)$ to the product in increasing order with respect to $\prec$, which belongs to $\C(W, S_0)$. The second map $g$, sends $(S_0, \prec)$ to the orientation of $\Gamma_0$ where each edge is oriented towards the highest vertex with respect to $\prec$. What happens is that both these maps are surjective, and have the same fibers.\par
        Counting these orientations gives that each column has $2^{|E|}$ non-zero entries, and we know the number of column so we get the numbers of non-zero entries in the adjacency matrix.  In order to count by row, we also need the number of rows :
        \begin{lemma}\label{lem:n_cam_sets}
            We have $|\C^c(W)|=|W|/h$
        \end{lemma}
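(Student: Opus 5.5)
The plan is to use the fact, recorded in Proposition \ref{prop:cam-set_one_orbit}, that $\C^c(W)$ is a single conjugacy class of $W$. Then $|\C^c(W)| = |W|/|Z_W(c)|$ for any cameral Coxeter element $c$, where $Z_W(c)$ is the centralizer of $c$ in $W$. The statement is therefore equivalent to showing that $Z_W(c)$ has order exactly $h$, i.e. that $Z_W(c)=\langle c\rangle$. Since $c$ has order $h$ (the Coxeter number), we already have $\langle c\rangle\subset Z_W(c)$ and $|\langle c\rangle| = h$. The whole work is the reverse inclusion.

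For the reverse inclusion I will invoke the classical theory of Coxeter elements in finite irreducible real reflection groups (Bourbaki, chap.~V, \S 6, and Springer's theory of regular elements). First, a cameral Coxeter element $c$ acting on $V\otimes\mathbb{C}$ has an eigenvalue $\zeta=e^{2i\pi/h}$ with a regular eigenvector $v$, that is, $v$ lies on no reflecting hyperplane. This is Bourbaki's construction of the Coxeter plane: one bicolours the cam-set into two sets of pairwise commuting reflections (a chromatic pair) and takes the product of the two involutions. This suffices, because all elements of $\C^c(W)$ are conjugate. Second, for any $w\in Z_W(c)$, $w$ preserves the $\zeta$-eigenspace of $c$. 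In Bourbaki's argument this eigenspace is a line (the eigenvalue $\zeta$ has multiplicity $1$, exponent $1$ appearing once), so $w v=\lambda v$ for some scalar $\lambda$.

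Next, by Springer's theorem the centralizer of a regular element acts faithfully on the regular eigenspace, because the stabilizer in $W$ of a regular vector is trivial (Steinberg's fixed-point theorem). Hence $Z_W(c)$ embeds into the group of roots of unity $\lambda$ occurring, which is cyclic. Springer also shows that this centralizer has order equal to the order of the regular element, namely $h$. Alternatively, I would argue directly that the order of $Z_W(c)$ divides the lcm of the degrees congruent to $0$ mod $h$, which is just $h$ since the largest degree is $h$. So $|Z_W(c)|=h$ and $|\C^c(W)|=|W|/h$.

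The main obstacle is the step $Z_W(c)=\langle c\rangle$. A self-contained proof is not available from the paper's earlier results, so I would cite Springer's theorem on regular elements (centralizer of a $\zeta_d$-regular element has order equal to the product of degrees divisible by $d$; for $d=h$ only the degree $h$ qualifies), together with Bourbaki's proof that Coxeter elements are regular. As a consistency check, in type $A_n$ this gives $(n+1)!/(n+1)=n!$ Coxeter elements, matching the count of $(n+1)$-cycles in the previous section.
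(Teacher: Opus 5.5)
Your proposal is correct and follows the paper's proof: you apply the orbit--stabilizer formula to the single conjugacy class $\C^c(W)$, and then use that $Z_W(c)=\langle c\rangle$ has order $h$. The only difference is the reference for the centralizer fact. You use Springer's theory of regular elements, which is stated for all finite reflection groups, so it covers the non-crystallographic types directly; irreducibility is what makes the degree $h$ occur only once. The paper instead cites Carter's Proposition~30, which is stated for Weyl groups, and simply asserts that the argument extends to Coxeter groups.
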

        \begin{proof}
            Recall that $h$ is the order of any Coxeter element, called the "Coxeter number".\par
            Since all cameral Coxeter elements are conjugate, the orbit-stabilizer formula for the conjugacy action gives $|\C^c(W)| = \dfrac{|W|}{|C_W(c_0)|}$ where $C_W(c_0)$ is the centralizer of a Coxeter element $c_0$ in $W$. The centralizer of a Coxeter element is the set of its power. The proof of \cite{carterConjugacyClassesWeyl1972} Proposition 30, is stated for Weyl groups but works well for Coxeter groups. We then have $C_W(c_0) = \langle c_0 \rangle$ so $|C_W(c_0)| = h$.
        \end{proof}
        The matrix $M$ allows us to compute for how many chambers is a cameral Coxeter element simple :
        \begin{corollary}\label{coro:numbers}
            There are exactly $2^{n-1}$ simple Coxeter elements for $S_0$. For every Coxeter element $c$, there are exactly $h\times 2^{n-2}$ cam-sets for which $c$ is simple, and thus there are $h\times 2^{n-1}$ chambers from which we can obtain $c$.
        \end{corollary}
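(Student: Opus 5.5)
The plan is a double count of the non-zero entries of the weighted adjacency matrix $M$. Rows are indexed by $\C^c(W)$ and columns by cam-sets.

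\textbf{Columns.} By the theorem of \cite{shiEnumerationCoxeterElements1997}, $\C(W,S_0)$ is in bijection with $\Acyc(\Gamma_0)$. Since $W$ is finite and irreducible, its Coxeter diagram $\Gamma_0$ is a tree on $n$ vertices. So every orientation is acyclic, and there are $2^{|E(\Gamma_0)|}=2^{n-1}$ of them. This gives the first claim: $|\C(W,S_0)|=2^{n-1}$.

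By Proposition \ref{prop:cam-set_one_orbit}, all cam-sets are conjugate, and conjugation by $w$ maps $\C(W,S)$ bijectively onto $\C(W,{}^wS)$. Hence every column of $M$ has exactly $2^{n-1}$ non-zero entries.

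\textbf{Number of columns.} The group $W$ acts simply transitively on chambers, so there are $|W|$ chambers. By Remark \ref{ref:chamber_to_cam-set}, each cam-set corresponds to exactly $2^k$ chambers, where $k$ is the number of connected components of the diagram; here $k=1$. I would justify this as follows. If $S(C)=S(wC)$, then $w$ normalizes $S$ and so permutes the walls of $C$. The chamber $wC$ is bounded by the same hyperplanes as $C$, so $wC$ lies in the same cone cut out by those walls up to sign choices. In the irreducible case the only chambers with the same set of walls are $C$ and $-C$. The number of cam-sets is therefore $|W|/2$, and the total number of non-zero entries of $M$ is $2^{n-1}\cdot|W|/2=|W|\,2^{n-2}$.

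\textbf{Rows.} Conjugation by $w\in W$ permutes the rows (cameral Coxeter elements) and the columns (cam-sets) compatibly, with $M_{{}^wc,{}^wS}=M_{c,S}$. By Proposition \ref{prop:cam-set_one_orbit}, $W$ is transitive on rows, so every row has the same number $r$ of non-zero entries. By Lemma \ref{lem:n_cam_sets} there are $|W|/h$ rows. Therefore
$r\cdot |W|/h=|W|\,2^{n-2}$, which gives $r=h\,2^{n-2}$ cam-sets for which $c$ is simple.

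Proposition \ref{prop:cam_cox_cam_dec} shows that restricting to cam-sets loses nothing among Coxeter decompositions, though the count itself does not need it. Multiplying by the two chambers per cam-set (the correspondence above) gives $h\,2^{n-1}$ chambers.

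\textbf{Main obstacle.} The delicate step is the exact two-to-one correspondence between chambers and cam-sets. Two points need care: $-C$ must be a chamber with the same walls, and no third chamber can share them. For $-C$, the hyperplane arrangement is centrally symmetric, so $-C$ is a chamber with the same walls, and $-C\neq C$. For uniqueness, I would argue as follows. A chamber is the intersection of the open half-spaces determined by its walls. Any chamber with the same $n$ walls is one of the $2^n$ sign-cones of these walls. One then needs that only the all-positive and all-negative cones are chambers, i.e. that every other cone is cut by some reflecting hyperplane. For such a mixed cone, I would exhibit a root with coefficients of both signs in the simple-root basis adapted to the sign pattern, using connectivity of the diagram (a sum of simple roots along a path crossing a sign change is a root). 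Irreducibility enters exactly here.
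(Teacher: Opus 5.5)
Your proof is correct and is essentially the paper's argument: you double-count the non-zero entries of $M$ using $2^{n-1}$ per column (Shi's bijection plus the tree shape of $\Gamma_0$), $|W|/2$ columns, $|W|/h$ rows and conjugation-invariance of row counts, then double for chambers; the only addition is your sketch of the two-to-one chamber/cam-set correspondence, which the paper takes from Remark~\ref{ref:chamber_to_cam-set} without proof. In that sketch, a sum of simple roots along a path need not be a root, or even proportional to one, in the normalized geometric representation (it already fails for $B_2$); for an edge $\{i,j\}$ of the diagram across which the sign changes, use instead the root $s_i(\alpha_j)=\alpha_j+2\cos(\pi/m_{i,j})\alpha_i$, whose two coefficients are both positive.
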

        \begin{proof}
            The Coxeter graphs of all spherical types are trees, so $\Gamma_0$ has $n-1$ edges, so it has $2^{n-1}$ orientations, so there are $2^{n-1}$ simple Coxeter elements for $S_0$. Now, we recall the definition of the weighted adjacency matrix $M$. The columns are indexed by cam-sets and the rows are indexed by Coxeter elements, and $M_{c,S}$ is the number of orderings $(S, \prec)$ such that $c(S, \prec) = c$. What we just saw is that each column has exactly $2^{n-1}$ non-zero entries. Thus, the matrix has in total $2^{n-1}\times|W|/2$ non-zero entries. We now divide by the number of rows, given by Lemma \ref{lem:n_cam_sets}. Each row, has $2^{n-1}\times(|W|/2)/|\C^c(W)|=2^{n-2}\times h$ non-zero entries. There are exactly $h\times 2^{n-2}$ cam-sets for which $c$ is simple, and thus there are $h\times 2^{n-1}$ chambers from which we can obtain $c$ since two opposite chambers give correspond to one cam-set.
        \end{proof}
        \begin{remark}
            In the case where $W$ is not irreducible, there are $2^{n-k}$ edges in $\Gamma_0$, so $2^{n-k}$ simple Coxeter elements for a cam-set $S$, where $k$ is the number of irreducible components. But there are also $2^k$ chambers for each cam-set, thus there are still $h\times 2^{n-1}$ chambers from which we can obtain $c$.
        \end{remark}
        For a fixed cam-set, multiple Coxeter elements can be obtained with the different orientations of the Dynkin diagram, but we will see that for a fixed Coxeter element, we can link the Hurwitz action and the quiver mutation.
    \subsection{Hurwitz action and quiver mutation}
        A classic operation that can be performed on the orientations of the Coxeter diagram is the switch operation or vertex mutation in the vocabulary of quivers: we chose a sink and turn it in a source by inverting all the entering edges. Starting from a $T$-decomposition of some cameral Coxeter element $c$ on a cam-set $S$, if we mutate a sink $s$, and, at the same times, we conjugate all the vertices (and thus the cam-set) by the sink vertex $s$, we get a new $T$-decomposition of $c$ on the cam-set ${}^sS$, geometrically, we changed the chamber $C$ by going through the wall $K(s)$. If $s$ was at the end of the $T$-decomposition, then we exactly performed a Hurwitz move of type $(b)$.
        \begin{example}
            Let $W$ be the Coxeter group of type $B_3$ :
            $\begin{tikzcd}
                a && b && c
                \arrow[no head, from=1-1, to=1-3]
                \arrow["4", no head, from=1-3, to=1-5]
            \end{tikzcd}$
            We can visualize the quiver mutation on the stereographic projection of the chambers as in Example \ref{ex:A3}. For type $B_3$, the intersection of the unit sphere with the chambers can be deformed as a cube, where each face is divided in $8$ regions. Starting with a chamber $C_0$ with cam-set $S_0 = \{s, t, u\}$, with $m_{st} = 3$ and $m_{tu} = 4$ and $m_{su} = 2$, and with the Coxeter element $c_0 = stu$ corresponding to the quiver orientation $s\to t \underset{4}{\to} u$. Mutating the sink $u$ gives the new cam-set ${}^uS_0 = \{s, utu, u\}$, and the oriented Dynkin quiver is now $s\to ~utu\underset{4}{\leftarrow}u= {}^u(s\to ~t\underset{4}{\leftarrow}u)$, corresponding to the $T$-decompositions $(s, u, utu)$ and $(u, s, utu)$ of the same Coxeter element $c_0$.
            
            \begin{figure}[ht]
                \centering
                \includegraphics[width=0.5\textwidth]{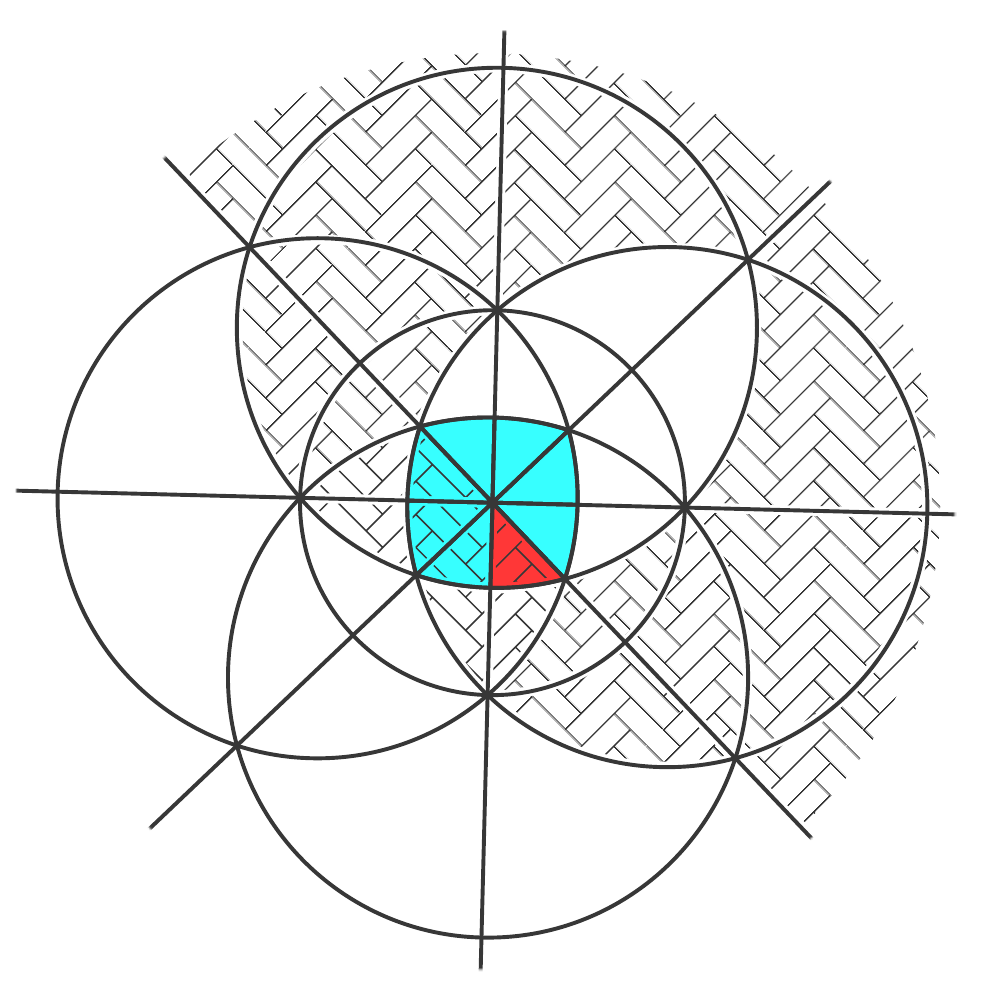}      
                \begin{tikzpicture}[scale=1.2]
                    \filldraw[fill={rgb,255:red,0;green,255;blue,255}, draw=white, line width=1.5pt] (2,2) -- (0,0) -- (2,0) -- cycle;
                    \filldraw[fill={rgb,255:red,255;green,0;blue,0}, draw=white, line width=1.5pt] (2,2) -- (2,0) -- (4,0) -- cycle;
                    \filldraw[fill={rgb,255:red,0;green,255;blue,255}, draw=white, line width=1.5pt] (2,2) -- (4,0) -- (4,2) -- cycle;
                    \filldraw[fill={rgb,255:red,0;green,255;blue,255}, draw=white, line width=1.5pt] (2,2) -- (4,2) -- (4,4) -- cycle;
                    \filldraw[fill={rgb,255:red,0;green,255;blue,255}, draw=white, line width=1.5pt] (2,2) -- (4,4) -- (2,4) -- cycle;
                    \filldraw[fill={rgb,255:red,0;green,255;blue,255}, draw=white, line width=1.5pt] (2,2) -- (2,4) -- (0,4) -- cycle;
                    \filldraw[fill={rgb,255:red,0;green,255;blue,255}, draw=white, line width=1.5pt] (2,2) -- (0,4) -- (0,2) -- cycle;
                    \filldraw[fill={rgb,255:red,0;green,255;blue,255}, draw=white, line width=1.5pt] (2,2) -- (0,2) -- (0,0) -- cycle;
                    \draw (-0.354,-0.354) -- (4.354,4.354);
                    \node at (-0.354,-0.354) [below left] {${}^ut$};
                    \node at (4.354,4.354)  [above right] {${}^ut$};

                    \draw (4.354,-0.354) -- (-0.354,4.354);
                    \node at (4.354,-0.354) [below right] {$t$};
                    \node at (-0.354,4.354) [above left]  {$t$};

                    \draw (2,-0.5) -- (2,4.5);
                    \node at (2,-0.5) [below] {$u$};
                    \node at (2,4.5)  [above] {$u$};

                    \draw (-0.5,2) -- (4.5,2);
                    \node at (-0.5,2) [left]  {${}^tu$};
                    \node at (4.5,2)  [right] {${}^tu$};

                    \draw (-0.5,0) -- (4.5,0);
                    \node at (-0.5,0) [left]  {$s$};
                    \node at (4.5,0)  [right] {$s$};

                    \draw (4,-0.5) -- (4,4.5);
                    \node at (4,-0.5) [below] {${}^ts$};
                    \node at (4,4.5)  [above] {${}^ts$};

                    \draw (4.5,4) -- (-0.5,4);
                    \node at (4.5,4)  [right] {${}^{tut}s$};
                    \node at (-0.5,4) [left]  {${}^{tut}s$};

                    \draw (0,4.5) -- (0,-0.5);
                    \node at (0,4.5)  [above] {${}^{ut}s$};
                    \node at (0,-0.5) [below] {${}^{ut}s$};

                    \node at (1.8,1.5) {$u$};
                    \node at (1.25,0.40) {$\blue{s\to t\underset{4}{\leftarrow} u}$};

                    \node at (2.2,1.5) {$1$};
                    \node at (2.75,0.40) {$\blue{s\to t\underset{4}{\to} u}$};

                    \node at (2.5,1.8) {$t$};
                    \node at (3.33,1.33) {};

                    \node at (2.6,2.25) {$tu$};
                    \node at (3.33,2.67) {};

                    \node at (2.33,2.67) {$tut$};
                    \node at (2.67,3.33) {};

                    \node at (1.67,2.67) {$tutu$};
                    \node at (1.33,3.33) {};

                    \node at (1.5,2.2) {$utu$};
                    \node at (0.75,2.4) {$\blue{s\leftarrow t\underset{4}{\leftarrow} u}$};

                    \node at (1.5,1.8) {$ut$};
                    \node at (0.75,1.4) {$\blue{s\leftarrow t\underset{4}{\to} u}$};

                \end{tikzpicture}
                \caption{The stereographic projection of the chambers in type $B_3$ and the illustration of how quiver mutations of the oriented Dynkin diagram allows traveling through the chambers in which the Coxeter element $c_0$ is simple (the chamber with the tiles), starting from $C_0$ in red. For example, in chamber ${}^{utu}C_0$, the Coxeter element $c_0$ is simple and the $T$-decomposition $({}^{utu}u, {}^{utu}t, {}^{utu}s)$ can be obtained from the mutated quiver : ${}^{utu}(\blue{s\leftarrow t \underset{4}{\leftarrow} u})$. In chamber ${}^tC_0$, the Coxeter element $c_0$ is not simple.}    
            \end{figure}
        \end{example}
        \begin{remark}
            This remark is meant as a starting point for exploration. In general, a quiver mutation of any vertex $v$ (not necessarily a sink) is performed by inverting the entering and exiting edges after adding one edge from $x$ to $y$ for every pair of edge $x\to v, v\to y$, then simplifying if there are some edges in two ways between two given vertices. The underlying unoriented graph is changed, and may contain cycles, and it is not clear what the corresponding transformation should be on the $T$-decomposition of the Coxeter element. For example, in type $A_3$, mutating $t$ in the quiver $s\longrightarrow t \longrightarrow u$ gives a cycle. On the other hand, there is a $T$-decomposition of $c=stu$ with the underlying set $S'=\{s, usu, t\}$, that is not a Coxeter generating set, (see Example \ref{exa:hurwitz_preserv_no_cox}), and if we draw the graph with vertices $S'$ and edges for non commuting vertices, we get a cycle, which is the underlying graph of the quiver obtained by mutating the orientation corresponding to $c$. These few facts may indicate that there might be a link between the graphs obtained from drawing edges between non commuting vertices of any $T$-decomposition of $c$ and the underlying graphs of all possible quivers obtained by general mutation starting from an orientation of the Coxeter diagram.
        \end{remark}
        
        In our setup, performing mutation only on sinks gives a way of traveling through different cam-sets for which the Coxeter element $c$ is simple. We might want to know if this is transitive, i.e. if any cameral decomposition of $c$ can be obtained this way. This is actually equivalent with the fact that any two cameral decomposition of the same Coxeter element are $\eqab$-equivalent as defined at the end of Section \ref{sec:hurwitz_on_dec}. Indeed, the missing piece is that quiver mutation on an orientation of the Dynkin diagram of a cam-set $S$ is allowed on any sink, while Hurwitz move of type $(b)$ on a $T$-decomposition is only allowed on the last factor. However, for any decomposition, any sink in the associated orientation commutes with all the generators on its right, so up to swapping commuting generators, one can take the sink $s$ and push it to the right of the decomposition. Then, applying the braid $\C = \braid_1 \cdots \braid_{n-1}$ conjugates the cam-set by $s$ and puts $s$ in the first position, that is, as a source. Thus, whether quiver mutation (with conjugacy by the mutated vertex) is transitive on cameral decomposition is equivalent to whether $\eqab$ has only one equivalence class per cameral Coxeter element. The positive answer is obtained in the following by counting the size of the orbit of a decomposition under these transformations.

    \subsection{Partial action on cameral decompositions}
        Our main result is the following.
        \begin{theorem}\label{thm:always_eqab}
            Let $W$ be a finite irreducible geometric reflection group of rank $n$. Let $c_0$ be a cameral Coxeter element, and let $u, v$ be two cameral decompositions in $\Red_{T}^c(c_0)$. Then $u \eqab v$.
        \end{theorem}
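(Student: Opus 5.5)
The strategy is a counting argument: show that the $\eqab$-class of a single cameral decomposition has at least $n!\,h/2$ elements, and that $|\Red_T^c(c_0)|$ equals exactly $n!\,h/2$. Since the class is contained in $\Red_T^c(c_0)$, the two sets must then coincide. Here rule $(b)$ preserves the product by the Hurwitz identity, and rule $(a)$ trivially preserves it.

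\emph{Step 1: total count.} By Proposition \ref{prop:cam-set_one_orbit}, all cameral Coxeter elements are conjugate, and conjugation by $w$ gives a bijection $\Red_T^c(c)\to\Red_T^c({}^wc)$. Hence every row of the weighted adjacency matrix $M$ has the same sum. The total sum of $M$ is the number of cam-sets times $n!$, that is $n!\,|W|/2$; the cam-sets are indexed by chambers modulo the opposition $C\mapsto -C$, which is valid in irreducible type. By Lemma \ref{lem:n_cam_sets} there are $|W|/h$ rows. Therefore
\[|\Red_T^c(c_0)|=\frac{n!\,|W|/2}{|W|/h}=\frac{n!\,h}{2}.\]

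\emph{Step 2: lower bound on an orbit.} Fix $u=(t_1,\ldots,t_n)\in\Red^c_T(c_0)$ with cam-set $S$ and Coxeter graph $\Gamma_S$, which is a tree. For an ordering $f$ of $S$, rule $(a)$ is exactly a transposition $(i\ i+1)$ of two consecutive non-adjacent vertices. Rule $(b)$ changes the underlying cam-set, but applying it $n$ times conjugates everything by $c_0$ and returns to the cam-set ${}^{c_0}S$ with the cyclically rotated ordering.

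I would transport Lemma \ref{lem:partial_action_tree} as follows. Consider the set of orderings of $\Gamma$, where $\Gamma$ is the abstract tree obtained by identifying ${}^{w}S$ with $S$ via conjugation along the move sequence. On this set, rule $(a)$ acts as $R_\Gamma$, and rule $(b)$ acts as the cycle $C$ applied to labels, followed by relabelling via conjugation by the moved reflection. Conjugation preserves commutation, so $(a)$-moves after a $(b)$ correspond to $(a)$-moves on the original labels. The lemma then says that all $n!$ label-orderings are reachable.

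To pin down the conjugating element, I would track it: a decomposition reached from $u$ has the form $({}^{w}s_{\tau(1)},\ldots,{}^{w}s_{\tau(n)})$. Its product being $c_0$ forces ${}^{w}(s_{\tau(1)}\cdots s_{\tau(n)})=c_0$. By the Shi bijection, $\tau$ runs through $n!$ orderings realizing all $2^{n-1}$ acyclic orientations, and for each orientation the corresponding $w$ is determined up to right multiplication by the centralizer of that Coxeter element, which has order $h$ by Lemma \ref{lem:n_cam_sets}. So the goal is to show that for each orientation all these cosets are reached. Cyclic $(b)$ moves generate powers of $c_0$, giving the $h$ conjugates ${}^{c_0^k}$. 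Via the lemma, $(b)$ steps also realize the source/sink mutations, which together with the $(a)$-moves give all $n!$ orderings of the labels.

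The counting should be organized as follows. The reached decompositions contain, for each ordering $\tau\in\Sym_n$ of the abstract tree, the decompositions $\big({}^{c_0^kw_\tau}s_{\tau(i)}\big)_i$ for $k=0,\ldots,h-1$. These should be counted up to the identification $u\mapsto{}^{c_0^{h/2}}u$ or up to the $-1$ symmetry, which accounts for the factor $1/2$. This gives $n!\,h/2$ distinct decompositions; distinctness follows because $W$ acts simply transitively on chambers.

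\emph{Main obstacle.} The hard part is making Step 2 rigorous. Lemma \ref{lem:partial_action_tree} is stated for a fixed vertex set, whereas rule $(b)$ changes the cam-set, so one must define a $\Sym_n$-equivariant identification between the decompositions reached from $u$ and pairs (label-ordering, conjugating element). One must then show that the partial action generated by $(a)$ and $(b)$ corresponds exactly to the generated partial action $\gamma$ of that lemma. After that, the precise count of distinct conjugates, and in particular where the factor $h/2$ rather than $h$ comes from, must match Step 1 exactly. I would first try to prove that the map from reachable decompositions to pairs (ordering of $\Gamma$, chamber modulo opposition) is injective, and then compare cardinalities.
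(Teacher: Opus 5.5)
Your route is the paper's. Step~1 is its double count of the weighted adjacency matrix, and your parametrization $({}^{w}s_{\tau(1)},\ldots,{}^{w}s_{\tau(n)})$ is its map $\phi$. The ``main obstacle'' is handled there exactly as you sketch: one tracks a pair $(w_j,\sigma_j)$ along the move sequence supplied by Lemma~\ref{lem:partial_action_tree}, using that conjugation preserves commutation (cf.\ Lemma~\ref{lem:phi_properties}(iii)--(iv)). Powers of $\C^{n}$ then give $h$ distinct conjugating elements for each $\tau$.

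The one step that is wrong as written is your account of the factor $1/2$. Identifying $u$ with ${}^{c_0^{h/2}}u$ is correct only when $-1\in W$. In that case $c_0^{h/2}=-1$ is central, and the indices $k$ and $k+h/2$ really do give the same decomposition. The identification makes no sense when $h$ is odd ($A_2$, $I_2(5)$). When $-1\notin W$ (e.g.\ $A_3$, $E_6$), the $h$ decompositions obtained for a fixed $\tau$ are pairwise distinct, so identifying them would be invalid.

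What holds in general, and what the paper uses (Lemma~\ref{lem:phi_properties}(ii)), is that $\phi$ is exactly $2$-to-$1$. The fiber over $\phi(w,\tau)$ is $\{(w,\tau),(ww_0,\epsilon\tau)\}$, where $w_0C_0=-C_0$ and $\epsilon$ is the permutation of indices induced by conjugation by $w_0$ on $S_0$. So collisions generally occur between different orderings, and one simply concludes $|\Omega|=|\phi^{-1}(\Omega)|/2\ge n!\,h/2$. Your ``chamber modulo opposition'' is the right idea only if the ordering label is changed by $\epsilon$ at the same time.

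Note also that your own coset remark makes both the fiber analysis and Step~1 unnecessary. Since $C_W(c_0)=\langle c_0\rangle$, for each $\tau$ the set of $w$ with $\phi(w,\tau)\in\Red_T^c(c_0)$ is a single coset $\langle c_0\rangle w_\tau$. The powers of $\C^{n}$ applied to the reached decomposition $\phi(w_\tau,\tau)$ exhaust this coset. Because $\phi$ is surjective, every element of $\Red_T^c(c_0)$ then lies in the orbit directly. The paper instead proves the inequality $|\Omega|\ge n!\,h/2$ and matches it with the exact count of $\Red_T^c(c_0)$. That is why it needs both the $2$-to-$1$ property and the adjacency-matrix computation, whereas your coset observation gives a count-free finish.
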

        Let $\Gamma$ be the Coxeter graph of $(W, S_0)$. Our strategy is to link the action of $B_n$ on ordered cam-sets with the partial action of $\Sym_n$ on ordered trees. Let $\pi$ be the projection of $B_n$ on $\Sym_n$ defined by $\pi(\braid_i) = (i\ i+1)$ (note that we also have $\pi(\C) = C = (1\ 2\ \ldots \ n)$.)
            Let $X = \bigcup_c \Red_T^c(c)$ be the set of all cameral decompositions of all cameral Coxeter elements of $W$. We restate the definition of $\toa, \tob$ in terms of partial actions. Let $R_{\bot} = \{(\braid_i, (t_1, \ldots, t_n)) \in B_n\times X| t_i \bot t_{i+1}\}$. Then $u \toa v$ if and only if there is $b \in B_n$ such that $b.u = v$ and $(b,u)\in R_{\bot}$. Similarly, let $R_\C = \{\C\}\times X$, where $\C = \braid_1 \cdots \braid_{n-1}$. Then for all $u,v\in X$ $u \tob v$ if and only if $\C.u = v$. Then for the transitive closure, let $\gamma_a$ (resp. $\gamma_b$) be the sub-partial action of $B_n$ on $X$ generated by $R_{\bot}$ (resp. $R_\C$). For all $u,v \in X$, $u \toas v$ (resp. $u \tobs v$) if and only if there exists $b \in B_n$ such that $b.u = v$ and $(b,u) \in \domain(\gamma_a)$ (resp. $(b,u) \in \domain(\gamma_b)$). Finally, let $\gamma$ be the sub-partial action of $B_n$ on $X$ generated by $R_{\bot} \cup R_\C$. Then, for all $u,v \in X$, $u \eqab v$ if and only if there exists $b \in B_n$ such that $b.u = v$ and $(b,u) \in \domain(\gamma)$. The claim of Theorem \ref{thm:always_eqab} reduces to the following.
            \begin{proposition}
                $\gamma$ is transitive on $\Red_{T}^c(c_0)$. Its orbits are exactly the sets $\Red_{T}^c(c)$ for all cameral Coxeter elements $c$ of $W$.
            \end{proposition}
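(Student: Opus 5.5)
The plan is to prove two things. First, each orbit of $\gamma$ is contained in a single set $\Red_T^c(c)$. Second, the orbit of one fixed $u\in\Red_T^c(c_0)$ has at least $|\Red_T^c(c_0)|$ elements. Together these force the orbit to equal $\Red_T^c(c_0)$. Since all cameral Coxeter elements are conjugate (Proposition \ref{prop:cam-set_one_orbit}) and conjugation commutes with the Hurwitz action and with both rules, the statement for every $c$ then follows.

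\textbf{Step 1: orbits stay inside one $\Red_T^c(c)$.} The Hurwitz action preserves the product $t_1\cdots t_n$. A generator in $R_\bot$ does not change the underlying set. A generator in $R_\C$ conjugates the underlying cam-set by $t_n$, which gives another cam-set (the chamber crossing its wall $K(t_n)$). By Proposition \ref{prop:generated_partial_action}, every defined $\gamma(b,u)$ is a composite of such moves, so it lies in $\Red_T^c(c)$ with $c=t_1\cdots t_n$.

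\textbf{Step 2: the size of $\Red_T^c(c_0)$.} I would compute the total sum of the weighted adjacency matrix $M$ (this is Lemma \ref{lem:count_total_M}). Each column sums to $n!$, since every ordering of a cam-set gives some cameral Coxeter element. There are $|W|/2$ columns, because chambers correspond to cam-sets two-to-one in irreducible type. By conjugacy all rows have the same sum, and by Lemma \ref{lem:n_cam_sets} there are $|W|/h$ rows. Hence
\[ |\Red_T^c(c_0)| = n!\,\frac{|W|}{2}\cdot\frac{h}{|W|}=\frac{n!\,h}{2}. \]

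\textbf{Step 3: a lower bound for the orbit of $u$.} Write $u=(s_1,\dots,s_n)$ with $S_0=\{s_1,\dots,s_n\}$, and let $\Gamma$ be its Coxeter graph. Every element of the orbit has the form ${}^w(s_{f(1)},\dots,s_{f(n)})$ for some $w\in W$ and some $f\in\orderings(\Gamma)$. I would transport the two rules along this description via $\pi$.
\begin{enumerate}[(i)]
\item A pair in $R_\bot$ at position $i$ corresponds exactly to the transposition $(i\ i+1)$ acting on $f$ with $f(i),f(i+1)$ non-adjacent in $\Gamma$. Here commutation is preserved by conjugation, and $w$ is unchanged.
\item A pair in $R_\C$ corresponds to the cycle $C$ acting on $f$, while $w$ is replaced by $w\,s_{f(n)}$.
\end{enumerate}
Since spherical Coxeter graphs are trees, Lemma \ref{lem:partial_action_tree} shows that every $f\in\orderings(\Gamma)$ is reached. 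Moreover, applying $C^n$ (that is, $n$ applications of rule (b)) conjugates by the current Coxeter element, which is $c_0$ itself because the Hurwitz action preserves the product. So for every $f$ and every $k\in\mathbb{Z}/h$, some element ${}^{w_fc_0^k}(s_{f(1)},\dots,s_{f(n)})$ lies in the orbit, where $w_f$ is the accumulated conjugator. This gives a map $(f,k)\mapsto$ decomposition from a set of size $n!\,h$ into the orbit.

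\textbf{Step 4: the fibres of this map have size at most $2$.} This is the main obstacle. Two pairs give the same tuple only if the two conjugators differ by an element stabilizing $S_0$ setwise. That element maps $C_0$ to $C_0$ or to $-C_0$, so it is $1$ or $w_0$ (when $-1\notin W$ the opposite chamber still gives the stabilizer $\{1,w_0\}$). In that case $f$ and $f'$ differ by the diagram automorphism induced by that element. I would then check two things.
\begin{enumerate}[(i)]
\item The conjugators attached to a fixed $f$ are exactly $w_fc_0^k$; that is, the $w$-component is determined by $f$ up to powers of $c_0$. I would try to prove this by tracking $w$ along the rule (b) moves and using that the product stays $c_0$.
\item Since $C_W(c_0)=\langle c_0\rangle$, distinct $k$ with the same $f$ give distinct tuples, except through the single factor coming from $w_0$.
\end{enumerate}
This yields at least $n!h/2$ distinct elements in the orbit, which matches Step 2 and completes the argument. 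If the fibre bound proves delicate, a fallback is to bound the orbit below by counting cam-sets instead: show that the orbit meets at least $h2^{n-2}$ cam-sets, the number from Corollary \ref{coro:numbers}, and that within each such cam-set every ordering giving $c_0$ is reached by rule (a) alone, in the spirit of Lemma \ref{lem:hurwitz_a_rule}.
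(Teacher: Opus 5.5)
Your proposal is correct and follows essentially the paper's route: double counting the weighted adjacency matrix gives $|\Red_T^c(c_0)|=n!\,h/2$, the tree lemma together with powers of $\C^n$ yields $n!\,h$ distinct pairs $(w,\sigma)\in W\times\Sym_n$ whose tuples ${}^w(s_{\sigma(1)},\dots,s_{\sigma(n)})$ lie in the orbit, and the map $(w,\sigma)\mapsto{}^w(s_{\sigma(1)},\dots,s_{\sigma(n)})$ is exactly $2$-to-$1$ because the conjugation stabilizer of $S_0$ is $\{1,w_0\}$. Your sub-step (i) in Step 4 is not needed for this lower bound, since distinctness of the pairs plus the $2$-to-$1$ property already suffices; also, the conjugators produced by $\C^n$ are $c_0^{k}w_f$ rather than $w_f c_0^{k}$, a slip that does not affect the count.
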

            Let $u_0=(s_1, \ldots ,s_n) \in \Red_T^c(c_0)$, and let $\Omega \subset \Red_T^c(c_0)$ be its orbit under $\gamma$. We want to show that $\Omega = \Red_T^c(c_0)$, and we will prove this by bounding their cardinality. We work first on $\Omega$. Note that all cameral decomposition obtained by the partial action $\gamma$ are ordering of a conjugate cam-set to $S_0$. Given a cameral decomposition $u = (t_1, \ldots t_n)$ the set $S= \{t_1, \ldots t_n\}$ is conjugate to $S_0$ by some element $w \in W$, and a permutation $\sigma \in \Sym_n$ allows one to describe an ordering by $t_i = {}^ws_{\sigma(i)}$. But this correspondence is not one-to-one. Let $\phi : W\times \Sym_n \to X, (w, \sigma) \mapsto ({}^ws_{\sigma(1)}, {}^ws_{\sigma(2)}, \ldots, {}^ws_{\sigma(n)})$.
            \begin{lemma}\label{lem:phi_properties}
                The following properties hold:
                \begin{enumerate}[(i)]
                    \item $\phi$ is surjective
                    \item All fibers have cardinality $2$.
                    \item For all $(\braid_i, u) \in R_{\bot}$, if we write $u = \phi(w, \sigma)$, then $\braid_i.u = \phi(w,  \sigma \circ \pi(\braid_i)^{-1})$.
                    \item For all $u \in X$, if we write $u = \phi(w, \sigma)$, then $\C.u = \phi(s_{\sigma(n)}w, \sigma \circ \pi(\C)^{-1})$.
                    \item For all $u \in \Red_T^c(c)$, if we write $u = \phi(w, \sigma)$, then $\C^n.u = \phi(cw, \sigma)$. 
                \end{enumerate}
            \end{lemma}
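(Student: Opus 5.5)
We verify the five properties in order, using only that $W$ acts simply transitively on chambers, that cam-sets are the sets $S(w.C_0)={}^wS_0$, and the explicit formulas for the Hurwitz action.

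For (i), take $u=(t_1,\ldots,t_n)\in X$. Its underlying set $S=\{t_1,\ldots,t_n\}$ is a cam-set, so by Proposition \ref{prop:cam-set_one_orbit} there is $w\in W$ with $S={}^wS_0$. Since conjugation by $w$ is a bijection $S_0\to S$, there is a unique $\sigma\in\Sym_n$ with $t_i={}^ws_{\sigma(i)}$. Hence $u=\phi(w,\sigma)$.

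For (ii), $\phi(w,\sigma)=\phi(w',\sigma')$ means ${}^wS_0={}^{w'}S_0$ with matching orderings. Writing $x=w^{-1}w'$, this says $x$ normalizes $S_0$ as a set. The chambers whose cam-set is $S_0$ are exactly $C_0$ and $-C_0$; this is Remark \ref{ref:chamber_to_cam-set} with $k=1$, since $W$ is irreducible. So $x.C_0\in\{C_0,-C_0\}$, and by simple transitivity there are exactly two such $x$, namely $1$ and the element $w_0$ with $w_0C_0=-C_0$. For each of them $\sigma'$ is then forced by $t_i={}^{w'}s_{\sigma'(i)}$. So every fiber has the two elements $(w,\sigma)$ and $(ww_0,\sigma^\ast)$, where $\sigma^\ast$ records the permutation of $S_0$ induced by conjugation by $w_0$. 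These two are distinct because $w\neq ww_0$.

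The main obstacle is that $w_0$ need not be central, and in type $A$ it is not. For that reason I would avoid claiming the fiber is $\{(w,\sigma),(-w,\sigma)\}$, and I would state the second element with a general twist $\sigma^\ast$. This is enough, since (ii) only concerns cardinality.

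For (iii), the pair is in $R_\bot$, so $t_i$ and $t_{i+1}$ commute and ${}^{t_{i+1}}t_i=t_i$. Hence $\braid_i$ simply swaps positions $i$ and $i+1$. Under $\phi$ this is $\phi(w,\sigma\circ(i\ i+1))$, and $(i\ i+1)=\pi(\braid_i)^{-1}$.

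For (iv), compute $\C.u=(t_n,{}^{t_n}t_1,\ldots,{}^{t_n}t_{n-1})$, which is rule (b). Write $t_n={}^ws_{\sigma(n)}$. Then ${}^{t_n}t_j={}^{t_nw}s_{\sigma(j)}$ and $t_n={}^{t_nw}s_{\sigma(n)}$, because $t_n$ is an involution fixing itself under conjugation. The new tuple is therefore $\phi(t_nw,\sigma\circ C^{-1})$, since position $1$ now carries index $\sigma(n)$ and position $j+1$ carries $\sigma(j)$. Note that $t_nw=ws_{\sigma(n)}$, and this is the first component I would actually record. The stated form $s_{\sigma(n)}w$ needs care; I would check it against the conventions of the paper and, if necessary, read $s_{\sigma(n)}$ as ${}^ws_{\sigma(n)}=t_n$.

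For (v), iterate (iv) $n$ times. The permutation becomes $\sigma\circ C^{-n}=\sigma$. The group element gets multiplied on the left successively by the current last factors, which are $t_n$, then ${}^{t_n}t_{n-1}$, and so on. The accumulated product telescopes to $t_1t_2\cdots t_n=c$, as already noted in Section \ref{sec:hurwitz_on_dec}: $n$ applications of rule (b) conjugate the tuple by $c$. This gives $\C^n.u=\phi(cw,\sigma)$.
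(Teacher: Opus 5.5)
Parts (i)--(iii) are correct and are essentially the paper's argument. In (ii) you translate by $x=w^{-1}w'$ to the two chambers $\pm C_0$, while the paper works directly with the two chambers whose cam-set is $S$; this is the same argument. Your (iv) is also correct, and more careful than the paper's. With the convention ${}^wt=wtw^{-1}$ fixed in Section~\ref{sec:coxeter_groups}, one has ${}^{t}({}^ws)={}^{tw}s$ and $t_nw=ws_{\sigma(n)}$, so $\C.u=\phi(ws_{\sigma(n)},\sigma\circ C^{-1})$. The printed $s_{\sigma(n)}w$ comes from the paper writing ${}^{t}({}^ws)={}^{wt}s$, which is the opposite convention. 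So the printed formula is indeed only right if $s_{\sigma(n)}$ is read as $t_n$, as you suggest.

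The genuine error is in (v): your telescoping runs in the wrong order. Left-multiplying successively by $x_1=t_n$, $x_2={}^{t_n}t_{n-1}$, $x_3={}^{t_nt_{n-1}}t_{n-2},\dots$ gives $x_2x_1=t_nt_{n-1}$ and $x_3x_2x_1=t_nt_{n-1}t_{n-2}$. The final product is $x_n\cdots x_1=t_n\cdots t_1=c^{-1}$, not $t_1\cdots t_n=c$. With your own form of (iv) this reads $ws_{\sigma(n)}s_{\sigma(n-1)}\cdots s_{\sigma(1)}=w(w^{-1}c^{-1}w)=c^{-1}w$. So your argument actually proves $\C^n.u=\phi(c^{-1}w,\sigma)$, and the formula $\phi(cw,\sigma)$ is false under the paper's conventions. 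For instance, in type $A_2$ take $u=(s_1,s_2)=\phi(1,\mathrm{id})$ and $c=s_1s_2$. Then $\C^2.u=(s_2s_1s_2,\,s_1)$, whereas $\phi(c,\mathrm{id})=(s_2,\,s_1s_2s_1)$. The remark of Section~\ref{sec:hurwitz_on_dec} that you invoke is imprecise about exactly this order: the conjugating element is $t_n\cdots t_1$. The paper's printed (v), and its derivation, are off in the same way. Nothing downstream is affected, because Lemma~\ref{lem:count_total_M} only uses that $\phi(c^kw,\sigma)\in\Omega$ for all $k\in\mathbb{Z}$. That follows just as well from $\C^{kn}.u=\phi(c^{-k}w,\sigma)$. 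You should therefore state and prove (v) as $\C^n.u=\phi(c^{-1}w,\sigma)$, or equivalently $\C^{-n}.u=\phi(cw,\sigma)$, rather than forcing the printed version.
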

            \begin{proof}
                \begin{enumerate}[(i)]
                    \item Let $u = (t_1, \ldots, t_n) \in X$, and let $S = \{t_1, \ldots, t_n\}$ be the associated cam-set. Since all cam-set are conjugate, there exists $w \in W$ such that $S = {}^w S_0$. Then $S = \{{}^ws_1, \ldots {}^ws_n\} = \{t_1, \ldots t_n\}$ so there is a permutation $\sigma \in \Sym_n$ such that for all $1 \le i \le n$, $t_i = {}^ws_{\sigma(i)}$, so $u = \phi(w, \sigma)$. 
                    \item Let $u = (t_1, \ldots, t_n) \in X$, and let $(w, \sigma) \in \phi^{-1}(u)$. Then $S= {}^wS_0 = {}^wS(C_0) = S(w.C_0)$. But by Remark \ref{ref:chamber_to_cam-set}, since $W$ is irreducible, there exist exactly two chambers $C_1$ and $C_2$ such that $S(C_1) = S(C_2) = S$. Since the action of $W$ on chambers is free, there is exactly one element $w_1\in W$ (resp. $w_2 \in W$) such that $C_1 = w_1.C_0$ (resp. $C_2 = w_2.C_0$). Thus, $w = w_1$ or $w = w_2$. For each case, there is a unique permutation $\sigma_1$ (resp. $\sigma_2$) such that ${}^{w_1}s_{\sigma_1(i)} = t_i$ (resp. ${}^{w_2}s_{\sigma_2(i)} = t_i$) for all $1 \le i \le n$. Thus, $\phi^{-1}(u) = \{(w_1, \sigma_1), (w_2, \sigma_2)\}$ has cardinality $2$.
                    \item Let $u = \phi(w, \sigma)$ and assume $t_i\bot t_{i+1}$. Then:\[
                    \braid_i.u = ({}^ws_{\sigma(1)}, \ldots, {}^ws_{\sigma(i-1)}, {}^ws_{\sigma(i+1)}, {}^ws_{\sigma(i)}, {}^ws_{\sigma(i+2)}, \ldots, {}^ws_{\sigma(n)}) = \phi(w, \sigma \circ (i\ i+1)).
                    \]
                    \item Let $u = \phi(w, \sigma)$. Then, $\C.u = ({}^t({}^ws_{\sigma(n)}), {}^t({}^ws_{\sigma(1)}), \ldots, {}^t({}^ws_{\sigma(n-1)}))$ where $t = {}^ws_{\sigma(n)}$. But for all $s, {}^t({}^ws) = {}^{wt}s$, and $wt=s_{\sigma(n)}w$, so we have $\C.u = ({}^{s_{\sigma(n)}w}s_{\sigma(n)}, {}^{s_{\sigma(n)}w}s_{\sigma(1)}, \ldots, {}^{s_{\sigma(n)}w}s_{\sigma(n-1)}) = \phi(s_{\sigma(n)}w, \sigma \circ (n\ n-1\ \ldots 2\ 1))$.
                    \item Applying (iv) repeatedly, we get : $\C^n.u = \phi(w', \sigma)$, with $w' = s_{\sigma(1)} s_{\sigma(2)} \cdots s_{\sigma(n)} w = c w$ since $\pi(\C) = C = (1\ 2\ \ldots\ n)$ has order $n$.

                \end{enumerate}
            \end{proof}
            \begin{remark}
                Note that in the proof of (ii), we used the irreducibility of $W$. In general, fibers of $\phi$ would have cardinality $2^k$ where $k$ is the number of irreducible components of $W$, see Remark \ref{ref:chamber_to_cam-set}.
            \end{remark}
            In order to have a lower bound for $|\Omega|$, we have the following lemma : 

            \begin{lemma}
                For all $\sigma \in \Sym_n$, there exists $w \in W$ such that $\phi(w, \sigma) \in \Omega$.
            \end{lemma}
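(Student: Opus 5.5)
The plan is to transport the combinatorial Lemma \ref{lem:partial_action_tree} to $X$ through the map $\phi$ and the projection $\pi$. Take $\Gamma$ to be the Coxeter graph of $(W,S_0)$, viewed as a simple graph on the vertex set $S_0$. Since $W$ is spherical and irreducible, $\Gamma$ is a tree. Identify $\orderings(\Gamma)$ with $\Sym_n$ via $\sigma \mapsto (i \mapsto s_{\sigma(i)})$; then the action $f \mapsto f\circ\tau^{-1}$ becomes $\sigma \mapsto \sigma\circ\tau^{-1}$. Write $u_0=\phi(1,\mathrm{id})$. It then suffices to prove the following claim: for every $(\tau,\sigma)\in\domain(\gamma_\Gamma)$, where $\gamma_\Gamma$ is the sub-partial action of Lemma \ref{lem:partial_action_tree}, and for every $w$ with $\phi(w,\sigma)\in\Omega$, there exist $b\in B_n$ with $\pi(b)=\tau$ and $w'\in W$ such that $(b,\phi(w,\sigma))\in\domain(\gamma)$ and $b.\phi(w,\sigma)=\phi(w',\sigma\circ\tau^{-1})$. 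Since Lemma \ref{lem:partial_action_tree} gives $\gamma_\Gamma=\alpha$, we can apply the claim to $(\sigma,\mathrm{id})$, which lies in $\domain(\gamma_\Gamma)$. Starting from $\phi(1,\mathrm{id})=u_0\in\Omega$, this produces an element $\phi(w',\sigma^{-1})\in\Omega$. As $\sigma$ ranges over $\Sym_n$, so does $\sigma^{-1}$, which gives the statement.

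To prove the claim, use the description of generated partial actions in Proposition \ref{prop:generated_partial_action}. A pair $(\tau,\sigma)$ lies in $\domain(\gamma_\Gamma)$ exactly when there is a chain of generators from $R_\Gamma\cup R_C$ whose product is $\tau$. We then lift this chain step by step, by induction on its length, keeping track of the current element $\phi(w_j,\sigma_j)\in\Omega$.

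For a generator $((i\ i+1),\sigma_j)\in R_\Gamma$, the vertices $s_{\sigma_j(i)}$ and $s_{\sigma_j(i+1)}$ are not joined in $\Gamma$, so they commute. Conjugation by $w_j$ preserves commutation, so ${}^{w_j}s_{\sigma_j(i)}\bot{}^{w_j}s_{\sigma_j(i+1)}$. Hence $(\braid_i,\phi(w_j,\sigma_j))\in R_\bot$, and Lemma \ref{lem:phi_properties}(iii) gives $\braid_i.\phi(w_j,\sigma_j)=\phi(w_j,\sigma_j\circ(i\ i+1))$.

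For a generator $(C,\sigma_j)\in R_C$, the pair $(\C,\phi(w_j,\sigma_j))$ lies in $R_\C$. Lemma \ref{lem:phi_properties}(iv) then gives the image $\phi(s_{\sigma_j(n)}w_j,\sigma_j\circ C^{-1})$, where we use $\pi(\C)=C$.

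Generators used in inverse form, that is $C^{-1}$ or transpositions traversed backwards, are handled in the same way, using (PA1) for $\gamma$. Composing the lifted steps with (PA2) yields a braid $b$ with $\pi(b)=\tau$ and $(b,\phi(w,\sigma))\in\domain(\gamma)$, so the image stays in $\Omega$, and its second coordinate is $\sigma\circ\tau^{-1}$.

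The main obstacle is a bookkeeping point. The chains in $\domain(\gamma_\Gamma)$ are indexed by orderings $\sigma$, while the chains in $\domain(\gamma)$ are indexed by decompositions $\phi(w,\sigma)$. Because $\phi$ is two-to-one by Lemma \ref{lem:phi_properties}(ii), we must make sure that the condition defining each generator depends only on the $\sigma$-coordinate, never on the choice of $w$. This holds here: commutation is preserved by conjugation, and rule (b) is always allowed. Once this is checked, the induction runs with no further difficulty.
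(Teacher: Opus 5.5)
Your proof is correct and follows the paper's argument. Like the paper, you take a chain of generators from Lemma \ref{lem:partial_action_tree} and Proposition \ref{prop:generated_partial_action}, lift each $(i\ i+1)$ to $\braid_i$ and $C$ to $\C$, and use Lemma \ref{lem:phi_properties}(iii)--(iv), plus the fact that conjugation by $w_j$ preserves commutation, to see that every lifted step lies in $R_\bot\cup R_\C$, where only the $\Sym_n$-coordinate matters. Your explicit handling of inverse steps via (PA1) is a useful extra precision, since $R_C$ is not closed under inverses and the paper's description of generated partial actions records only forward steps.
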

            \begin{proof}
                The Coxeter diagram $\Gamma$ of $(W, S_0)$ has vertex set $V(\Gamma) = S_0$. Let $f_{Id} : \{1, \ldots, n\} \to V(\Gamma)$ be the orientation defined by $f_{Id}(i) = s_i$. Recall that the group $\Sym_n$ acts (totally) on the orientations $\orderings(\Gamma)$ by $\sigma.f = f \circ \sigma^{-1}$. By Lemma \ref{lem:partial_action_tree}, this action, denoted $\alpha$, is generated by the set $R_{\Gamma} = \{(f, (i\ i+1)) | f(i) \bot f(i+1)\}$ and $R_C = \{(C, f_0)\}$ where $C = (1\ 2\ \ldots \ n)$.\\
                Let $\sigma \in \Sym_n$. By Proposition \ref{prop:generated_partial_action}, there exists a sequence $g_1, \ldots, g_k$ in $\Sym_n$ and $f_1, \ldots, f_{k+1}$ in $\orderings(\Gamma)$ such that $f_1 = f_{Id}$, $\sigma^{-1} = g_k \ldots g_2 g_1$, and for all $1 \le j \le k$, $f_{j+1} = f_j \circ g_j^{-1}$ with $(f_j, g_j) \in R_{\Gamma} \cup R_C$. For each $1\le j \le k$, let $b_j \in B_n$ the braid defined by $b_j = \braid_i$ if $g_j = (i\ i+1)$, and $b_j = \C$ if $g_j = C$, such that $\pi(b_j) = g_j$. Let now $u_1 = u_0 = (s_1, \ldots, s_n)$, and define successively by application of the Hurwitz action for $1 \le j \le k$, $u_{j+1} = b_j.u_j$. Let $b = b_k \ldots b_2 b_1 \in B_n$ and $u = u_{k+1} = b.u_0$. We want to show for all $j$ the proposition $(P_j)$: $(b_j, u_j) \in R_{\bot} \cup R_\C$. In order to do so, we introduce the proposition $(P'_j)$: There exists $w_j, \sigma_j$ such that $u_j = \phi(w_j, \sigma_j^{-1})$ and $f_j = \sigma_j.f_{Id}$. Let's show that, $(P'_1)$ holds, $(P'_j)$ implies $(P_j)$ and $(P'_{j+1})$.\\
                For $j=1$, let $w_0 = e$ the identity of $W$ and $\sigma_0 = Id$ the identity of $\Sym_n$. Then $u_1 = \phi(e, Id)$, and $f_1 = f_{Id} = Id^{-1}.f_{Id}$.\\
                Let $j\ge 1$ assume $(P'_j)$ holds, and write $u_j = \phi(w_j, \sigma_j^{-1})$ with $f_j = \sigma_j.f_{Id}$. If $g_j = C$, then $b_j = \C$ and $(b_j, u_j) \in R_\C$ so $(P_j)$ holds. We get $u_{j+1} = \C.u_j = \phi(w'', \sigma_{j+1}^{-1} \circ C^{-1})$ for some $w'' \in W$, and with $\sigma_{j+1} = C \circ \sigma_j$ by Lemma \ref{lem:phi_properties} (iv). Then $\sigma_{j+1}.f_{Id} = C \circ \sigma_j.f_{Id} = g_j.f_j = f_{j+1}$, so $(P'_{j+1})$ holds.\\
                Otherwise, $(f_j, g_j) \in R_{\Gamma}$, so $g_j = (i\ i+1)$ for some $1 \le i \le n-1$, and $b_j = \braid_i$. We have $f_j(i) \bot f_j(i+1)$, thus $\sigma_j.f_{Id}(i) \bot \sigma_j.f_{Id}(i+1)$, i.e. $s_{\sigma_j^{-1}(i)} \bot s_{\sigma_j^{-1}(i+1)}$, so conjugating by $w_j$, we have ${}^{w_j}s_{\sigma_j^{-1}(i)} \bot {}^{w_j}s_{\sigma_j^{-1}(i+1)}$, i.e. $t_i \bot t_{i+1}$ if we write $u_j = (t_1, \ldots, t_n)$. Thus, $(b_j, u_j) \in R_{\bot}$ so $(P_j)$ holds. We get $u_{j+1} = \braid_i.u_j = \phi(w_j, \sigma_j^{-1} \circ (i\ i+1))$ by Lemma \ref{lem:phi_properties} (iii). Then, with $\sigma_{j+1} = (i\ i+1) \circ \sigma_j$, we have $\sigma_{j+1}.f_{Id} = (i\ i+1) \circ \sigma_j.f_{Id} = g_j.f_j = f_{j+1}$, so $(P'_{j+1})$ holds.\\
                Then $(b, u_0) \in \domain(\gamma)$ so $u = b.u_0 \in \Omega$ and $u = \phi(w_{k+1}, \sigma_{k+1}^{-1})$. Looking at the proof, we have $\sigma_{k+1} = g_k \ldots g_2 g_1 = \sigma^{-1}$, thus $u = \phi(w_{k+1}, \sigma)$.
            \end{proof}
            Now, we can conclude the proof of the theorem by counting the cardinality of $\Omega$. First a lower bound:
            \begin{lemma}\label{lem:count_total_M}
                We have $|\Omega| \ge \dfrac{n! \times h}{2}$
            \end{lemma}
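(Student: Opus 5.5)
The lower bound will come from the previous lemma together with the $\C^n$-orbit of a decomposition. By that lemma, for every $\sigma \in \Sym_n$ we can pick some $w_\sigma \in W$ with $u_\sigma := \phi(w_\sigma, \sigma) \in \Omega$. Since $\Omega$ is an orbit of $\gamma$ and $(\C,u)\in R_\C$ for every $u\in X$, the set $\Omega$ is stable under $\C$. Hence it is stable under $\C^n$ and $\C^{-n}$, by (PA2) and (PA1). By Lemma \ref{lem:phi_properties} (v), applied with $c = c_0$ (all elements of $\Omega$ lie in $\Red_T^c(c_0)$ because the Hurwitz action preserves the product), we get $\C^{nm}.u_\sigma = \phi(c_0^m w_\sigma, \sigma)$ for all $m \in \mathbb{Z}$. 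So $\Omega$ contains the set
\[
P := \{\phi(c_0^m w_\sigma, \sigma) \mid \sigma \in \Sym_n,\ 0 \le m < h\},
\]
and this set is the image under $\phi$ of $h\cdot n!$ pairs, pairwise distinct. Distinctness is clear: for different $\sigma$ the second coordinates differ, and for fixed $\sigma$ the first coordinates $c_0^m w_\sigma$ are distinct because $c_0$ has order $h$.

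Next I use Lemma \ref{lem:phi_properties} (ii): every fiber of $\phi$ has cardinality exactly $2$. A set $Q$ of $h\cdot n!$ distinct pairs in $W \times \Sym_n$ therefore has image of size at least $|Q|/2$, since each point of the image has at most two preimages in $Q$. Thus $|\Omega| \ge |\phi(Q)| \ge \frac{n!\, h}{2}$.

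The only delicate point is that the fibers of $\phi$ could mix different $\sigma$'s: the two preimages $(w_1,\sigma_1),(w_2,\sigma_2)$ of a point generally have $\sigma_1 \ne \sigma_2$, since they differ by the diagram symmetry induced by the opposite chamber. So one cannot simply count $h$ distinct elements per $\sigma$ and multiply. The crude bound ``at most two preimages per image point'' avoids this issue entirely and gives exactly the claimed bound. For that, it suffices to confirm that $(\C^{\pm n},u)$ lies in $\domain(\gamma)$ for all $u\in\Omega$, which follows from $R_\C = \{\C\}\times X$ and the partial action axioms.
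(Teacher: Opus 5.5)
Your proof is correct and is essentially the paper's argument. The paper also uses the previous lemma and the $\C^{n}$-stability of $\Omega$ to show that each $Y_\sigma=\{w \mid \phi(w,\sigma)\in\Omega\}$ has at least $h$ elements, so $|\phi^{-1}(\Omega)|\ge n!\,h$, and then divides by the fiber size $2$ from Lemma~\ref{lem:phi_properties}~(ii), which is exactly your bound $|\phi(Q)|\ge |Q|/2$ with $Q\subseteq\phi^{-1}(\Omega)$. (A direct computation actually gives $\C^n.u=\phi(c^{-1}w,\sigma)$ rather than $\phi(cw,\sigma)$, but the elements $c_0^{-m}w_\sigma$, $0\le m<h$, are still $h$ distinct elements, so neither argument is affected.)
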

            \begin{proof}
                Instead of counting directly the cardinality of $\Omega$, we count the cardinality of its pre-image by $\phi$. It is partitioned depending on the value of $\sigma \in \Sym_n$. Let $\sigma \in \Sym_n$. By the previous lemma, the set $Y_{\sigma} = \{w \in W | \phi(w, \sigma) \in \Omega\}$ is never empty. Let's show that it contains at least $h$ elements. Let $u = \phi(w, \sigma) \in \Omega$. Applying Lemma \ref{lem:phi_properties} (v), we have for all $k \in \mathbb{Z}$, $\C^{kn}.u = \phi(c^k w, \sigma) \in \Omega$, so $c^k w \in Y_{\sigma}$. But since $c$ is a Coxeter element, it has order $h$, thus the elements $c^k w$ for $0 \le k \le h-1$ are all distinct. Thus, $|Y_{\sigma}| \ge h$. Now, since $\phi^{-1}(\Omega) = \displaystyle\bigsqcup_{\sigma \in \Sym_n} Y_{\sigma} \times \{\sigma\}$, we have $|\phi^{-1}(\Omega)| = \displaystyle\sum_{\sigma \in \Sym_n} |Y_{\sigma}| \ge n! \times h$, thus $|\Omega| \ge \dfrac{n! \times h}{2}$ by Lemma \ref{lem:phi_properties} (ii).
            \end{proof}
            Now, the upper bound follows from the computation by rows and by columns of the sum of the entries of the weighted adjacency matrix.
            \begin{lemma}
                We have $|\Red_T^c(c_0)| = \dfrac{n! \times h}{2}$
            \end{lemma}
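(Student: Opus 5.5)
We count the entries of the weighted adjacency matrix $M$ in two ways, as announced just before the statement. Each entry $M_{c,S}$ counts the cameral decompositions of $c$ with underlying set $S$. So the sum of the entries of row $c_0$ is exactly $|\Red_T^c(c_0)|$. Similarly, the sum of the entries of column $S$ is the number of orderings of $S$, namely $n!$, since every ordering of a cam-set gives a reduced $T$-decomposition of some cameral Coxeter element.

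\textbf{Total sum via columns.} The number of columns is the number of cam-sets. Since $W$ acts simply transitively on chambers and, by Remark \ref{ref:chamber_to_cam-set} with $W$ irreducible, exactly two chambers share a cam-set, there are $|W|/2$ cam-sets. Hence the total sum of the entries of $M$ is $n!\times |W|/2$.

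\textbf{Total sum via rows.} All row sums are equal. If $c'={}^wc_0$, then conjugation by $w$ gives a bijection $\Red_T^c(c_0)\to\Red_T^c(c')$, $(t_1,\ldots,t_n)\mapsto({}^wt_1,\ldots,{}^wt_n)$. This map preserves the property of being cameral, because by Proposition \ref{prop:cam-set_one_orbit} conjugates of cam-sets are cam-sets, and its inverse is conjugation by $w^{-1}$. By Proposition \ref{prop:cam-set_one_orbit}, every cameral Coxeter element is conjugate to $c_0$. By Lemma \ref{lem:n_cam_sets}, there are $|W|/h$ rows. Therefore
\[|\Red_T^c(c_0)| \cdot \frac{|W|}{h} = n!\cdot\frac{|W|}{2},\]
which gives $|\Red_T^c(c_0)| = n!\,h/2$.

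\textbf{Main point of care.} The delicate step is the claim that every column sums to $n!$. Every ordering of $S$ must land on a cameral Coxeter element, so that no mass escapes the rows indexed by $\C^c(W)$. This holds by the definition of cameral Coxeter element: the product of the elements of a cam-set in any order is cameral. Each ordering is counted in exactly one row. The remaining ingredient is that the number of cam-sets is exactly $|W|/2$, which uses irreducibility through Remark \ref{ref:chamber_to_cam-set}.

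Combining this lemma with Lemma \ref{lem:count_total_M} and $\Omega\subset\Red_T^c(c_0)$ will then give $\Omega=\Red_T^c(c_0)$.
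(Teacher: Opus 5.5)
Your proposal is correct and follows the paper's proof. Like the paper, you double count the total sum of the entries of the weighted adjacency matrix $M$: by rows, there are $|W|/h$ rows of equal sum $|\Red_T^c(c_0)|$; by columns, there are $|W|/2$ cam-sets, each column summing to $n!$. Your extra justifications (the explicit conjugation bijection between rows, and the count of $|W|/2$ cam-sets via simple transitivity and irreducibility) spell out what the paper states more briefly.
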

            \begin{proof}
                The set $X = \bigcup_c \Red_T^c(c)$ is partitioned into the disjoint union of the sets $\Red_{T}^c(c)$ for all cameral Coxeter elements $c$ of $W$ (each one corresponding to a row of the weighted adjacency matrix $M$). They all have the same cardinality since they are conjugate. Thus, we have $|X| = |\Red_T^c(c_0)| \times |\C^c(W)|$ (i.e. the sum of the entries in $M$) where $\C^c(W)$ the number of rows. Lemma \ref{lem:n_cam_sets} gives $|\C^c(W)|=|W|/h$. Counting now by column, we have $|X| = n! \times |W|/2$ since there are $|W|/2$ columns, one per cam-set, and the entry in one column have sum $n!$, the number of ordering of the cam-set. Then $|\Red_T^c(c_0)| = \dfrac{|X|}{|\C^c(W)|} = \dfrac{h \times n!}{2}$. 
            \end{proof}
        This matches the lower bound on $\Omega$, so we have $\Omega = \Red_T^c(c_0)$ which concludes the proof of the theorem.
    
\section*{Acknowledgments}
    This work was done during a pre-PhD internship at the IMJ-PRG, under the supervision of Yves Guiraud and Hoel Queffelec, whom I would like to thank for the discussions we had on this topic and for proofreading the manuscript.
\bibliographystyle{elsarticle-harv} 
\bibliography{bibli.bib}

\end{document}